 \newtheorem{theorem}{Theorem}[section]
 \newtheorem{corollary}[theorem]{Corollary}
 \newtheorem{lemma}[theorem]{Lemma}
 \numberwithin{equation}{section}
\newcommand{\cA}{\mathcal{A}}
\newcommand{\cB}{\mathcal{B}}
\newcommand{\cE}{\mathcal{E}}
\newcommand{\cF}{\mathcal{F}}
\newcommand{\cI}{\mathcal{I}}
\newcommand{\cJ}{\mathcal{J}}
\newcommand{\cK}{\mathcal{K}}
\newcommand{\cM}{\mathcal{M}}
\newcommand{\cZ}{\mathcal{Z}}
\newcommand{\mC}{\mathbb{C}}
\newcommand{\mN}{\mathbb{N}}
\newcommand{\mR}{\mathbb{R}}
\newcommand{\mZ}{\mathbb{Z}}
\newcommand{\fa}{\mathfrak{a}}
\newcommand{\fb}{\mathfrak{b}}
\newcommand{\fc}{\mathfrak{c}}
\newcommand{\ff}{\mathfrak{f}}
\newcommand{\fr}{\mathfrak{r}}
\newcommand{\fs}{\mathfrak{s}}
\newcommand{\fA}{\mathfrak{A}}
\newcommand{\fJ}{\mathfrak{J}}
\newcommand{\fS}{\mathfrak{S}}
\newcommand{\fZ}{\mathfrak{Z}}
\newcommand{\alg}{{\rm alg}}
\newcommand{\id}{{\rm id}}
\newcommand{\Co}{{\rm Co}}
\newcommand{\Op}{{\rm Op}}
\begin{document}
\title[Semi-Fredholmness of singular integral operators with shifts]{%
Semi-Fredholmness of Weighted Singular\\
Integral Operators with Shifts and Slowly\\
Oscillating Data}

\author{Alexei Yu. Karlovich}

\address{%
Centro de Matem\'atica e Aplica\c{c}\~oes,\\
Departamento de Matem\'a\-tica, \\
Faculdade de Ci\^encias e Tecnologia,\\
Universidade Nova de Lisboa,\\
Quinta da Torre, \\
2829--516 Caparica, Portugal}
\email{oyk@fct.unl.pt}

\author{Yuri I. Karlovich}

\address{%
Centro de Investigaci\'on en Ciencias,\\
Instituto de Investigaci\'on en Ciencias B\'asicas y Aplicadas,\\
Universidad Aut\'onoma del Estado de Morelos,\\
Av. Universidad 1001, Col. Chamilpa,\\
C.P. 62209 Cuernavaca, Morelos, M\'exico}
\email{karlovich@uaem.mx}

\author{Amarino B. Lebre}
\address{%
Centro de An\'alise Funcional, Estruturas Lineares e Aplica\c{c}\~oes,\\
Departamento de Matem\'atica, \\
Instituto Superior T\'ecnico,\\
Universidade de Lisboa,\\
Av. Rovisco Pais, \\
1049--001 Lisboa, Portugal}
\email{alebre@math.tecnico.ulisboa.pt}

\thanks{%
This work was partially supported by the Funda\c{c}\~ao para a Ci\^encia e a
Tecnologia (Portu\-guese Foundation for Science and Technology)
through the projects
UID/MAT/00297/2013 (Centro de Matem\'atica e Aplica\c{c}\~oes)
and
UID/MAT/04721/2013 (Centro de An\'alise Funcional, Estruturas Lineares e
Aplica\c{c}\~oes).
The second author was also supported by the SEP-CONACYT Project
No. 168104 (M\'exico).}

\begin{abstract}
Let $\alpha,\beta$ be orientation-preserving homeomorphisms of $[0,\infty]$
onto itself, which have only two fixed points at $0$ and $\infty$, and whose
restrictions to $\mR_+=(0,\infty)$ are diffeomorphisms, and let
$U_\alpha,U_\beta$ be the corresponding isometric shift operators on the
space $L^p(\mR_+)$ given by $U_\mu f=(\mu')^{1/p}(f\circ\mu)$ for
$\mu\in\{\alpha,\beta\}$. We prove sufficient conditions for the right and
left Fredholmness on $L^p(\mR_+)$ of singular integral operators of the
form $A_+P_\gamma^++A_-P_\gamma^-$, where $P_\gamma^\pm=(I\pm S_\gamma)/2$,
$S_\gamma$ is a weighted Cauchy singular integral operator,
$A_+=\sum_{k\in\mZ}a_kU_\alpha^k$ and $A_-=\sum_{k\in\mZ}b_kU_\beta^k$
are operators in the Wiener algebras of functional operators with shifts.
We assume that the coefficients $a_k,b_k$ for $k\in\mZ$ and the derivatives
of the shifts $\alpha',\beta'$ are bounded continuous functions on
$\mR_+$ which may have slowly oscillating discontinuities at $0$ and $\infty$.
\end{abstract}

\keywords{Right Fredholmness, left Fredholmness,
slowly oscillating shift,
Wiener algebra of functional operators,
weighted singular integral operator,
Mellin pseudodifferential operator.}

\subjclass{45E05, 47A53, 47G10, 47G30.}
\maketitle

\section{Introduction}
Let $\cB(X)$ denote the Banach algebra of all bounded linear operators
acting on a Banach space $X$. Recall that an operator $A\in\cB(X)$ is said
to be left invertible (resp. right invertible) if there
exists an operator $B\in\cB(X)$ such that $BA=I$ (resp. $AB=I$) where
$I\in\cB(X)$ is the identity operator on $X$. The operator $B$ is called a
left (resp. right) inverse of $A$. An operator $A\in\cB(X)$ is said to be
invertible if it is left invertible and right invertible simultaneously.
We say that $A$ is strictly left (resp. right) invertible if it is left
(resp. right) invertible, but not invertible. If the operator $A$ is
invertible only from one side, then the corresponding inverse is not uniquely
defined. We refer to \cite[Section~2.5]{GK92} for further properties of
one-sided invertible operators acting on Banach spaces.

Let $\cK(X)$ be the closed two-sided ideal of all compact operators in
$\cB(X)$, and let $\cB^\pi(X):=\cB(X)/\cK(X)$ be the Calkin algebra of
the cosets $A^\pi:=A+\cK(X)$ where $A\in\cB(X)$. Following
\cite[Chap.~XI, Definition~2.3]{C90}, an operator $A\in\cB(X)$ is said to
be left Fredholm (resp., right Fredholm) if the coset $A^\pi$ is left
invertible (resp., right invertible) in the Calkin algebra $\cB^\pi(X)$. An
operator $A\in\cB(X)$ is said to be semi-Fredholm if it is left or right
Fredholm. We will write $A\simeq B$ if $A-B\in\cK(X)$.

Let $C_b(\mR_+)$ denote the $C^*$-algebra of all bounded continuous
functions on the positive half-line $\mR_+:=(0,+\infty)$. Following Sarason
\cite[p.~820]{S77}, a function $f\in C_b(\mR_+)$ is called
slowly oscillating (at $0$  and $\infty$) if
\[
\lim_{r\to s}\sup_{t,\tau\in[r,2r]}|f(t)-f(\tau)|=0
\quad\mbox{for}\quad
s\in\{0,\infty\}.
\]
The set $SO(\mR_+)$ of all slowly oscillating functions is
a $C^*$-algebra. This algebra properly contains $C(\overline{\mR}_+)$,
the $C^*$-algebra of all continuous functions on the two-point
compactification $\overline{\mR}_+:=[0,+\infty]$ of $\mR_+$.

Suppose $\alpha$ is an orientation-preserving homeomorphism of
$\overline{\mR}_+$ onto itself, which has only two fixed points $0$ and
$\infty$, and whose restriction to $\mR_+$ is a diffeomorphism. We say
that $\alpha$ is a slowly oscillating shift if $\log\alpha'$ is bounded
and $\alpha'\in SO(\mR_+)$. The set of all slowly oscillating shifts is
denoted by $SOS(\mR_+)$. By \cite[Lemma~2.2]{KKL11a}, an
orientation-preserving diffeomorphism $\alpha$ of $\mR_+$ onto itself belongs
to $SOS(\mR_+)$ if and only if $\alpha(t)=te^{\omega (t)}$ for $t\in\mR_+$
and a real-valued function $\omega\in SO(\mR_+)\cap C^1(\mR_+)$
is such that the function $\psi$ given by
$\psi(t):= t\omega^\prime(t)$ also belongs to $SO(\mR_+)$ and
$\inf_{t\in\mR_+}\big(1+t\omega'(t)\big)>0$. The real-valued slowly
oscillating function
\[
\omega(t):=\log[\alpha(t)/t],
\quad
t\in\mR_+,
\]
is called the exponent function of $\alpha\in SOS(\mR_+)$.

Through the paper, we will suppose that $1<p<\infty$ and will use the
following notation:
\[
\cB:=\cB(L^p(\mR_+)),
\quad
\cK:=\cK(L^p(\mR_+)).
\]
It is easily seen that if $\alpha\in SOS(\mR_+)$, then the weighted shift
operator defined by
\[
U_\alpha f:=(\alpha')^{1/p}(f\circ\alpha)
\]
is  an isometric isomorphism of the Lebesgue space $L^p(\mR_+)$
onto itself. It is clear that $U_\alpha^{-1}=U_{\alpha_{-1}}$,
where $\alpha_{-1}$ is the inverse function to $\alpha$.
For $k\in\mN$, we denote by $U_\alpha^{-k}$ the
operator $(U_\alpha^{-1})^k$. Let $W_{\alpha,p}^{SO}$ denote the
collection of all operators of the form
\begin{equation}\label{eq:operator-series}
A=\sum_{k\in\mZ}a_k U_\alpha^k
\end{equation}
where $a_k\in SO(\mR_+)$ for all $k\in\mZ$ and
\begin{equation}\label{eq:Wiener-norm}
\|A\|_{W_{\alpha,p}^{SO}}:=\sum_{k\in\mZ}\|a_k\|_{C_b(\mR_+)}<+\infty.
\end{equation}
The set $W_{\alpha,p}^{SO}$ is, actually, a Banach algebra with respect
to the usual operations and the norm \eqref{eq:Wiener-norm}. By analogy with
the Wiener algebra of absolutely convergent Fourier series, we will call
$W_{\alpha,p}^{SO}$ the Wiener algebra.

Let $\Re\gamma$ and $\Im\gamma$ denote the real and imaginary part
of $\gamma\in\mC$, respectively. If $\gamma\in\mC$ satisfies
\begin{equation}\label{eq:gamma-condition}
0<1/p+\Re\gamma<1,
\end{equation}
then the operator
\begin{equation}\label{eq:def-S}
(S_\gamma f)(t):=\frac{1}{\pi i}\int_{\mR_+}
\left(\frac{t}{\tau}\right)^\gamma\frac{f(\tau)}{\tau-t}d\tau,
\end{equation}
where the integral is understood in the principal value sense,
is bounded on the Lebesgue space $L^p(\mR_+)$
(see, e.g., \cite[Proposition~4.2.11]{RSS11}).
Put
\[
P_\gamma^\pm:=(I\pm S_\gamma)/2.
\]

This paper is a continuation of our recent works
\cite{FTK-BJMA,FTK,KKL-JMAA,KKL-JIEA} (see also references therein).
Let $\alpha,\beta$ belong to $SOS(\mR_+)$ and $a_k,b_k\in SO(\mR_+)$
for all $k\in\mZ$. In \cite{KKL-JMAA,KKL-JIEA} we found criteria for the
Fredholmness and a formula permitting to calculate the index of the weighted
singular integral operator of the form
\[
M:=(a_0 I+a_1U_\alpha)P_\gamma^++(b_0 I+b_1U_\beta)P_\gamma^-.
\]
In this paper we assume that
\begin{equation}\label{eq:def-Apm}
A_+:=\sum_{k\in\mZ}a_k U_\alpha^k\in W_{\alpha,p}^{SO},
\quad
A_-:=\sum_{k\in\mZ}b_k U_\beta^k\in W_{\beta,p}^{SO}
\end{equation}
and consider the weighted  singular integral operator of the form
\begin{equation}\label{eq:def-N}
N:=A_+P_\gamma^++A_-P_\gamma^-.
\end{equation}
Criteria for the Fredholmness of the operator $N$ in the
particular case of $\alpha=\beta$ and $\gamma=0$ were obtained in
\cite{FTK}. The proof of the sufficiency portion is based on the
Allan-Douglas local principle and follows ideas of \cite{KKL11a}.
In this paper we will show that the localization technique is flexible enough
to treat also the case of the left and right Fredholmness for arbitrary shifts
$\alpha,\beta$ and arbitrary $\gamma$ satisfying \eqref{eq:gamma-condition},
provided that there are one-sided inverses of $A_+$ and $A_-$
belonging to the Wiener algebras $W_{\alpha,p}^{SO}$ and $W_{\beta,p}^{SO}$,
respectively. We show that the required result on one-sided inverses can be
obtained from \cite{FTK-BJMA}.

By $M(\mathfrak{A})$ we denote the maximal ideal space of a unital
commutative Banach algebra $\mathfrak{A}$. Identifying the points
$t\in\overline{\mR}_+$ with the evaluation functionals $t(f)=f(t)$
for $f\in C(\overline{\mR}_+)$, we get
$M(C(\overline{\mR}_+))=\overline{\mR}_+$. Consider the fibers
\[
M_s(SO(\mR_+)):=\big\{\xi\in M(SO(\mR_+)):\xi|_{C(\overline{\mR}_+)}=s\big\}
\]
of the maximal ideal space $M(SO(\mR_+))$ over the points
$s\in\{0,\infty\}$. By \cite[Proposition~2.1]{K08}, the set
\[
\Delta:=M_0(SO(\mR_+))\cup M_\infty(SO(\mR_+))
\]
coincides with ${\rm clos}_{SO^*}\mR_+\setminus\mR_+$, where
${\rm clos}_{SO^*}\mR_+$ is the weak-star closure of $\mR_+$ in the dual
space of $SO(\mR_+)$. Then $M(SO(\mR_+))=\Delta\cup\mR_+$. In what follows
we write $a(\xi):=\xi(a)$ for every $a\in SO(\mR_+)$ and every
$\xi\in\Delta$.

With the operators $A_\pm$ defined by \eqref{eq:def-Apm}, we associate the
functions $a_\pm$ defined on $\mR_+\times\mR$ by
\begin{equation}\label{eq:def-apm}
a_+(t,x):=\sum_{k\in\mZ}a_k(t)e^{ik\omega(t)x},
\quad
a_-(t,x):=\sum_{k\in\mZ}b_k(t)e^{ik\eta(t)x},
\end{equation}
where $\omega,\eta\in SO(\mR_+)$ are the exponent
functions of $\alpha,\beta$, respectively. Since the series in
\eqref{eq:def-Apm} converge absolutely, we have $a_\pm(\cdot,x)\in SO(\mR_+)$
for all $x\in\mR$. With the operator $N$ we associate the function $n$
defined on $\mR_+\times\mR$ by
\[
n(t,x)=a_+(t,x)p_\gamma^+(x)
+
a_-(t,x)p_\gamma^-(x),
\]
where
\begin{equation}\label{eq:def-p-gamma-pm}
p_\gamma^\pm(x):=(1\pm s_\gamma(x))/2,
\quad
s_\gamma(x):=\coth[\pi(x+i/p+i\gamma)],
\quad
x\in\mR.
\end{equation}
Since $a_\pm(\cdot,x), n(\cdot,x)\in SO(\mR_+)$ for every $x\in\mR$,
taking the Gelfand  transform of $n(\cdot,x)$, we obtain for
$(\xi,x)\in (\Delta\cup\mR_+)\times\mR$,
\begin{equation}\label{eq:def-n}
n(\xi,x):=a_+(\xi,x)p_\gamma^+(x)
+
a_-(\xi,x)p_\gamma^-(x),
\end{equation}
which gives extensions of the functions $n(\cdot,x)$ to $M(SO(\mR_+))$.
\begin{theorem}[Main result]\label{th:one-sided-Fredholmness}
Let $1<p<\infty$ and let $\gamma\in\mC$ satisfy \eqref{eq:gamma-condition}.
Suppose $a_k,b_k\in SO(\mR_+)$ for all $k\in\mZ$ and
$\alpha,\beta\in SOS(\mR_+)$. If
\begin{enumerate}
\item[{\rm (i)}] the functional operators
\[
A_+:=\sum_{k\in\mZ}a_k U_\alpha^k\in W_{\alpha,p}^{SO},
\quad
A_-:=\sum_{k\in\mZ}b_k U_\beta^k\in W_{\beta,p}^{SO}
\]
are left (resp., right) invertible on the space $L^p(\mR_+)$;

\item[{\rm (ii)}]
for every $\xi\in\Delta$, the function $n$ defined by
\eqref{eq:def-apm}--\eqref{eq:def-n} satisfies the inequality
\begin{equation}\label{eq:n-nondegeneracy}
\inf_{x\in\mR}|n(\xi,x)|>0;
\end{equation}
\end{enumerate}
then the operator $N=A_+P_\gamma^++A_-P_\gamma^-$ is left (resp., right)
Fredholm on the space $L^p(\mR_+)$.
\end{theorem}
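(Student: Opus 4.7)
The plan is to apply the Allan--Douglas local principle to the coset $N^\pi$ inside the Calkin algebra $\cB^\pi$, localizing over the maximal ideal space $M(SO(\mR_+))=\Delta\cup\mR_+$. As a preliminary, I would strengthen hypothesis (i) by invoking the inverse-closedness result of \cite{FTK-BJMA}: from the mere one-sided invertibility of $A_\pm$ on $L^p(\mR_+)$, one extracts one-sided inverses $A_+^\sharp\in W_{\alpha,p}^{SO}$ and $A_-^\sharp\in W_{\beta,p}^{SO}$ that lie in the respective Wiener algebras themselves. These Wiener-algebra inverses are the essential tool for constructing a local parametrix at interior points, and they are the promised bridge from hypothesis (i) to the operator-theoretic setup.

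Next I would set up the localization by constructing a commutative central subalgebra of $\cB^\pi$ isomorphic to $SO(\mR_+)$ via $f\mapsto(fI)^\pi$. Centrality modulo compacts requires that the commutators $[fI,U_\mu^k]$ and $[fI,S_\gamma]$ be compact for $f\in SO(\mR_+)$, $\mu\in\{\alpha,\beta\}$, $k\in\mZ$; both facts belong to the Mellin pseudodifferential toolkit assembled in \cite{KKL11a,KKL-JMAA,KKL-JIEA}. The Allan--Douglas principle then reduces one-sided invertibility of $N^\pi$ to one-sided invertibility of its local cosets at every $\xi\in\Delta\cup\mR_+$. At an interior point $\xi=t\in\mR_+$ the slowly oscillating coefficients act as ordinary continuous functions near $t$, so the local algebra collapses to a fixed-base-point expression in $A_\pm$ and $P_\gamma^\pm$; composition with the Wiener-algebra inverses $A_\pm^\sharp$ obtained in the first step furnishes a local one-sided inverse modulo compacts without any additional nondegeneracy assumption on $n(t,\cdot)$.

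The substantive local analysis takes place at $\xi\in\Delta$. There the coset $N^\pi$ is locally represented by a Mellin pseudodifferential operator with symbol $n(\xi,x)=a_+(\xi,x)p_\gamma^+(x)+a_-(\xi,x)p_\gamma^-(x)$; the slow oscillation of the coefficients $a_k,b_k$ and of the exponent functions $\omega,\eta$ together with the Wiener-norm bound \eqref{eq:Wiener-norm} place $a_\pm(\xi,\cdot)$ in a suitable Mellin symbol class on $\mR$, uniformly in $\xi$. Hypothesis (ii) then supplies a bounded pointwise reciprocal $1/n(\xi,x)$, and the Mellin-PDO parametrix calculus with slowly oscillating data, as developed in \cite{KKL-JMAA,KKL-JIEA}, produces a local one-sided inverse of $N^\pi$ at $\xi$ modulo compacts. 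Assembling the local inverses via Allan--Douglas yields a global one-sided inverse of $N^\pi$ in $\cB^\pi$, equivalent to $N$ being left (resp., right) Fredholm.

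The principal obstacle is the analysis at $\xi\in\Delta$. The symbols $a_\pm(\xi,x)$ are infinite Wiener sums whose coefficients $a_k(\xi),b_k(\xi)$ are only defined through the Gelfand transform on the abstract fibres $M_0(SO(\mR_+))$ and $M_\infty(SO(\mR_+))$, so one has to verify that the termwise inversion stays inside the Mellin symbol class, that the resulting parametrix admits a bounded realization on $L^p(\mR_+)$, and that the error is compact. The slow oscillation of the data, rather than mere continuity at $0$ and $\infty$, is exactly what keeps this Mellin-PDO machinery operative on the full $\Delta$-fibre and what allows the one-sided-invertibility hypothesis on $A_\pm$ to propagate intact to a genuine one-sided Fredholm conclusion for $N$.
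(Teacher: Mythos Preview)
Your localization scheme has a structural gap. You localize over the image of $SO(\mR_+)$ in $\cB^\pi$, whose maximal ideal space is $\Delta\cup\mR_+$. The paper instead localizes over the larger commutative algebra $\cZ^\pi$ generated modulo compacts by $I$, $S_0$, and the operators $cR_0$ with $c\in SO(\mR_+)$; its maximal ideal space is $\{-\infty,+\infty\}\cup(\Delta\times\mR)$ and contains \emph{no} interior points $t\in\mR_+$. This choice is not cosmetic: at the two distinguished points $\pm\infty$ one has $(P_0^\mp)^\pi\in\cJ_{\pm\infty}^\pi$, so the local coset of $N$ collapses to $A_\pm^\pi+\cJ_{\pm\infty}^\pi$, and \emph{this} is where condition~(i) together with the one-sided inverse closedness of $W_{\alpha,p}^{SO}$ is actually consumed. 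At each $(\xi,x)\in\Delta\times\mR$ the local coset reduces to the scalar $n(\xi,x)I^\pi+\cJ_{\xi,x}^\pi$, so condition~(ii) yields two-sided local invertibility with no PDO parametrix needed.

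In your coarser scheme the interior points $t\in\mR_+$ must be handled, and your argument there does not work. The local coset of $N$ at $t$ is still $A_+P_\gamma^++A_-P_\gamma^-$ modulo the ideal generated by $\{(fI)^\pi:f\in SO(\mR_+),\ f(t)=0\}$; neither $U_\alpha$, $U_\beta$ nor $P_\gamma^\pm$ becomes trivial in that quotient. Since $P_\gamma^\pm$ are not idempotent (indeed $P_\gamma^+P_\gamma^-=P_\gamma^-P_\gamma^+=-\tfrac14 R_\gamma^2$ and $(P_\gamma^+)^2+(P_\gamma^-)^2=I+\tfrac12 R_\gamma^2$), composing with $A_+^\sharp P_\gamma^++A_-^\sharp P_\gamma^-$ leaves an uncontrolled remainder of the form $\tfrac14\bigl(2I-A_+^\sharp A_--A_-^\sharp A_+\bigr)R_\gamma^2$, which is neither compact nor in the local ideal at $t$. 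Nor can you fall back on symbol invertibility, because hypothesis~(ii) says nothing about $n(t,\cdot)$ for $t\in\mR_+$. The fix is exactly the paper's: enlarge the central subalgebra to include $S_0$ and the operators $cR_0$, so that the Mellin variable is already resolved by the localization and the endpoints $\pm\infty$ absorb $P_0^\mp$, making interior points disappear altogether.
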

We conjecture that conditions (i) and (ii) of
Theorem~\ref{th:one-sided-Fredholmness} are also necessary for the one-sided
Fredholmness of the operator $N$.

The paper is organized as follows.
Section~\ref{sec:auxiliary} contains some auxiliary results.
In Section~\ref{sec:Wiener-FO}, on the basis of
recent results from \cite{FTK-BJMA}, we show that if an operator
$A\in W_{\alpha,p}^{SO}$ is left (resp., right) invertible, then at least
one of its left (resp., right) inverses belongs to the same algebra
$W_{\alpha,p}^{SO}$.

Section~\ref{sec:algebra-A} is
devoted to the algebra $\cA$ generated by the identity operator $I$ and the
operator $S_0$. We recall that $\cA$ is the smallest Banach subalgebra of
$\cB$ that contains all operators similar to Mellin convolution operators
with continuous symbols. In particular, the algebra $\cA$ contains the
operator $S_\gamma$ and the operator $R_\gamma$ with fixed singularities
defined by
\begin{equation}\label{eq:def-R}
(R_\gamma f)(t):=\frac{1}{\pi i}\int_{\mR_+}
\left(\frac{t}{\tau}\right)^\gamma\frac{f(\tau)}{\tau+t}d\tau,
\end{equation}
where the integral is understood in the principal value sense. We also recall
the description of the maximal ideal space of the algebra
$\cA^\pi:=(\cA+\cK)/\cK$.

In Section~\ref{sec:localization}, we recall a version of the Allan-Douglas
local principle suitable for the study of one-sided invertibility in
subalgebras of the Calkin algebra $\cB^\pi=\cB/\cK$ (see
\cite[Theorem~1.35(a)]{BS06}). Following \cite[Section~6]{KKL11a}, we consider
the algebra $\cZ$ generated by $\cA\cup\cK$ and the operators of the form
$cR_0$, where $c\in SO(\mR_+)$. We recall that the maximal ideal space of 
$\cZ^\pi:=\cZ/\cK$ is homeomorphic to the set 
$\{-\infty,+\infty\}\cup(\Delta\times\mR)$. Further, we consider the algebra
$\Lambda$ of operators of local type that consists of all operators $A\in\cB$
such that $AC-CA\in\cK$ for all $C\in\cZ$. Since $\cZ^\pi$ is a commutative
central subalgebra of $\Lambda^\pi:=\Lambda/\cK$, we can apply the 
Allan-Douglas local principle to $\Lambda^\pi\subset\cB^\pi$ and its central
subalgebra $\cZ^\pi$. In particular, an operator $T\in\Lambda$ is left 
(resp., right) Fredholm if certain cosets $T^\pi+\cJ^\pi_{\xi,x}$,
$T^\pi+\cJ^\pi_{+\infty}$, and $T^\pi+\cJ^\pi_{-\infty}$ are left (resp.,
right) invertible in the corresponding local algebras $\Lambda^\pi_{\xi,x}$,
$\Lambda^\pi_{+\infty}$, and $\Lambda^\pi_{-\infty}$. Here $(\xi,x)$
runs through $\Delta\times\mR$. This result is applicable to the operator $N$
because it belongs to the algebra $\cF_{\alpha,\beta}$ generated by the
operators $S_0$, $U_\alpha^{\pm 1}$, $U_\beta^{\pm 1}$ and the multiplication
operators $cI$ with $c\in SO(\mR_+)$. In turn, this algebra is contained in
the algebra $\Lambda$ of operators of local type.

In Section~\ref{sec:Mellin-PDO}, we recall the definition of the algebra
$\cE(\mR_+,V(\mR))$ of slowly oscillating functions on $\mR_+$ with values
in the algebra $V(\mR)$ of all absolutely continuous functions of finite
total variation. This algebra is important for our purposes because
Mellin pseudodifferential operators with symbols in $\cE(\mR_+,V(\mR))$
commute modulo compact operators. Moreover, if $\alpha\in SOS(\mR_+)$,
then $U_\alpha R_\gamma$ is similar to a Mellin pseudodifferential
operator with symbol in $\cE(\mR_+,V(\mR))$ up to a compact operator.
These results are important ingredients of the proof of two-sided
invertibility of the cosets $N^\pi+\cJ^\pi_{\xi,x}$ in the quotient algebras
$\Lambda_{\xi,x}^\pi$ for $(\xi,x)\in\Delta\times\mR$ under condition (ii)
of Theorem~\ref{th:one-sided-Fredholmness}.

In Section~\ref{sec:proofs}, we prove Theorem~\ref{th:one-sided-Fredholmness}.
Since, according to Section~\ref{sec:Wiener-FO},
there are left/right inverses of $A_+$ (resp., $A_-$) belonging to
$W_{\alpha,p}^{SO}\subset\Lambda$ (resp.,
$W_{\beta,p}^{SO}\subset\Lambda$), the left/right invertibility of
$A_\pm$ implies the left/right invertibility of the coset
$A_\pm^\pi+\cJ^\pi_{\pm\infty}=N^\pi+\cJ^\pi_{\pm\infty}$ in the local
algebra $\Lambda^\pi_{\pm\infty}$. Finally, with the aid of the results
of Section~\ref{sec:Mellin-PDO}, we show that condition (ii) of
Theorem~\ref{th:one-sided-Fredholmness} is sufficient for the two-sided
invertibility of the cosets $N^\pi+\cJ^\pi_{\xi,x}$ in the local algebras
$\Lambda^{\pi}_{\xi,x}$ for all  $(\xi,x)\in\Delta\times\mR$. To complete
the proof of  Theorem~\ref{th:one-sided-Fredholmness}, it remains to
apply the Allan-Douglas local principle (see Section~\ref{sec:localization}).

Finally, in Section~\ref{sec:binomial} we formulate
criteria for the two-sided and one-sided invertibility of a binomial functional
operator with shift in the form $A=aI-bU_\alpha$, which were obtained in
\cite{KKL-MJOM}. These results together with Theorem~\ref{th:one-sided-Fredholmness}
imply more effective sufficient conditions for the left, right, and two-sided
Fredholmness of the operator $(aI-bU_\alpha)P_\gamma^++(cI-dU_\beta)P_\gamma^-$
with $a,b,c,d\in SO(\mR_+)$ and $\alpha,\beta\in SOS(\mR_+)$.
\section{Auxiliary results}\label{sec:auxiliary}
\subsection{One-sided invertibility of operators on Hilbert spaces}
\begin{lemma}\label{le:Hilbert-one-sided}
Let $\mathcal{H}$ be a Hilbert space and $A\in\cB(\mathcal{H})$.
\begin{enumerate}
\item[(a)]
The operator $A$ is left invertible on the space $\mathcal{H}$ if and only if
the operator $A^*A$ is invertible on the space $\mathcal{H}$. In this case,
one of the left inverses of $A$ is given by $A^L=(A^*A)^{-1}A^*$.

\item[(b)]
The operator $A$ is right invertible on the space $\mathcal{H}$ if and only if
the operator $AA^*$ is invertible on the space $\mathcal{H}$. In this case,
one of the right inverses of $A$ is given by 
$A^R=A^*(AA^*)^{-1}$.
\end{enumerate}
\end{lemma}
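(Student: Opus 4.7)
The plan is to prove part (a) directly from first principles and then deduce part (b) by applying (a) to $A^*$, since $A$ is right invertible on $\mathcal{H}$ if and only if $A^*$ is left invertible on $\mathcal{H}$ (by taking adjoints of the identity $AC=I$).

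For part (a), the easy direction is verifying the formula. If $A^*A$ is invertible, then
\[
(A^*A)^{-1}A^*\cdot A=(A^*A)^{-1}(A^*A)=I,
\]
so $A^L:=(A^*A)^{-1}A^*$ is a left inverse of $A$. For the converse, I would assume $A$ admits a left inverse $B\in\cB(\mathcal{H})$, so that $BA=I$, which immediately yields the lower bound $\|Ax\|\ge\|x\|/\|B\|$ for all $x\in\mathcal{H}$. Combining this with the Cauchy--Schwarz estimate
\[
\|A^*Ax\|\,\|x\|\ge|\langle A^*Ax,x\rangle|=\|Ax\|^2\ge\|x\|^2/\|B\|^2,
\]
I obtain $\|A^*Ax\|\ge\|x\|/\|B\|^2$ for $x\ne 0$; that is, $A^*A$ is bounded below. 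Since $A^*A$ is self-adjoint, being bounded below is equivalent to invertibility: bounded below gives injectivity and closed range, while self-adjointness gives $(\operatorname{range}(A^*A))^\perp=\ker(A^*A)=\{0\}$, hence dense range.

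For part (b), I would simply apply (a) with $A$ replaced by $A^*$. Since $(A^*)^*A^*=AA^*$, statement (a) tells me that $A^*$ is left invertible if and only if $AA^*$ is invertible, in which case $(AA^*)^{-1}A$ is one of its left inverses. Taking adjoints of $((AA^*)^{-1}A)\,A^*=I$ and using the self-adjointness of $AA^*$ (hence of $(AA^*)^{-1}$) yields $A\cdot A^*(AA^*)^{-1}=I$, which gives the displayed right inverse.

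The argument is essentially standard Hilbert-space operator theory; the only real point that needs care is the equivalence between "bounded below" and "invertible" for the self-adjoint operator $A^*A$. This is where the Hilbert-space hypothesis is genuinely used, since on a general Banach space bounded-below only yields left invertibility. I do not expect any serious obstacle beyond invoking this standard fact.
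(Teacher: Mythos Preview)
Your proof is correct and follows essentially the same approach as the paper: the sufficiency is the obvious computation, the necessity proceeds via the same inequality $\|f\|^2\le\|B\|^2\,|\langle A^*Af,f\rangle|$ obtained from a left inverse $B$, and part (b) is reduced to (a) by passing to adjoints. The only cosmetic difference is that the paper invokes the Lax--Milgram theorem to conclude invertibility of $A^*A$ from this coercivity estimate, whereas you push one step further via Cauchy--Schwarz to get $A^*A$ bounded below and then use the standard fact that a bounded-below self-adjoint operator is invertible; these are equivalent ways of finishing the argument.
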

This statement is known, although we are not able to provide a precise
reference. The proof of the sufficiency portion of part (a) is a trivial
computation. Now assume that $\langle\cdot,\cdot\rangle$ is the inner product
of $\mathcal{H}$ and $A^L\in\cB(\mathcal{H})$ is a left inverse of $A$. Then
for every $f\in\mathcal{H}$,
\[
\|f\|^2
\le
\|A^L\|^2\|Af\|^2
=
\|A^L\|^2|\langle Af,Af\rangle|
=
\|A^L\|^2|\langle A^*Af,f\rangle|.
\]
In view of the previous inequality, the invertibility of the operator $A^*A$
follows from the Lax-Milgram theorem (see, e.g.,
\cite[Chap.~III, Section~7]{Y80}). This completes the proof of part (a). The
proof of part (b) is reduced to the previous one by passing to adjoint
operators.

Another proof of the above lemma can be obtained from general results
for $C^*$-algebras contained in \cite[\S~23, Corollaries~2--3]{N72}.

Lemma~\ref{le:Hilbert-one-sided} can also be deduced from more general
results on the Moore-Penrose invertibility of operators on a Hilbert space
(see \cite[Example~2.16]{HRS01} or \cite[Theorem~4.24]{BS99}). Notice
that the operator $A^L$ (resp., $A^R$) is the Moore-Penrose inverse
of the operator $A$.
\subsection{Fundamental property of slowly oscillating functions}
\begin{lemma}[{\cite[Proposition~2.2]{K08}}]
\label{le:SO-fundamental-property}
Let $\{a_k\}_{k=1}^\infty$ be a countable subset of $SO(\mR_+)$ and
$s\in\{0,\infty\}$. For each $\xi\in M_s(SO(\mR_+))$ there exists a
sequence $\{t_n\}_{n\in\mN}\subset\mR_+$ such that $t_n\to s$ as $n\to\infty$
and
\begin{equation}\label{eq:SO-fundamental-property}
a_k(\xi)=\lim_{n\to\infty}a_k(t_n)\quad\mbox{for all}\quad k\in\mN.
\end{equation}
Conversely, if $\{t_n\}_{n\in\mN}\subset\mR_+$ is a sequence such that
$t_n\to s$ as $n\to\infty$ and the limits $\lim_{n\to\infty}a_k(t_n)$
exist for all $k\in\mN$, then there exists a functional
$\xi\in M_s(SO(\mR_+))$ such that \eqref{eq:SO-fundamental-property} holds.
\end{lemma}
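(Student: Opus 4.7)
The plan is to exploit the identification $\Delta={\rm clos}_{SO^*}\mR_+\setminus\mR_+$ recalled just before Theorem~\ref{th:one-sided-Fredholmness}, together with the countability of the test family $\{a_k\}$.

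For the forward direction, fix $\xi\in M_s(SO(\mR_+))\subset\Delta$. Since $\xi$ lies in the weak-$*$ closure of the evaluation functionals $\{\delta_t:t\in\mR_+\}$, every basic weak-$*$ neighbourhood of $\xi$ meets this set. I would test $\xi$ against the countable family $\{a_k\}$ augmented by a single fixed function $\chi\in C(\overline{\mR}_+)$ which is a homeomorphism of $\overline{\mR}_+$ onto its image, e.g.\ $\chi(t)=t/(1+t)$ with $\chi(0)=0$ and $\chi(\infty)=1$. Because $\chi\in C(\overline{\mR}_+)$ and $\xi|_{C(\overline{\mR}_+)}$ is evaluation at $s$, we have $\chi(\xi)=\chi(s)$, so requiring $|\chi(t)-\chi(s)|<1/n$ forces $t$ to be close to $s$ in $\overline{\mR}_+$. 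A diagonal selection inside the nested neighbourhoods
\[
V_n:=\bigl\{t\in\mR_+:|\chi(t)-\chi(s)|<1/n\ \text{ and }\ |a_k(t)-a_k(\xi)|<1/n\ \text{ for }k=1,\dots,n\bigr\}
\]
then produces an actual sequence $t_n\in V_n$ satisfying $t_n\to s$ and, for every fixed $k$, $a_k(t_n)\to a_k(\xi)$ as $n\to\infty$.

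For the converse, assume $\{t_n\}\subset\mR_+$ satisfies $t_n\to s$ and $\lim_n a_k(t_n)$ exists for every $k\in\mN$. View the evaluation functionals $\delta_{t_n}$ as elements of the closed unit ball of $SO(\mR_+)^*$; by the Banach--Alaoglu theorem this ball is weak-$*$ compact, so $\{\delta_{t_n}\}$ admits a weak-$*$ convergent subnet with some limit $\xi$. Three items need to be verified: (i) pointwise limits of multiplicative functionals are multiplicative, so $\xi$ is a character and $\xi\in M(SO(\mR_+))$; (ii) for every $f\in C(\overline{\mR}_+)$, continuity of $f$ together with $t_n\to s$ yields $\xi(f)=f(s)$, so $\xi\in M_s(SO(\mR_+))$; (iii) for each $a_k$, the convergence of the full sequence $a_k(t_n)$ forces every subnet to share the same limit, hence $\xi(a_k)=\lim_n a_k(t_n)$.

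The main obstacle is the forward direction. The dual of $SO(\mR_+)$ is non-separable (since $M(SO(\mR_+))$ is a large non-metrizable compactification of $\mR_+$), so the weak-$*$ closure of $\mR_+$ is a priori only a \emph{net}-theoretic closure. The essential observation is that only countably many test conditions must be satisfied simultaneously, which permits the approximating net to be collapsed to a sequence by the usual diagonal procedure; including $\chi$ in the test family is the additional ingredient that pins the sequence $t_n$ down to $s$ in $\overline{\mR}_+$, which does not follow from weak-$*$ approximation in $SO(\mR_+)^*$ alone.
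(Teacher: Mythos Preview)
The paper does not supply a proof of this lemma at all; it is quoted verbatim from \cite[Proposition~2.2]{K08} and used as a black box in the proof of Lemma~\ref{le:exponent-function-iterations}. So there is nothing in the present paper to compare your argument against.

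That said, your argument is sound and is essentially the standard one. In the forward direction the key point is exactly what you identify: although the weak-$*$ topology on $SO(\mR_+)^*$ is not metrizable, only countably many test functions $\chi,a_1,a_2,\dots$ are needed, so the basic neighbourhoods $V_n$ can be nested and a genuine sequence extracted. Including a homeomorphism $\chi\in C(\overline{\mR}_+)$ among the test functions to force $t_n\to s$ is the correct device. In the converse direction your three verifications (multiplicativity of the cluster point, its restriction to $C(\overline{\mR}_+)$ being evaluation at $s$, and agreement on each $a_k$ because the full sequence $a_k(t_n)$ converges) are precisely what is required. One cosmetic remark: you do not need Banach--Alaoglu on the whole dual ball; it suffices to use compactness of the Gelfand space $M(SO(\mR_+))$ itself, since the $\delta_{t_n}$ already lie there.
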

\subsection{Properties of iterations of slowly oscillating shifts}
In this subsection we collect some properties of iterations of slowly
oscillating shifts. For $t\in\mR_+$ and $k\in\mN$, let
\[
\alpha_0(t):=t,
\quad
\alpha_k(t):=\alpha[\alpha_{k-1}(t)],
\quad
\alpha_{-k}(t):=\alpha_{-1}[\alpha_{-k+1}(t)].
\]
\begin{lemma}[{\cite[Corollary~2.5]{KKL14}}]
\label{le:SOS-iterations}
If $\alpha\in SOS(\mR_+)$, then $\alpha_k\in SOS(\mR_+)$ for every $k\in\mZ$.
\end{lemma}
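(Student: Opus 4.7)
The plan is to prove $\alpha_k\in SOS(\mR_+)$ by induction on $|k|$. The base case $k=0$ (where $\alpha_0=\id$) is trivial, and reducing the negative case to the positive case requires the separate statement $\alpha_{-1}\in SOS(\mR_+)$. For the inductive step with $k\ge 1$, assuming $\alpha_{k-1}\in SOS(\mR_+)$, I need that $\alpha_k=\alpha\circ\alpha_{k-1}$ is again in $SOS(\mR_+)$. Both the inductive step and the case $k=-1$ come down to verifying the four defining properties of $SOS(\mR_+)$, the only delicate one being that the derivative lies in $SO(\mR_+)$.

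The main technical tool is an auxiliary composition lemma: if $f\in SO(\mR_+)$ and $\mu\in SOS(\mR_+)$, then $f\circ\mu\in SO(\mR_+)$. I would prove this by fixing $s\in\{0,\infty\}$ and estimating, for $t,\tau\in[r,2r]$, the quantity $|f(\mu(t))-f(\mu(\tau))|$ by the oscillation of $f$ on $[\mu(r),\mu(2r)]$. Using the representation $\mu(t)=te^{\omega_\mu(t)}$ with $\omega_\mu\in SO(\mR_+)$, one has $\mu(2r)/\mu(r)=2\exp\bigl(\omega_\mu(2r)-\omega_\mu(r)\bigr)\to 2$ as $r\to s$, so for $r$ close to $s$ the interval $[\mu(r),\mu(2r)]$ is contained in a union of at most two intervals of the form $[\rho,2\rho]$ with $\rho\to s$. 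The slow oscillation of $f$ on each of these dyadic pieces then forces the oscillation of $f\circ\mu$ on $[r,2r]$ to tend to $0$.

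Given this lemma, the inductive step is routine: by the chain rule $\alpha_k'=(\alpha'\circ\alpha_{k-1})\cdot\alpha_{k-1}'$, so $\log\alpha_k'$ is bounded, and $\alpha_k'$ is the product of $\alpha'\circ\alpha_{k-1}$ (in $SO(\mR_+)$ by the auxiliary lemma applied with $f=\alpha'$ and $\mu=\alpha_{k-1}$) and $\alpha_{k-1}'$ (in $SO(\mR_+)$ by the inductive hypothesis), hence $\alpha_k'\in SO(\mR_+)$ since $SO(\mR_+)$ is an algebra. That $\alpha_k$ is an orientation-preserving homeomorphism of $\overline{\mR}_+$ with fixed points only at $0$ and $\infty$, and a $C^1$-diffeomorphism of $\mR_+$, is straightforward once one notes that the sign of $\alpha(t)-t$ propagates through iteration. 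For the case $k=-1$, inverting $\alpha(2r)/\alpha(r)\to 2$ yields $\alpha_{-1}(2r)/\alpha_{-1}(r)\to 2$, so the same dyadic-covering argument places $\alpha'\circ\alpha_{-1}$ in $SO(\mR_+)$; its reciprocal $(\alpha_{-1})'$ then also lies in $SO(\mR_+)$ because the values of $\alpha'$ are bounded away from $0$ (as $\log\alpha'$ is bounded).

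The main obstacle is the auxiliary lemma, specifically the quantitative control $\mu(2r)/\mu(r)\to 2$ together with the observation that this allows one to cover $[\mu(r),\mu(2r)]$ by a bounded number of standard dyadic intervals, uniformly for $r$ near $s$. Once that is in place, the remaining verifications are routine applications of the chain rule and of the $C^*$-algebra structure of $SO(\mR_+)$.
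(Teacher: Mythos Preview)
The paper does not prove this lemma; it is simply quoted from \cite[Corollary~2.5]{KKL14}. Your self-contained argument is correct, and in fact your auxiliary composition lemma (if $f\in SO(\mR_+)$ and $\mu\in SOS(\mR_+)$ then $f\circ\mu\in SO(\mR_+)$) is exactly the first assertion of Lemma~\ref{le:composition}, which the paper also cites without proof; so your write-up establishes both results at once. One point deserves a word more of justification: in the case $k=-1$ you write that ``inverting $\alpha(2r)/\alpha(r)\to 2$ yields $\alpha_{-1}(2r)/\alpha_{-1}(r)\to 2$,'' but this is not an immediate inversion, since $\alpha_{-1}$ is not yet known to lie in $SOS(\mR_+)$ and hence your composition lemma does not apply directly. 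The fix is a short bootstrap: with $u=\alpha_{-1}(r)$, $v=\alpha_{-1}(2r)$ one has $v/u=2e^{\omega(u)-\omega(v)}$, and since $\omega$ is bounded this ratio is uniformly bounded above and below; hence $[u,v]$ is covered by a fixed number of dyadic intervals with base tending to $s$, the slow oscillation of $\omega$ forces $\omega(v)-\omega(u)\to 0$, and therefore $v/u\to 2$. With this in hand your dyadic-covering argument gives $\alpha'\circ\alpha_{-1}\in SO(\mR_+)$, and the rest follows as you indicate.
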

\begin{lemma}[{\cite[Lemma~2.3]{KKL11a}}]
\label{le:composition}
If $c\in SO(\mR_+)$ and $\alpha\in SOS(\mR_+)$, then $c\circ\alpha$ belongs to
$SO(\mR_+)$ and
\[
\lim_{t\to s}(c(t)-c[\alpha(t)])=0
\quad\mbox{for}\quad
s\in\{0,\infty\}.
\]
\end{lemma}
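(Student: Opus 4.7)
The lemma contains two assertions---that $c\circ\alpha \in SO(\mR_+)$ and that $c(t)-c(\alpha(t)) \to 0$ as $t \to s$---and the plan is to reduce both to the same quantitative continuity estimate. The only fact about the shift I will use is that $\omega(t) := \log(\alpha(t)/t)$ lies in $SO(\mR_+) \subseteq C_b(\mR_+)$ and is therefore bounded, so there exist constants $0 < m \le M < \infty$ with $m \le \alpha(t)/t \le M$ for every $t \in \mR_+$. Note that neither the smoothness of $\alpha$ nor the condition $\alpha' \in SO(\mR_+)$ is required; only this multiplicative two-sided bound on $\alpha(t)/t$ enters the proof.

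The key preparatory step is to promote the doubling-interval defining condition of $SO(\mR_+)$ to arbitrary $K$-fold intervals: for every $f \in SO(\mR_+)$ and every fixed $K \ge 1$,
\[
\lim_{r\to s}\sup_{t,\tau\in[r,Kr]}|f(t)-f(\tau)| = 0 \qquad (s \in \{0,\infty\}).
\]
I would prove this by partitioning $[r, Kr]$ into at most $\lceil \log_2 K\rceil$ consecutive doubling sub-intervals $[2^j r, 2^{j+1}r]$ and applying the triangle inequality, noting that $r \to s$ forces each $2^j r \to s$ as well, so each contribution from the definition of $SO$ vanishes.

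With this in hand, both claims are immediate. For $c \circ \alpha \in SO(\mR_+)$: boundedness and continuity are clear, and for $t, \tau \in [r, 2r]$ the orientation-preserving property of $\alpha$ places $\alpha(t), \alpha(\tau)$ in $[\alpha(r), \alpha(2r)] \subseteq [mr, 2Mr]$, whence
\[
|c(\alpha(t))-c(\alpha(\tau))| \le \sup_{u,v\in[mr,2Mr]}|c(u)-c(v)| \longrightarrow 0
\]
as $r \to s$, by the extended oscillation bound applied to $c$ with $K = 2M/m$ on the interval starting at $mr$. For the second assertion, both $t$ and $\alpha(t)$ lie in $[mt, Mt]$, so
\[
|c(t)-c(\alpha(t))| \le \sup_{u,v\in[mt,Mt]}|c(u)-c(v)| \longrightarrow 0
\]
as $t \to s$, by the same lemma with $K = M/m$.

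There is no serious obstacle to this plan; the argument is a routine quantitative continuity computation. The one substantive point is the extension of the slow-oscillation condition from doubling intervals to $K$-fold intervals, which amounts to a standard triangle-inequality chaining. Morally, the lemma holds because $\alpha$ moves every point by at most a fixed multiplicative factor, and slowly oscillating functions by definition cannot change much across such a multiplicatively bounded window near $0$ or $\infty$.
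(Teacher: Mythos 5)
Your proof is correct and follows essentially the same route as the proof of the cited result \cite[Lemma~2.3]{KKL11a}, which this paper quotes without reproving: one uses that the exponent function $\omega(t)=\log[\alpha(t)/t]$ of $\alpha\in SOS(\mR_+)$ is bounded, so $\alpha$ moves points within a fixed multiplicative window $[mt,Mt]$, and then upgrades Sarason's doubling condition to oscillation over $K$-fold intervals $[r,Kr]$ by the chaining argument you describe. The only cosmetic point: to have $t\in[mt,Mt]$ in your last display you should normalize $m\le 1\le M$ (e.g.\ replace $m$ by $\min\{m,1\}$ and $M$ by $\max\{M,1\}$), which is harmless since the bounds are one-sided.
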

\begin{lemma}[{\cite[Lemma~2.6]{KKL16}}]
\label{le:exponent-function-inverse}
Let $\alpha\in SOS(\mR_+)$ and $\alpha_{-1}$ be the inverse function to $\alpha$.
If $\omega$ and $\omega_{-1}$ are the exponent functions of $\alpha$ and
$\alpha_{-1}$, respectively, then $\omega(\xi)=-\omega_{-1}(\xi)$ for all
$\xi\in\Delta$.
\end{lemma}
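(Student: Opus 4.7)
The plan is to translate the pointwise identity $\alpha_{-1}\circ\alpha=\id$ on $\mR_+$ into a statement about the Gelfand transforms on the fibers $M_s(SO(\mR_+))$, $s\in\{0,\infty\}$, by exploiting the ``shift-invariance'' of slowly oscillating functions modulo vanishing errors at $0$ and $\infty$.

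First, I would record the basic pointwise identity. Since $\alpha_{-1}(\alpha(t))=t$ for all $t\in\mR_+$, taking logarithms yields
\[
\omega_{-1}(\alpha(t))=\log\frac{\alpha_{-1}(\alpha(t))}{\alpha(t)}=\log\frac{t}{\alpha(t)}=-\omega(t),
\qquad t\in\mR_+.
\]
So the identity we need at the level of $\Delta$ is a direct transcription of this, provided that evaluation of $\omega_{-1}$ at $\xi$ agrees with evaluation of $\omega_{-1}\circ\alpha$ at $\xi$.

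Second, I would justify that replacement. By Lemma~\ref{le:SOS-iterations}, $\alpha_{-1}\in SOS(\mR_+)$, hence $\omega_{-1}\in SO(\mR_+)$; applying Lemma~\ref{le:composition} with $c=\omega_{-1}$ then gives $\omega_{-1}\circ\alpha\in SO(\mR_+)$ together with
\[
\lim_{t\to s}\bigl(\omega_{-1}(t)-\omega_{-1}(\alpha(t))\bigr)=0,\qquad s\in\{0,\infty\}.
\]
This is the ``slowly oscillating shifts preserve slowly oscillating values at $0$ and $\infty$'' principle that makes the argument work.

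Third, I would unify the two evaluations via the fundamental property of $SO(\mR_+)$. Fix $\xi\in\Delta$, say $\xi\in M_s(SO(\mR_+))$. Apply Lemma~\ref{le:SO-fundamental-property} to the finite family $\{\omega,\omega_{-1},\omega_{-1}\circ\alpha\}\subset SO(\mR_+)$ to obtain a sequence $t_n\to s$ for which the values $\omega(\xi)$, $\omega_{-1}(\xi)$, $(\omega_{-1}\circ\alpha)(\xi)$ are all simultaneously realized as limits along $t_n$. Combining everything,
\[
\omega_{-1}(\xi)=\lim_{n\to\infty}\omega_{-1}(t_n)
=\lim_{n\to\infty}\omega_{-1}(\alpha(t_n))
=-\lim_{n\to\infty}\omega(t_n)
=-\omega(\xi),
\]
where the second equality uses the vanishing difference from Lemma~\ref{le:composition} and the third uses the pointwise identity from the first step.

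The only real obstacle is the simultaneous realization of the two Gelfand values along one sequence; this is precisely what the second half of Lemma~\ref{le:SO-fundamental-property} delivers, so the proof is essentially a bookkeeping assembly of the three cited lemmas.
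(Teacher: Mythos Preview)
Your argument is correct. The paper does not actually prove this lemma; it is quoted from \cite[Lemma~2.6]{KKL16}. That said, your method is exactly the template the paper uses in the very next result, Lemma~\ref{le:exponent-function-iterations}: a pointwise identity for exponent functions, Lemma~\ref{le:composition} to kill the shift in the limit, and Lemma~\ref{le:SO-fundamental-property} to realize all relevant Gelfand values along a common sequence. So your proof is in the same spirit as the paper's own arguments.

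One small slip in your closing commentary: the ``simultaneous realization'' you need is the \emph{first} half of Lemma~\ref{le:SO-fundamental-property} (given $\xi$, produce a sequence), not the converse direction you name. Your Step~3 already invokes it correctly; only the final sentence mislabels which part is doing the work.
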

\begin{lemma}\label{le:exponent-function-iterations}
Let $\alpha\in SOS(\mR_+)$ and let $\omega$ be its exponent function.
If $k\in\mZ$ and $\omega_k$ is the exponent function of
$\alpha_k$, then $\omega_k(\xi)=k\omega(\xi)$ for every $\xi\in\Delta$.
\end{lemma}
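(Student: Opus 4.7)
The plan is to reduce the identity $\omega_k(\xi)=k\omega(\xi)$ to a telescoping decomposition of $\omega_k$ as a finite sum of shifts of $\omega$, and then to collapse every shifted copy of $\omega$ into the same functional value at $\xi\in\Delta$ by means of Lemma~\ref{le:composition}.

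First I would treat $k\ge 1$. Using $\alpha_0(t)=t$ and $\alpha_{j+1}=\alpha\circ\alpha_j$, one obtains the telescoping identity
\[
\omega_k(t)=\log\frac{\alpha_k(t)}{t}
=\sum_{j=0}^{k-1}\log\frac{\alpha(\alpha_j(t))}{\alpha_j(t)}
=\sum_{j=0}^{k-1}\omega[\alpha_j(t)],\qquad t\in\mR_+.
\]
By Lemma~\ref{le:SOS-iterations}, each $\alpha_j$ belongs to $SOS(\mR_+)$; applying Lemma~\ref{le:composition} with $c=\omega\in SO(\mR_+)$ and with $\alpha_j$ in the role of the shift yields $\omega\circ\alpha_j\in SO(\mR_+)$ and
\[
\lim_{t\to s}\bigl(\omega(t)-\omega[\alpha_j(t)]\bigr)=0\quad\text{for}\quad s\in\{0,\infty\}.
\]
Hence $\omega-\omega\circ\alpha_j$ extends continuously to $\overline{\mR}_+$ with value zero at both endpoints, so it is annihilated by every $\xi\in\Delta$, and $(\omega\circ\alpha_j)(\xi)=\omega(\xi)$. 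Summing over $j=0,\dots,k-1$ gives $\omega_k(\xi)=k\omega(\xi)$ on $\Delta$.

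The case $k=0$ is immediate, since $\alpha_0=\id$ and $\omega_0\equiv 0$. For $k\le -1$, write $k=-m$ with $m\ge 1$; from the definition, $\alpha_k=(\alpha_{-1})_m$. Applying the positive-index result proved above to the shift $\alpha_{-1}\in SOS(\mR_+)$ (whose exponent function is $\omega_{-1}$) yields $\omega_k(\xi)=m\,\omega_{-1}(\xi)$ for every $\xi\in\Delta$. Combining this with Lemma~\ref{le:exponent-function-inverse}, which asserts $\omega_{-1}(\xi)=-\omega(\xi)$ on $\Delta$, produces $\omega_k(\xi)=-m\omega(\xi)=k\omega(\xi)$, completing the argument for all $k\in\mZ$.

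No step presents a genuine obstacle; the only point that deserves explicit verification is the claim that a function $f\in SO(\mR_+)$ with $\lim_{t\to 0}f(t)=\lim_{t\to\infty}f(t)=0$ satisfies $\xi(f)=0$ for every $\xi\in\Delta$. This follows from the inclusion $C(\overline{\mR}_+)\subset SO(\mR_+)$ together with the identity $\xi|_{C(\overline{\mR}_+)}=s$ for $\xi\in M_s(SO(\mR_+))$; alternatively, Lemma~\ref{le:SO-fundamental-property} permits one to realise $\xi$ by a sequence $t_n\to s$ and to pass to the limit in $\omega(t_n)-\omega[\alpha_j(t_n)]$.
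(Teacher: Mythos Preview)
Your proposal is correct and follows essentially the same route as the paper: the telescoping identity $\omega_k=\sum_{j=0}^{k-1}\omega\circ\alpha_j$, the use of Lemmas~\ref{le:SOS-iterations} and~\ref{le:composition} to conclude $(\omega\circ\alpha_j)(\xi)=\omega(\xi)$ on $\Delta$, and the reduction of negative $k$ to the positive case via Lemma~\ref{le:exponent-function-inverse}. The only cosmetic differences are that the paper invokes Lemma~\ref{le:SO-fundamental-property} explicitly (your second alternative) rather than the $C(\overline{\mR}_+)$ argument, and that for $k<0$ the paper applies Lemma~\ref{le:exponent-function-inverse} to $\alpha_{-k}$ to get $\omega_k(\xi)=-\omega_{-k}(\xi)$, whereas you iterate $\alpha_{-1}$; both variants are equally valid.
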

\begin{proof}
For $k=0,1$, the statement is trivial. If $k>1$, then
\begin{equation}\label{eq:exponent-function-iterations-1}
\omega_k(t)
=
\log\frac{\alpha_k(t)}{t}
=
\log\left(\prod_{j=0}^{k-1}
\frac{\alpha[\alpha_j(t)]}{\alpha_j(t)}\right)
=\sum_{j=0}^{k-1}\omega[\alpha_j(t)],
\quad t\in\mR_+.
\end{equation}
Since $\omega\in SO(\mR_+)$, we deduce from
Lemmas~\ref{le:SOS-iterations}--\ref{le:composition} that for every
integer
$j\in\{0,\dots,k-1\}$, the function $\omega\circ\alpha_j$ belongs to $SO(\mR_+)$
and
\begin{equation}\label{eq:exponent-function-iterations-2}
\lim_{t\to s}(\omega(t)-\omega[\alpha_j(t)])=0,
\quad
s\in\{0,\infty\}.
\end{equation}
Fix $s\in\{0,\infty\}$ and $\xi\in M_s(SO(\mR_+))$. By
Lemma~\ref{le:SO-fundamental-property}, there is a sequence
$\{t_n\}_{n\in\mN}\subset\mR_+$ such that $t_n\to s$ as $n\to\infty$ and
\begin{equation}\label{eq:exponent-function-iterations-3}
\omega(\xi)=\lim_{n\to\infty}\omega(t_n),
\quad
(\omega\circ\alpha_j)(\xi)=\lim_{n\to\infty}\omega[\alpha_j(t_n)].
\end{equation}
Equalities \eqref{eq:exponent-function-iterations-2}--\eqref{eq:exponent-function-iterations-3}
imply that for $j\in\{0,\dots,k-1\}$,
\[
(\omega\circ\alpha_j)(\xi)-\omega(\xi)
=
\lim_{n\to\infty}(\omega[\alpha_j(t_n)]-\omega(t_n))
=
0.
\]
We derive from \eqref{eq:exponent-function-iterations-1} and the above
equalities that
\[
\omega_k(\xi)=\sum_{j=0}^{k-1}(\omega\circ\alpha_j)(\xi)=k\omega(\xi),
\]
which completes the proof for $k>1$.

If $k<0$, then we have $\omega_{-k}(\xi)=-k\omega(\xi)$ by the
statement just proved. On the other hand, we deduce from
Lemma~\ref{le:exponent-function-inverse} that 
$\omega_k(\xi)=-\omega_{-k}(\xi)$.
Thus, $\omega_k(\xi)=-(-k)\omega(\xi)=k\omega(\xi)$ for all $\xi\in\Delta$.
\end{proof}
\section{Weak one-sided inverse closedness of the algebra
\boldmath{$W_{\alpha,p}^{SO}$}}
\label{sec:Wiener-FO}
\subsection{Inverse closedness of the algebra \boldmath{$W_{\alpha,p}^{SO}$ in
the algebra $\cB$}}
Let $\mathfrak{A}\subset\mathfrak{B}$ be two Banach algebras with the same
unit element. Recall that the algebra $\mathfrak{A}$ is said to be inverse
closed in the algebra $\mathfrak{B}$ if for every element $a\in\mathfrak{A}$
invertible in the algebra $\mathfrak{B}$ its inverse $a^{-1}$
belongs to the algebra $\mathfrak{A}$.

We say that the algebra $\mathfrak{A}$ is weakly left (resp.,
right) inverse closed in the algebra $\mathfrak{B}$ if for every element
$a\in\mathfrak{A}$, which is left (resp., right) invertible in the algebra
$\mathfrak{B}$, there exists at least one its left (resp., right) inverse
$a^{(-1)}$ that belongs to the algebra $\mathfrak{A}$.
\begin{theorem}[{\cite[Theorem~7.4]{FTK-BJMA}}]
\label{th:inverse-closedness}
For every $p\in(1,\infty)$,
the algebra $W_{p,\alpha}^{SO}$ is inverse closed in the algebra $\cB$.
\end{theorem}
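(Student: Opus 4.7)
The plan is to treat $W_{\alpha,p}^{SO}$ as an $\ell^1$-crossed product of the $C^*$-algebra $SO(\mR_+)$ by the $\mZ$-action $c\mapsto c\circ\alpha$, and to prove a noncommutative Wiener-type inverse-closedness theorem via symbol calculus. To any $A=\sum_{k\in\mZ}a_kU_\alpha^k\in W_{\alpha,p}^{SO}$ I would attach the symbol
\[
a(\xi,x):=\sum_{k\in\mZ}a_k(\xi)e^{ik\omega(\xi)x},\qquad(\xi,x)\in(\Delta\cup\mR_+)\times\mR,
\]
where $\omega$ is the exponent function of $\alpha$. By Lemma~\ref{le:composition} the twist $c\mapsto c\circ\alpha_k$ becomes trivial at any $\xi\in\Delta$, so the symbol map is multiplicative when restricted to $\Delta\times\mR$, and each slice $a(\xi,\cdot)$ belongs to the classical Wiener algebra of absolutely convergent series in $e^{i\omega(\xi)x}$.

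The first half of the argument is to show that invertibility of $A$ in $\cB$ forces $\inf_{(\xi,x)\in\Delta\times\mR}|a(\xi,x)|>0$. For fixed $\xi\in M_s(SO(\mR_+))$, Lemma~\ref{le:SO-fundamental-property} provides a sequence $t_n\to s$ with $a_k(t_n)\to a_k(\xi)$ simultaneously for all $k$. Using Lemmas~\ref{le:SOS-iterations}--\ref{le:exponent-function-iterations} to track how iterated shifts transform Mellin-type exponentials, I would construct normalized test functions concentrated near $t_n$ whose behavior under $A$ is asymptotic to multiplication by $a(\xi,x)$, making them approximate eigenvectors of $A$ with approximate eigenvalue $a(\xi,x)$. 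Boundedness below of $A$ in $\cB$ then yields the required lower bound on the symbol.

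The second half produces the inverse. Given the uniform lower bound on the symbol (the analogous bound at interior points $t\in\mR_+$ follows from similar but easier local testing), the classical Wiener--L\'evy theorem applied slice by slice gives, for each fixed $\xi$, a reciprocal $1/a(\xi,x)=\sum_k b_k(\xi)e^{ik\omega(\xi)x}$ with $\sum_k|b_k(\xi)|<\infty$. The main obstacle, and the heart of the proof, is to promote these pointwise coefficients to genuine functions $b_k\in SO(\mR_+)$ with $\sum_k\|b_k\|_{C_b(\mR_+)}<\infty$. I would address this by a Bochner--Phillips-type argument adapted to the $\ell^1$-crossed product: since $SO(\mR_+)$ is an inverse-closed $C^*$-subalgebra of $C_b(\mR_+)$ and $\mZ$ is amenable, the inverse can be built via holomorphic functional calculus along a contour in $\mC\setminus\sigma_\cB(A)$ surrounding $\sigma_\cB(A)$ while avoiding $0$, with resolvent Wiener-norm estimates controlled by Lemmas~\ref{le:SO-fundamental-property} and~\ref{le:composition}; the Neumann series for $(\lambda-A)^{-1}$ converges in the Wiener norm whenever $|\lambda|>\|A\|_{W_{\alpha,p}^{SO}}$, providing the starting point to be propagated around the contour. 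Once $B=\sum_k b_kU_\alpha^k$ is constructed, multiplicativity of the symbol map on $\Delta\times\mR$ forces the symbol of $AB-I$ to vanish, and the uniqueness of $A^{-1}$ in $\cB$ gives $B=A^{-1}\in W_{\alpha,p}^{SO}$.
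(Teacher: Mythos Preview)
The paper does not prove this theorem at all: it is quoted verbatim from \cite[Theorem~7.4]{FTK-BJMA} and used as a black box in the proof of Theorem~\ref{th:one-sided-inverse-closedness}. There is therefore no in-paper argument to compare your proposal against. What can be said is that your outline is in the spirit of the crossed-product methods that underlie \cite{FTK-BJMA}, but as written it has a genuine gap in the second half.

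The problematic step is the holomorphic functional calculus. You want to integrate $(\lambda I-A)^{-1}$ in the Wiener norm along a contour surrounding $\sigma_\cB(A)$ and avoiding $0$. For this you need $(\lambda I-A)^{-1}\in W_{\alpha,p}^{SO}$ with controlled Wiener norm for \emph{every} $\lambda$ on the contour, not just for $|\lambda|>\|A\|_{W_{\alpha,p}^{SO}}$. Knowing this for large $|\lambda|$ and then ``propagating around the contour'' presupposes exactly the spectral equality $\sigma_{W_{\alpha,p}^{SO}}(A)=\sigma_\cB(A)$ that you are trying to establish; nothing in your sketch rules out that the contour meets $\sigma_{W_{\alpha,p}^{SO}}(A)\setminus\sigma_\cB(A)$. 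The amenability of $\mZ$ does lead to a correct argument, but not via contour integration: one needs either Fej\'er-type averaging (Schur multipliers on the coefficient sequence) to approximate the $\cB$-inverse by finitely supported elements with uniform Wiener-norm control, or the Baskakov--Gr\"ochenig machinery for $\ell^1$-crossed products. Your slice-by-slice Wiener--L\'evy step also does not by itself produce $b_k\in SO(\mR_+)$ with a summable supremum norm, since the pointwise Wiener bounds $\sum_k|b_k(\xi)|$ need not be uniform in $\xi$ without an additional argument.

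A second, smaller issue: the symbol map you define is multiplicative only on $\Delta\times\mR$, because for $t\in\mR_+$ the twist $c\mapsto c\circ\alpha$ is nontrivial and $U_\alpha(cI)=(c\circ\alpha)U_\alpha$ genuinely fails to commute. So vanishing of the symbol of $AB-I$ on $\Delta\times\mR$ does not by itself imply $AB=I$; you correctly note that uniqueness of inverses in $\cB$ closes this once $B$ is built, but then the symbol remark is redundant, and the entire weight of the proof rests on the unresolved functional-calculus step above.
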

\subsection{One-sided inverses belonging to the algebra
\boldmath{$W_{\alpha,p}^\fS$}}
A function $f\in L^\infty(\mR_+)$ is said to be essentially slowly oscillating
(at $0$ and $\infty$) if for each (equivalently, for some) $\lambda\in(0,1)$,
\[
\lim_{r\to s}\operatornamewithlimits{ess\,sup}_{t,\tau\in[\lambda r,r]}
|f(t)-f(\tau)|=0,
\quad s\in\{0,\infty\}.
\]

Fix $\alpha\in SOS(\mR_+)$ and $\tau\in\mR_+$. Consider
the semi-segment $\gamma\subset\mR_+$ with the
endpoints $\tau$ and $\alpha(\tau)$ such that $\tau\in\gamma$ and
$\alpha(\tau)\notin\gamma$. Following \cite[Section~3.2]{FTK-BJMA}, let $\fS$
denote the $C^*$-subalgebra of $L^\infty(\mR_+)$ consisting of all functions
on $\mR_+$ that are continuous on every semi-segment $\alpha_n(\gamma)$ with
$n\in\mZ$, have one-sided limits at the points $\alpha_n(\tau)$ for $n\in\mZ$,
and are essentially slowly oscillating at $0$ and $\infty$.
Let $W_{\alpha,p}^\fS$ be the unital Banach algebra of operators of the form
\eqref{eq:operator-series} with $a_k\in\fS$ for all $k\in\mZ$ and the norm
\[
\|A\|_{W_{\alpha,p}^\fS}=\sum_{k\in\mZ}\|a_k\|_{L^\infty(\mR_+)}<\infty.
\]

From \cite[Theorems~6.3--6.4]{FTK-BJMA} we get the following.
\begin{theorem}\label{th:quasi-one-sided-inverse-closedness}
Let $1<p<\infty$, $\alpha\in SOS(\mR_+)$, $a_k\in SO(\mR_+)$ for all $k\in\mZ$,
and
\[
A=\sum_{k\in\mZ}a_k U_\alpha^k\in W_{\alpha,p}^{SO}.
\]
If $A$ is left (resp. right) invertible on $L^p(\mR_+)$, then there exists
a left inverse $A^L$ (resp. right inverse $A^R$) of $A$ such that
$A^L\in W_{\alpha,p}^\fS$ (resp. $A^R\in W_{\alpha,p}^\fS$).
\end{theorem}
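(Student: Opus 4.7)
The plan is to deduce this theorem directly from Theorems 6.3 and 6.4 of \cite{FTK-BJMA} via the inclusion $W_{\alpha,p}^{SO}\subset W_{\alpha,p}^\fS$. The first task is to verify that inclusion at the level of coefficients, i.e., that $SO(\mR_+)\subset\fS$. Given $a\in SO(\mR_+)\subset C_b(\mR_+)$, the function $a$ is continuous on all of $\mR_+$, hence a fortiori continuous on every semi-segment $\alpha_n(\gamma)$ and possessing one-sided limits at each point $\alpha_n(\tau)$ with $n\in\mZ$. What must still be checked is essential slow oscillation at $0$ and $\infty$: for any $\lambda\in(0,1)$, one covers $[\lambda r, r]$ by a bounded number (depending only on $\lambda$) of dyadic subintervals of the form $[r',2r']$ with $r'\to s$ as $r\to s$, and then the Sarason-type slow oscillation on these dyadic intervals forces $\operatorname{ess\,sup}_{t,\tau\in[\lambda r,r]}|a(t)-a(\tau)|\to 0$. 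Since $a$ is continuous, ``ess sup'' coincides with ``sup''.

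Once $SO(\mR_+)\subset\fS$ is in place, the norm definitions of $W_{\alpha,p}^{SO}$ and $W_{\alpha,p}^\fS$ give the continuous inclusion $W_{\alpha,p}^{SO}\subset W_{\alpha,p}^\fS$ (with $\|a_k\|_{L^\infty(\mR_+)}\le\|a_k\|_{C_b(\mR_+)}$ termwise). In particular, the operator $A$ in the statement belongs to $W_{\alpha,p}^\fS$, and by hypothesis it is left (resp. right) invertible on $L^p(\mR_+)$. Then Theorem 6.3 of \cite{FTK-BJMA} (for the left case) and Theorem 6.4 of \cite{FTK-BJMA} (for the right case) apply directly and produce a left inverse $A^L$ (resp. a right inverse $A^R$) that already lies in $W_{\alpha,p}^\fS$. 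This is exactly the conclusion to be proved.

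There is no real obstacle in the present argument: all the substantive work—the Mellin symbol calculus on the orbit of $\alpha$, the one-sided Wiener-type factorization, and the verification that the inverse produced by that factorization retains essentially slowly oscillating, orbit-piecewise-continuous coefficients—is carried out in \cite{FTK-BJMA}. Our only contribution here is the observation that the $SO(\mR_+)$-valued setting of the present paper is embedded in the larger $\fS$-valued setting treated there, so that Theorems 6.3--6.4 of \cite{FTK-BJMA} may be applied without modification. Note that we cannot strengthen the conclusion to $A^L,A^R\in W_{\alpha,p}^{SO}$ itself: the factorization in \cite{FTK-BJMA} naturally introduces jumps along the orbit $\{\alpha_n(\tau)\}_{n\in\mZ}$, and the enlargement to $\fS$ is precisely what is needed to accommodate them.
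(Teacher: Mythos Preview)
Your proof is correct and follows exactly the route the paper takes: the paper states the theorem with the single line ``From \cite[Theorems~6.3--6.4]{FTK-BJMA} we get the following,'' and you have merely made explicit the routine verification of the inclusion $SO(\mR_+)\subset\fS$ (hence $W_{\alpha,p}^{SO}\subset W_{\alpha,p}^\fS$) that the paper leaves implicit.

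One remark on your closing commentary: your claim that ``we cannot strengthen the conclusion to $A^L,A^R\in W_{\alpha,p}^{SO}$ itself'' is misleading as stated. It is true that the particular factorization from \cite{FTK-BJMA} may introduce jumps along the orbit, so \emph{that} construction only yields an inverse in $W_{\alpha,p}^\fS$. But the very next result in the paper (Theorem~\ref{th:one-sided-inverse-closedness}) shows that a one-sided inverse lying in $W_{\alpha,p}^{SO}$ does exist, obtained by a different route: one passes to $L^2(\mR_+)$ via the canonical isomorphism, uses the Hilbert-space formula $A^L=(A^\diamond A)^{-1}A^\diamond$ (resp.\ $A^R=A^\diamond(AA^\diamond)^{-1}$), and then invokes the two-sided inverse closedness of $W_{\alpha,2}^{SO}$ in $\cB(L^2(\mR_+))$. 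So the present theorem is an intermediate step, not an optimal endpoint.
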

\subsection{Weak one-sided inverse closedness of the algebra
\boldmath{$W_{\alpha,p}^{SO}$ in $\cB$}}
We will show that the algebra $W_{\alpha,p}^{SO}$ is weakly left and right
inverse closed in the algebra $\cB$. For every operator
$A\in W_{\alpha,p}^{SO}$ of the form \eqref{eq:operator-series},
define its formally adjoint $A^\diamond$ by
\[
A^\diamond:=
\sum_{k\in\mZ}(\overline{a_k}\circ\alpha_{-k})U_\alpha^{-k}
\in W_{\alpha,p}^{SO}.
\]
\begin{theorem}\label{th:one-sided-inverse-closedness}
Let $1<p<\infty$, $\alpha\in SOS(\mR_+)$, $a_k\in SO(\mR_+)$ for all $k\in\mZ$,
and
\[
A=\sum_{k\in\mZ}a_k U_\alpha^k\in W_{\alpha,p}^{SO}.
\]
\begin{enumerate}
\item[(a)]
If $A$ is left invertible on $L^p(\mR_+)$, then the operator
$A^\diamond A$ is invertible on the space $L^p(\mR_+)$, the
operator $A^L:=(A^\diamond A)^{-1}A^\diamond$
is a left inverse of $A$, and $A^L\in W_{\alpha,p}^{SO}$.

\item[(b)]
If $A$ is right invertible on $L^p(\mR_+)$, then the operator
$AA^\diamond$ is invertible on the space $L^p(\mR_+)$, the
operator $A^R:=A^\diamond(AA^\diamond)^{-1}$
is a right inverse of $A$, and $A^R\in W_{\alpha,p}^{SO}$.
\end{enumerate}
\end{theorem}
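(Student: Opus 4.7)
The plan is to reduce to the Hilbert-space case $L^2(\mR_+)$, where Lemma~\ref{le:Hilbert-one-sided} applies directly, and then transfer the resulting invertibility of $A^\diamond A$ (resp.\ $AA^\diamond$) back to $L^p(\mR_+)$ using the full inverse closedness of Theorem~\ref{th:inverse-closedness}. Two preliminary observations set the stage. First, since $\alpha_{-k}\in SOS(\mR_+)$ by Lemma~\ref{le:SOS-iterations}, Lemma~\ref{le:composition} gives $\overline{a_k}\circ\alpha_{-k}\in SO(\mR_+)$, and since the Wiener norm of $A^\diamond$ coincides with that of $A$, we have $A^\diamond\in W_{\alpha,p}^{SO}$. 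Second, when $p=2$ the weighted shift $U_\alpha$ is unitary on $L^2(\mR_+)$, and a short computation shows that $(a_kU_\alpha^k)^*=(\overline{a_k}\circ\alpha_{-k})U_\alpha^{-k}$; summing yields that on $L^2(\mR_+)$ the formal adjoint $A^\diamond$ coincides with the Hilbert-space adjoint $A^*$.

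For part (a), assume $A$ is left invertible on $L^p(\mR_+)$. Theorem~\ref{th:quasi-one-sided-inverse-closedness} supplies a left inverse $\widetilde{A}^L=\sum_k c_kU_\alpha^k$ in $W_{\alpha,p}^\fS$. The norm $\sum_k\|c_k\|_{L^\infty(\mR_+)}$ defining $W_{\alpha,p}^\fS$ is manifestly independent of $p$, so the same coefficient sequence defines a bounded operator on $L^2(\mR_+)$ with the same Wiener bound. Moreover, the realization map from formal Wiener series to bounded operators is injective (coefficients can be read off by applying the operator to characteristic functions of sufficiently small intervals, whose iterated $\alpha$-preimages are pairwise disjoint), so the operator relation $\widetilde{A}^LA=I$ on $L^p(\mR_+)$ corresponds to a purely algebraic identity of Wiener series, which then also yields $\widetilde{A}^LA=I$ on $L^2(\mR_+)$. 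Thus $A$ is left invertible on $L^2(\mR_+)$.

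Lemma~\ref{le:Hilbert-one-sided}(a) now gives that $A^\diamond A=A^*A$ is invertible on $L^2(\mR_+)$, with $(A^\diamond A)^{-1}A^\diamond$ a left inverse of $A$ there. Because $A^\diamond A\in W_{\alpha,p}^{SO}$ (the Wiener algebra is closed under products), Theorem~\ref{th:inverse-closedness} applied for $p=2$ produces a Wiener series with coefficients in $SO(\mR_+)$ that realizes $(A^\diamond A)^{-1}$ on $L^2(\mR_+)$; the same series realized on $L^p(\mR_+)$ still inverts $A^\diamond A$, since inversion is an algebraic identity at the level of Wiener series. Hence $A^\diamond A$ is invertible on $L^p(\mR_+)$, and $A^L:=(A^\diamond A)^{-1}A^\diamond$ belongs to $W_{\alpha,p}^{SO}$ and satisfies $A^LA=I$ on $L^p(\mR_+)$.

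Part (b) follows from the parallel argument, using the right-invertibility halves of Theorem~\ref{th:quasi-one-sided-inverse-closedness} and Lemma~\ref{le:Hilbert-one-sided}(b); alternatively, one can apply part (a) to $A^\diamond$ and invoke the involution $(A^\diamond)^\diamond=A$. The principal obstacle is the transfer of one-sided invertibility from $L^p(\mR_+)$ to $L^2(\mR_+)$ in the second paragraph: this step would fail for an arbitrary left-invertible operator in $\cB$, and it works here only because Theorem~\ref{th:quasi-one-sided-inverse-closedness} produces a one-sided inverse already inside a Wiener-type algebra whose defining norm does not depend on $p$.
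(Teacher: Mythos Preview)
Your proposal is correct and follows essentially the same route as the paper: transfer to $L^2(\mR_+)$ via the $p$-independence of the Wiener norms (what the paper packages as the canonical isometric isomorphisms $\Psi_\fS$ and $\Psi_{SO}$), apply Lemma~\ref{le:Hilbert-one-sided} together with the identification $A^\diamond=A^*$ on $L^2$, use Theorem~\ref{th:inverse-closedness} for $p=2$, and transfer back. Your explicit remark on injectivity of the realization map and your closing observation that Theorem~\ref{th:quasi-one-sided-inverse-closedness} is what makes the $L^p\to L^2$ transfer possible are exactly the points the paper leaves implicit.
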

\begin{proof}
Along with the operator $U_\alpha$ acting on $L^p(\mR_+)$, consider the
operator $U_{\alpha,2}$ acting on $L^2(\mR_+)$ and defined by the same rule
$U_{\alpha,2}f=(\alpha')^{1/2}(f\circ\alpha)$. Then we can define the canonical
isometric isomorphisms of Banach algebras
\[
\Psi_\fS:W_{\alpha,p}^\fS\to W_{\alpha,2}^\fS,
\quad
\Psi_{SO}:W_{\alpha,p}^{SO}\to W_{\alpha,2}^{SO}
\]
by the formulas
\[
\Psi_\fS\left(\sum_{k\in\mZ}c_k U_\alpha^k\right)=\sum_{k\in\mZ}c_kU_{\alpha,2}^k,
\quad
\Psi_{SO}\left(\sum_{k\in\mZ}c_k U_\alpha^k\right)=\sum_{k\in\mZ}c_kU_{\alpha,2}^k,
\]
respectively.

If $A\in W_{\alpha,p}^{SO}$ is left invertible on the space
$L^p(\mR_+)$, then by Theorem~\ref{th:quasi-one-sided-inverse-closedness},
there exists an operator $\widetilde{A}^L\in W_{\alpha,p}^\fS$ such that
$\widetilde{A}^LA=I$ on $L^p(\mR_+)$. Hence
$\Psi_\fS(\widetilde{A}^L)\Psi_\fS(A)=I$ on $L^2(\mR_+)$. Therefore, the operator
\[
A_2:=\Psi_{SO}(A)=\Psi_\fS(A)\in W_{\alpha,2}^{SO}
\]
is left invertible on $L^2(\mR_+)$. Hence, in view of
Lemma~\ref{le:Hilbert-one-sided}, the operator $A_2^*A_2$ is invertible on
$L^2(\mR_+)$. Observe that $A_2^*\in W_{\alpha,2}^{SO}$ and
$A^\diamond=\Phi_{SO}^{-1}(A_2^*)\in W_{\alpha,p}^{SO}$.
Since $A_2^*A_2\in W_{\alpha,2}^{SO}$, we deduce from the inverse closedness
of the algebra $W_{\alpha,2}^{SO}$ in the algebra $\cB$ (see
Theorem~\ref{th:inverse-closedness}) that $(A_2^*A_2)^{-1}\in W_{\alpha,2}^{SO}$.
Then  $\Psi_{SO}^{-1}((A_2^*A_2)^{-1})\in W_{\alpha,p}^{SO}$. Now it is easy
to check that
\[
(A^\diamond A)^{-1}=\Psi_{SO}^{-1}((A_2^*A_2)^{-1}).
\]
Hence $A^L=(A^\diamond A)^{-1}A$ is a left inverse to $A$. Part (a) is proved.

(b) The proof of part (b) is reduced to the previous one by passing
to adjoint operators.
\end{proof}
\section{Algebra \boldmath{$\cA$} of singular integral operators}
\label{sec:algebra-A}
\subsection{Fourier and Mellin convolution operators}
Let $\cF:L^2(\mR)\to L^2(\mR)$ denote the Fourier transform,
\[
(\cF f)(x):=\int_\mR f(y)e^{-ixy}dy,\quad x\in\mR,
\]
and let $\cF^{-1}:L^2(\mR)\to L^2(\mR)$ be the inverse of $\cF$. A function
$a\in L^\infty(\mR)$ is called a Fourier multiplier on $L^p(\mR)$ if the
mapping $f\mapsto \cF^{-1}a\cF f$ maps $L^2(\mR)\cap L^p(\mR)$ into itself and
extends to a bounded operator on $L^p(\mR)$. The latter operator is then
denoted by $W^0(a)$. We let $\cM_p(\mR)$ stand for the set of all Fourier
multipliers on $L^p(\mR)$. One can show that $\cM_p(\mR)$ is a Banach algebra
under the norm
\[
\|a\|_{\cM_p(\mR)}:=\|W^0(a)\|_{\cB(L^p(\mR))}.
\]

Let $d\mu(t)=dt/t$ be the (normalized) invariant measure on $\mR_+$.
Consider the Fourier transform on $L^2(\mR_+,d\mu)$, which is
usually referred to as the Mellin transform and is defined by
\[
\cM:L^2(\mR_+,d\mu)\to L^2(\mR),
\quad
(\cM f)(x):=\int_{\mR_+} f(t) t^{-ix}\,\frac{dt}{t}.
\]
This operator is invertible, with inverse given by
\[
{\cM^{-1}}:L^2(\mR)\to L^2(\mR_{+},d\mu),
\quad
({\cM^{-1}}g)(t)= \frac{1}{2\pi}\int_{\mR}
g(x)t^{ix}\,dx.
\]
Let $E$ be the isometric isomorphism
\begin{equation}\label{eq:def-E}
E:L^p(\mR_+,d\mu)\to L^p(\mR),
\quad
(Ef)(x):=f(e^x),\quad x\in\mR.
\end{equation}
Then the map
\begin{equation}\label{eq:R-to-R+}
A\mapsto E^{-1}AE
\end{equation}
transforms the Fourier convolution operator $W^0(a)=\cF^{-1}a\cF$ to the
Mellin convolution operator
\[
\Co(a):=\cM^{-1}a\cM
\]
with the same symbol $a$. Hence the class of Fourier multipliers on
$L^p(\mR)$ coincides with the class of Mellin multipliers on $L^p(\mR_+,d\mu)$.
\subsection{Continuous and piecewise continuous multipliers}
We denote by $PC$ the $C^*$-algebra of all bounded piecewise continuous
functions on $\dot{\mR}=\mR\cup\{\infty\}$. By definition, $a\in PC$ if
and only if $a\in L^\infty(\mR)$ and the one-sided limits
\[
a(x_0-0):=\lim_{x\to x_0-0}a(x),
\quad
a(x_0+0):=\lim_{x\to x_0+0}a(x)
\]
exist for each $x_0\in\dot{\mR}$. If a function $a$ is given everywhere
on $\mR$, then its total variation is defined by
\[
V(a):=\sup\sum_{k=1}^n|a(x_k)-a(x_{k-1})|,
\]
where the supremum is taken over all $n\in\mN$ and
\[
-\infty<x_0<x_1<\dots<x_n<+\infty.
\]
If $a$ has a finite total variation, then it has finite one-sided limits
$a(x-0)$ and $a(x+0)$ for all $x\in\dot{\mR}$, that is, $a\in PC$
(see, e.g., \cite[Chap. VIII, Sections~3 and~9]{N55}). The following
theorem gives an important subset of $\cM_p(\mR)$. Its proof can be found,
e.g., in \cite[Theorem~17.1]{BKS02} or \cite[Theorem~2.11]{D79}.
\begin{theorem}[Stechkin's inequality]
\label{th:Stechkin}
If $a\in PC$ has finite total variation $V(a)$, then $a\in\cM_p(\mR)$ and
\[
\|a\|_{\cM_p(\mR)}\le\|S_\mR\|_{\cB(L^p(\mR))}\big(\|a\|_{L^\infty(\mR)}+V(a)\big),
\]
where $S_\mR$ is the Cauchy singular integral operator on $\mR$.
\end{theorem}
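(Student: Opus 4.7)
The plan is to express the symbol $a$ as a superposition of Heaviside-type multipliers and then invoke the fact that each such multiplier is, up to an isometric modulation, unitarily equivalent to $(I+S_\mR)/2$ on $L^p(\mR)$.

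First, finite total variation forces both one-sided limits $a(x\pm 0)$ to exist everywhere on $\dot{\mR}$; replacing $a$ by its right-continuous modification alters neither $\|a\|_{L^\infty(\mR)}$ nor $V(a)$, so I would pass to that modification. Then there is a unique finite complex Borel measure $\mu_a$ on $\mR$ with $|\mu_a|(\mR)=V(a)$ satisfying $a(x)=a(-\infty)+\mu_a((-\infty,x])$ for all $x\in\mR$. Rewriting via $\chi_{[t,\infty)}(x)=\frac{1}{2}(1+\mathrm{sgn}(x-t))$ gives
\[
a(x)=\frac{a(-\infty)+a(+\infty)}{2}+\frac{1}{2}\int_\mR \mathrm{sgn}(x-t)\,d\mu_a(t).
\]

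Next I would verify the identity $\|W^0(\mathrm{sgn}(\cdot-t))\|_{\cB(L^p(\mR))}=\|S_\mR\|_{\cB(L^p(\mR))}$ for every $t\in\mR$. With the sign convention of \eqref{eq:def-S}, a direct computation shows that $S_\mR$ has Fourier symbol $\mathrm{sgn}$, i.e.\ $W^0(\mathrm{sgn})=S_\mR$, and a change of variables in the Fourier integral yields $W^0(\mathrm{sgn}(\cdot-t))=E_t\,S_\mR\,E_{-t}$, where $E_t$ is multiplication by the modulus-one function $e^{itx}$ and hence an isometry on $L^p(\mR)$. Applying $W^0$ to the above representation of $a$---first for the case when $\mu_a$ is a finite atomic measure (so the formula is a finite linear combination) and then passing to the limit---yields
\[
\|W^0(a)\|_{\cB(L^p(\mR))}\le\frac{|a(-\infty)+a(+\infty)|}{2}+\frac{1}{2}\|S_\mR\|_{\cB(L^p(\mR))}V(a)\le\|a\|_{L^\infty(\mR)}+\|S_\mR\|_{\cB(L^p(\mR))}V(a),
\]
which gives the stated bound since $S_\mR^2=I$ forces $\|S_\mR\|_{\cB(L^p(\mR))}\ge 1$.

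The main technical point is the justification of the passage to the limit when extending from piecewise-constant symbols with finitely many jumps to arbitrary $a$ of bounded variation. The standard device is to approximate $a$ pointwise by step functions $a_n$ with $\|a_n\|_{L^\infty(\mR)}\le\|a\|_{L^\infty(\mR)}$ and $V(a_n)\le V(a)$; since $W^0(a_n)f\to W^0(a)f$ for Schwartz $f$ by dominated convergence in the Fourier picture, the uniform bound obtained in the finite-jump case survives via a density argument. With this in hand, the estimate extends to all $a$ of finite total variation, completing the proof.
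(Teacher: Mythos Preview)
Your argument is correct and is precisely the classical proof of Stechkin's inequality: represent $a$ as a Stieltjes integral of shifted sign functions, identify each $W^0(\mathrm{sgn}(\cdot-t))$ with a modulation-conjugate of $S_\mR$, estimate for step functions, and pass to the limit. The paper does not give its own proof of this theorem at all; it simply quotes the result and refers the reader to \cite[Theorem~17.1]{BKS02} and \cite[Theorem~2.11]{D79}, where exactly this argument is carried out. So your approach coincides with the one in the cited sources. One small bonus: the intermediate bound you obtain,
\[
\|W^0(a)\|_{\cB(L^p(\mR))}\le\tfrac12\big|a(-\infty)+a(+\infty)\big|+\tfrac12\|S_\mR\|_{\cB(L^p(\mR))}V(a),
\]
is in fact sharper than the stated inequality; the statement in the paper (and in the references) is the cruder but more symmetric version obtained after invoking $\|S_\mR\|_{\cB(L^p(\mR))}\ge 1$.
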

According to \cite{D79} or \cite[p.~325]{BKS02}, let $PC_p$ be the closure in
$\cM_p(\mR)$ of the set of all functions $a\in PC$ with finite total variation
on $\mR$. Following \cite[p.~331]{BKS02}, put
\[
C_p(\overline{\mR}):=PC_p\cap C(\mR),
\]
where $\overline{\mR}:=[-\infty,+\infty]$. It is easy to see that $PC_p$ and
$C_p(\overline{\mR})$ are Banach algebras.
\subsection{Maximal ideal space of the algebras \boldmath{$\cA$ and $\cA^\pi$}}
Let $\mathfrak{A}$ be a Banach algebra and $\mathfrak{C}$ be a subset of
$\mathfrak{A}$. Following \cite[Section~3.45]{BS06}, we denote  by
$\alg_\mathfrak{A}\mathfrak{C}$ the smallest closed subalgebra of
$\mathfrak{A}$ containing $\mathfrak{C}$
and by $\id_\mathfrak{A}\mathfrak{C}$ the smallest closed
two-sided ideal of $\mathfrak{A}$ containing $\mathfrak{C}$.

Put $\cA:=\alg_\cB\{I,S_0\}$. Obviously, the algebra $\cA$ is commutative.
Consider the isometric isomorphism
\[
\Phi:L^p(\mR_+)\to L^p(\mR_+,d\mu),
\quad
(\Phi f)(t):=t^{1/p}f(t),
\quad t\in\mR_+.
\]

The following result is well known. It is essentially due to Duduchava
\cite{D79,D87} and Simonenko, Chin Ngok Minh \cite{SCNM86}. It can be found
with a proof in \cite[Section~1.10.2]{DS08}, \cite[Section~2.1.2]{HRS94},
\cite[Sections~4.2.2-4.2.3]{RSS11}.
\begin{theorem}\label{th:algebra-A}
\begin{enumerate}
\item[{\rm (a)}]
The algebra $\cA$ is the smallest closed subalgebra of $\cB$ that contains
the operators $\Phi^{-1}\Co(a)\Phi$ with $a\in C_p(\overline{\mR})$.

\item[{\rm (b)}]
The maximal ideal space of the commutative Banach algebra $\cA$ is homeomorphic
to $\overline{\mR}$. In particular, the operator $\Phi^{-1}\Co(a)\Phi$
with $a\in C_p(\overline{\mR})$ is invertible if and only if $a(x)\ne 0$ for
all $x\in\overline{\mR}$. Thus $\cA$ is an inverse closed subalgebra of $\cB$.

\item[{\rm (c)}]
The operator $\Phi^{-1}\Co(a)\Phi$ with $a\in C_p(\overline{\mR})$
belongs to $\id_\cA\{R_0\}$ if and only if $a(-\infty)=a(+\infty)=0$.

\item[{\rm (d)}]
If $\gamma\in\mC$ satisfies \eqref{eq:gamma-condition}, then the function
$s_\gamma$ given by \eqref{eq:def-p-gamma-pm} and the function $r_\gamma$
defined by
\[
r_\gamma(x):=1/\sinh[\pi(x+i/p+i\gamma)], \quad x\in\mR,
\]
belong to $C_p(\overline{\mR})$ and
\[
S_\gamma=\Phi^{-1}\Co(s_\gamma)\Phi,
\quad
R_\gamma=\Phi^{-1}\Co(r_\gamma)\Phi.
\]
\end{enumerate}
\end{theorem}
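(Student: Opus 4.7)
My plan is to establish the four parts in the order (d), (a), (b), (c), because (d) provides the concrete identification of $S_0$ as a Mellin convolution needed to start (a), and parts (b) and (c) follow from (a) by general Banach-algebra reasoning. I would prove (d) by direct computation: conjugating $S_\gamma$ by the weighting isometry $\Phi$ absorbs the $t^{1/p}$ factor and makes the operator act on $L^p(\mR_+,d\mu)$; composing further with the isometry $E$ from \eqref{eq:def-E} turns the kernel $(t/\tau)^\gamma/(\tau-t)$, after the substitution $\tau=e^v$, $t=e^u$, into a translation-invariant kernel on $\mR$ whose Fourier symbol is computed by a standard contour/residue calculation to be $\coth[\pi(x+i/p+i\gamma)]$. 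The analogous calculation with the kernel $(t/\tau)^\gamma/(\tau+t)$ yields $r_\gamma$ for $R_\gamma$. Both symbols are $C^1$ on $\mR$ with exponentially decaying derivatives and have finite one-sided limits at $\pm\infty$, so they have finite total variation; Stechkin's inequality (Theorem~\ref{th:Stechkin}) then places them in $C_p(\overline{\mR})$.

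For part (a), one inclusion is immediate from (d) at $\gamma=0$, giving $S_0\in\{\Phi^{-1}\Co(a)\Phi:a\in C_p(\overline{\mR})\}$ and hence $\cA\subset\alg_\cB\{\Phi^{-1}\Co(a)\Phi:a\in C_p(\overline{\mR})\}$. For the reverse inclusion I would show that the closed subalgebra of $C_p(\overline{\mR})$ generated by $s_0$ and the constant $1$ is all of $C_p(\overline{\mR})$. The image $s_0(\overline{\mR})\subset\mC$ is a simple arc joining $-1$ to $+1$; since such an arc has empty interior and connected complement, Mergelyan's theorem gives uniform density of polynomials in $s_0$ inside $C(\overline{\mR})$. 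I would then upgrade uniform density to density in the stronger multiplier topology using Stechkin's inequality applied to piecewise-linear interpolants of $s_0$, whose total variations are controlled by the total variation of $s_0$. Part (b) then follows: the symbol map $a\mapsto\Phi^{-1}\Co(a)\Phi$ is a continuous Banach-algebra homomorphism $C_p(\overline{\mR})\to\cA$, surjective by (a), and injective because the Mellin transform is injective; hence $\cA\cong C_p(\overline{\mR})$ as Banach algebras, its maximal ideal space is that of $C_p(\overline{\mR})$, which is $\overline{\mR}$ with the evaluation characters $a\mapsto a(x_0)$, and inverse closedness of $\cA$ in $\cB$ reduces to the classical inverse closedness of the multiplier algebra $C_p(\overline{\mR})$ in $C(\overline{\mR})$.

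For part (c), the isomorphism from (b) identifies $\id_\cA\{R_0\}$ with the closed ideal of $C_p(\overline{\mR})$ generated by $r_0$. Since $r_0$ is nonzero on $\mR$ and $r_0(\pm\infty)=0$, the ideal is contained in $J:=\{a\in C_p(\overline{\mR}):a(\pm\infty)=0\}$. For the reverse inclusion, given $a\in J$ I would factor $a=r_0\cdot(a/r_0)$ after multiplying by a smooth cutoff supported in $[-n,n]$, verify that the quotient belongs to $C_p(\overline{\mR})$ with controlled norm (using the nonvanishing of $r_0$ on $[-n,n]$ and Stechkin's estimate), and pass to the limit $n\to\infty$ using the decay of $a$ at $\pm\infty$. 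The main obstacle in the whole program is the upgrading step in part (a): Mergelyan gives density only in the uniform norm, whereas $\cA$ carries the strictly finer $\cM_p$ topology, and controlling the total variation of the approximating polynomials in $s_0$ (rather than just their sup norms) is where careful quantitative estimates are unavoidable.
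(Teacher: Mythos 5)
A preliminary remark: the paper does not prove Theorem~\ref{th:algebra-A} at all; it records it as a known result, essentially due to Duduchava and Simonenko--Chin Ngok Minh, and points to the books of Didenko--Silbermann, Hagen--Roch--Silbermann, and Roch--Santos--Silbermann for complete proofs. So your proposal is measured against those standard proofs rather than against anything in the text. Your part (d) (conjugation by $\Phi$ and $E$, the residue computation of the symbols, Stechkin's inequality to place $s_\gamma,r_\gamma$ in $C_p(\overline{\mR})$), your part (c), and the formal deductions in part (b) (isometry of $a\mapsto\Phi^{-1}\Co(a)\Phi$, identification of characters with evaluations) all follow the same lines as the literature and are sound in outline --- except that the inverse closedness of $C_p(\overline{\mR})$ in $C(\overline{\mR})$, which you invoke as classical, itself requires an argument of exactly the kind discussed below (for finite total variation one has $V(1/a)\le V(a)/(\inf|a|)^2$, but passing to the closure needs more than Stechkin).

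The genuine gap is the one you flagged in part (a), and your proposed repair does not close it. Stechkin's inequality (Theorem~\ref{th:Stechkin}) estimates $\|b\|_{\cM_p(\mR)}$ by $\|b\|_{L^\infty(\mR)}+V(b)$, so to conclude $\|P_n(s_0)-a\|_{\cM_p(\mR)}\to 0$ from it you would need $V(P_n(s_0)-a)\to 0$; Mergelyan controls only the sup-norm, and $V(P_n(s_0))=\int_\mR|P_n'(s_0(x))|\,|s_0'(x)|\,dx$ can grow without bound because Mergelyan gives no control of $P_n'$ on the arc. Piecewise-linear interpolants ``of $s_0$'' are beside the point: the object to be approximated is an arbitrary $a\in C_p(\overline{\mR})$, not $s_0$, and bounded variation of the approximants alone cannot be fed into Stechkin, which needs smallness of the variation of the \emph{error}. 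The missing key lemma --- and it is how the cited proofs actually proceed --- is interpolation: since $\cM_2(\mR)=L^\infty(\mR)$, Riesz--Thorin yields $\|b\|_{\cM_p(\mR)}\le\|b\|_{L^\infty(\mR)}^{1-\theta}\,\|b\|_{\cM_{p_0}(\mR)}^{\theta}$ for $p$ strictly between $2$ and $p_0$ and the corresponding $\theta\in(0,1)$; combined with Stechkin's bound in $\cM_{p_0}$, this shows that uniform convergence together with uniformly bounded total variations already implies $\cM_p$-convergence. With that lemma your scheme can be completed: approximate $a$ first within a dense subclass on which derivatives are controlled (for instance, $C^1$ functions of $s_0$, using Jackson-type simultaneous approximation of $f$ and $f'$ on the analytic arc $s_0(\overline{\mR})$, so that even $V(P_n(s_0)-a)\to 0$; or mollified functions of finite total variation), and then pass to general $a\in C_p(\overline{\mR})$ using the very definition of $C_p(\overline{\mR})$ as a closure. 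Without the interpolation inequality, the Mergelyan route stalls precisely at the step you yourself identified as the main obstacle.
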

Let us describe the quotient algebra
\[
\cA^\pi=(\cA+\cK)/\cK.
\]
By \cite[Proposition~4.2.14]{RSS11}, a Mellin convolution operator is Fredholm
on the space $L^p(\mR_+,d\mu)$ if and only if it is invertible on this space.
Hence, Theorem~\ref{th:algebra-A} implies the following.
\begin{corollary}\label{co:algebra-Api}
\begin{enumerate}
\item[{\rm (a)}]
The algebra $\cA^\pi$ is commutative and its maximal ideal space is
homeomorphic to $\overline{\mR}$.

\item[{\rm (b)}]
The Gelfand transform of the coset $(\Phi^{-1}\Co(a)\Phi)^\pi\in\cA^\pi$
for $a\in C_p(\overline{\mR})$ is given by
\[
\big[(\Phi^{-1}\Co(a)\Phi)^\pi\big]
\widehat{\hspace{2mm}}(x)=a(x)\quad\mbox{for}\quad x\in\overline{\mR}.
\]
\end{enumerate}
\end{corollary}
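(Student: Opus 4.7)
The plan is to reduce both parts of the corollary to Theorem~\ref{th:algebra-A} by showing that the canonical map $\cA \to \cA^\pi$, $A \mapsto A + \cK$, is a Banach-algebra isomorphism. This map is a continuous surjective algebra homomorphism by the very definition $\cA^\pi = (\cA+\cK)/\cK$, so the key step is establishing injectivity, that is, $\cA \cap \cK = \{0\}$.

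To handle this key step I would argue as follows. Suppose $A = \Phi^{-1}\Co(a)\Phi$ with $a \in C_p(\overline{\mR})$ is compact. Then for every $t \in \mC \setminus \{0\}$, the operator $tI + A = \Phi^{-1}\Co(t + a)\Phi$ is a compact perturbation of the invertible operator $tI$, hence Fredholm on $L^p(\mR_+)$, and equivalently $\Co(t+a)$ is Fredholm on $L^p(\mR_+, d\mu)$. By \cite[Proposition~4.2.14]{RSS11}, Fredholmness of a Mellin convolution on $L^p(\mR_+,d\mu)$ is equivalent to invertibility, so $\Co(t+a)$ is invertible, and Theorem~\ref{th:algebra-A}(b) then forces $t + a(x) \ne 0$ for every $x \in \overline{\mR}$. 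Letting $t$ range over $\mC \setminus \{0\}$, we conclude that $a(x) = 0$ for every $x \in \overline{\mR}$, whence $a \equiv 0$ and $A = 0$.

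Granted this injectivity, the open mapping theorem upgrades $\cA \to \cA^\pi$ to a topological isomorphism of unital commutative Banach algebras. Part (a) follows: commutativity of $\cA^\pi$ is inherited from $\cA$, and $M(\cA^\pi)$ is homeomorphic to $M(\cA)$, which by Theorem~\ref{th:algebra-A}(b) is $\overline{\mR}$. Part (b) transfers in the same way: under the identification $M(\cA) = \overline{\mR}$, the character at $x \in \overline{\mR}$ sends $\Phi^{-1}\Co(a)\Phi$ to $a(x)$, which is the Gelfand-transform reading of the invertibility criterion in Theorem~\ref{th:algebra-A}(b). The corresponding character on $\cA^\pi$ then sends $(\Phi^{-1}\Co(a)\Phi)^\pi$ to $a(x)$, giving the formula claimed.

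The only nontrivial point is the verification of $\cA \cap \cK = \{0\}$; the remainder is standard Gelfand theory plus transport of information along the Banach-algebra isomorphism. The Fredholm-equals-invertible property of Mellin convolutions, namely \cite[Proposition~4.2.14]{RSS11}, is what does the real work, converting the compactness hypothesis into a pointwise statement about the symbol $a$.
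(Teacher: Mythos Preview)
Your proof is correct and follows essentially the same route as the paper, which also derives the corollary from Theorem~\ref{th:algebra-A} via \cite[Proposition~4.2.14]{RSS11}; you simply make the isomorphism $\cA\to\cA^\pi$ explicit. One small point left implicit: in verifying $\cA\cap\cK=\{0\}$ you only test operators of the form $\Phi^{-1}\Co(a)\Phi$, but this suffices because every element of $\cA$ has this form---the set $\{\Phi^{-1}\Co(a)\Phi : a\in C_p(\overline{\mR})\}$ is already a closed subalgebra of $\cB$, as $C_p(\overline{\mR})$ is a Banach algebra and $a\mapsto\Phi^{-1}\Co(a)\Phi$ is an isometry.
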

\subsection{Some operator relations}
\begin{lemma}
[{\cite[Lemma~2.4]{K15}, \cite[Lemma~4.2]{KKL-JIEA}}]
\label{le:PR-relations}
Let $1<p<\infty$ and $\gamma,\delta\in\mC$ be such that $0<1/p+\Re\gamma<1$
and $0<1/p+\Re\delta<1$. Then
\[
P_\delta^+-P_\gamma^+
=
P_\gamma^--P_\delta^-
=
\frac{1}{2}\sinh[\pi i(\gamma-\delta)]R_\gamma R_\delta,
\quad
P_\gamma^- P_\delta^+
=
-\frac{e^{i\pi(\delta-\gamma)}}{4}R_\gamma R_\delta.
\]
\end{lemma}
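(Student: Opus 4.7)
The plan is to transport everything to the Mellin symbol algebra. By Theorem~\ref{th:algebra-A}(d), one has $S_\gamma=\Phi^{-1}\Co(s_\gamma)\Phi$ and $R_\gamma=\Phi^{-1}\Co(r_\gamma)\Phi$, and similarly with $\delta$ in place of $\gamma$; hence $P_\gamma^\pm=\Phi^{-1}\Co(p_\gamma^\pm)\Phi$ with $p_\gamma^\pm:=(1\pm s_\gamma)/2$. Since the functions $s_\gamma,s_\delta,r_\gamma,r_\delta$ all lie in $C_p(\overline{\mR})$, which is a Banach subalgebra of $\cM_p(\mR)$, the map $a\mapsto\Phi^{-1}\Co(a)\Phi$ is an algebra homomorphism on $C_p(\overline{\mR})$. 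Consequently both claimed operator identities reduce, after conjugation by $\Phi$, to pointwise identities between scalar functions on $\mR$, which can be verified by elementary hyperbolic-function algebra.

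For the first identity the target at the symbol level is
\[
\frac{1}{2}\bigl(s_\delta(x)-s_\gamma(x)\bigr)=\frac{1}{2}\sinh[\pi i(\gamma-\delta)]\,r_\gamma(x)r_\delta(x).
\]
Setting $A:=\pi(x+i/p+i\gamma)$ and $B:=\pi(x+i/p+i\delta)$, so that $A-B=\pi i(\gamma-\delta)$, this is precisely the classical identity $\coth B-\coth A=\sinh(A-B)/(\sinh A\sinh B)$. The middle equality $P_\gamma^--P_\delta^-=P_\delta^+-P_\gamma^+$ is immediate from $P_\mu^++P_\mu^-=I$.

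For the second identity I would write, at the symbol level,
\[
p_\gamma^-(x)p_\delta^+(x)=\frac{(1-s_\gamma)(1+s_\delta)}{4}=\frac{(1-s_\gamma s_\delta)+(s_\delta-s_\gamma)}{4}.
\]
With the same $A,B$, the standard identity $1-\coth A\coth B=-\cosh(A-B)/(\sinh A\sinh B)$ yields $1-s_\gamma s_\delta=-\cosh[\pi i(\gamma-\delta)]\,r_\gamma r_\delta$, while the previous step supplies $s_\delta-s_\gamma=\sinh[\pi i(\gamma-\delta)]\,r_\gamma r_\delta$. Combining these and invoking $-\cosh z+\sinh z=-e^{-z}$ with $z=\pi i(\gamma-\delta)$ gives $p_\gamma^-p_\delta^+=-e^{i\pi(\delta-\gamma)}r_\gamma r_\delta/4$, which is the desired relation once $\Phi^{-1}\Co(\cdot)\Phi$ is applied.

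There is no serious obstacle here; the entire argument is symbol bookkeeping. The only genuinely substantive point that must be invoked is the multiplicativity of $a\mapsto\Co(a)$ on the symbols in question, which is guaranteed by Theorem~\ref{th:algebra-A} together with the fact that $C_p(\overline{\mR})$ is a Banach subalgebra of $\cM_p(\mR)$, so that operator composition on the $L^p(\mR_+)$ side corresponds precisely to pointwise multiplication on the symbol side.
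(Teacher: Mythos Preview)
Your argument is correct. The reduction to symbol identities via Theorem~\ref{th:algebra-A}(d) and the multiplicativity of $a\mapsto\Co(a)$ on $C_p(\overline{\mR})\subset\cM_p(\mR)$ is exactly the right move, and your hyperbolic computations check out: with $A=\pi(x+i/p+i\gamma)$ and $B=\pi(x+i/p+i\delta)$ one indeed has $\coth B-\coth A=\sinh(A-B)/(\sinh A\sinh B)$ and $1-\coth A\coth B=-\cosh(A-B)/(\sinh A\sinh B)$, from which both relations follow as you wrote.

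As for comparison with the paper: there is nothing to compare, since the paper does not prove this lemma at all but merely cites it from \cite[Lemma~2.4]{K15} and \cite[Lemma~4.2]{KKL-JIEA}. Your proof is precisely the kind of symbol-level verification those references carry out, so you have in effect reconstructed the omitted argument.
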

\subsection{Compactness of commutators of singular integral operators and\\
functional operators}
Fix $\alpha,\beta\in SOS(\mR_+)$ and consider the Banach algebra of functional
operators with shifts and slowly oscillating data defined by
\[
\mathcal{FO}_{\alpha,\beta}:=
\alg_\cB\{U_\alpha,U_\alpha^{-1},U_\beta,U_\beta^{-1},cI\ :\ c\in SO(\mR_+)\}.
\]
\begin{lemma}[{\cite[Lemma~2.8]{KKL16}}]
\label{le:compactness-commutators}
Let $\alpha,\beta\in SOS(\mR_+)$. If $A\in\mathcal{FO}_{\alpha,\beta}$ and
$B\in\cA$, then $AB\simeq BA$.
\end{lemma}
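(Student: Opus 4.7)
The plan is to reduce compactness of the commutator to a check on generators via a standard closed-subalgebra argument. Define
\[
\mathcal{M}:=\{T\in\cB:\ TS_0-S_0T\in\cK\}.
\]
First I would verify that $\mathcal{M}$ is a closed subalgebra of $\cB$: it is closed under linear combinations and products (since $\cK$ is a two-sided ideal, the Leibniz-type identity
$[T_1T_2,S_0]=T_1[T_2,S_0]+[T_1,S_0]T_2$
keeps us inside $\mathcal{M}$), and it is closed in the operator norm because if $T_n\to T$ with each $[T_n,S_0]\in\cK$, then $[T,S_0]\in\cK$. Since $\cA=\alg_\cB\{I,S_0\}$, any $T\in\mathcal{M}$ automatically satisfies $TB-BT\in\cK$ for every polynomial in $I,S_0$, and by norm continuity for every $B\in\cA$. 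So it suffices to show
\[
\mathcal{FO}_{\alpha,\beta}\subset\mathcal{M}.
\]

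Since $\mathcal{M}$ is a closed subalgebra, and $\mathcal{FO}_{\alpha,\beta}$ is by definition the smallest closed subalgebra containing the generators $U_\alpha^{\pm1}$, $U_\beta^{\pm1}$, and $cI$ with $c\in SO(\mR_+)$, it suffices to prove that each of these generators lies in $\mathcal{M}$. That is, I would prove the two compactness facts
\begin{equation*}
[cI,S_0]\in\cK\ \text{for all}\ c\in SO(\mR_+),
\qquad
[U_\alpha^{\pm1},S_0]\in\cK\ \text{for all}\ \alpha\in SOS(\mR_+),
\end{equation*}
and analogously for $\beta$. Here the transfer to the Mellin picture via the isometry $\Phi$ from Theorem~\ref{th:algebra-A} is the right tool: $S_0=\Phi^{-1}\Co(s_0)\Phi$ with $s_0\in C_p(\overline{\mR})$, and the analysis then takes place on $L^p(\mR_+,d\mu)$ where $U_\alpha$ acts as a weighted composition by a slowly oscillating diffeomorphism.

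For the first commutator, the key is that $c\in SO(\mR_+)$ has oscillation tending to zero on dyadic intervals $[r,2r]$ as $r\to0$ or $\infty$; this is the mechanism that makes $[cI,\Co(s_0)]$ compact on $L^p(\mR_+,d\mu)$, even though $c$ need not have limits at $0$ or $\infty$. For the second commutator, the defining property of $SOS(\mR_+)$ — namely $\alpha(t)=te^{\omega(t)}$ with $\omega\in SO(\mR_+)\cap C^1(\mR_+)$ and $t\omega'(t)\in SO(\mR_+)$ — implies that $U_\alpha$ is, modulo a compact error, close to commuting with Mellin convolutions whose symbol has bounded variation, because the shift $\alpha$ is asymptotically multiplicative. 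In both cases the argument can be organized along the lines of standard localization/commutator estimates for singular integral operators with slowly oscillating data.

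The main obstacle is, as usual in this circle of ideas, the two generator compactness checks: $SO(\mR_+)$ is strictly larger than $C(\overline{\mR}_+)$, so the classical Krupnik-type compactness of $[cI,S_0]$ does not apply directly and one must exploit the oscillation control on dyadic intervals provided by membership in $SO(\mR_+)$; similarly, because $\alpha$ need not extend continuously to the compactification, the compactness of $[U_\alpha,S_0]$ requires the full strength of the $SOS(\mR_+)$ hypothesis on $\alpha'$. Once these two commutator estimates are established (as they are in the references cited in the statement of the lemma), the subalgebra argument above closes the proof for arbitrary $A\in\mathcal{FO}_{\alpha,\beta}$ and $B\in\cA$.
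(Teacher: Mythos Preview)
The paper does not prove this lemma; it is quoted verbatim from \cite[Lemma~2.8]{KKL16} with no argument given here. Your reduction is correct and is the standard route: the closed-subalgebra/Leibniz argument cleanly reduces the claim to the two generator facts $[cI,S_0]\in\cK$ for $c\in SO(\mR_+)$ and $[U_\mu,S_0]\in\cK$ for $\mu\in\{\alpha,\beta\}\subset SOS(\mR_+)$, and you correctly identify these as the substantive content that genuinely needs the slow-oscillation hypotheses. Since the present paper defers the entire proof to the cited reference, there is no in-paper argument to compare against; your outline matches how such results are typically established in \cite{KKL11a,KKL16}.
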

\section{Allan-Douglas localization}
\label{sec:localization}
\subsection{The Allan-Douglas local principle}
Let $\fA$ be a Banach algebra with identity. A subalgebra $\fZ$ of $\fA$ is
said to be a central subalgebra of $\fA$ if $za=az$ for all $z\in\fZ$ and all
$a\in\fA$.

The proof of the following result is contained,  e.g., in
\cite[Theorem~1.35(a)]{BS06}.
\begin{theorem}[Allan-Douglas]\label{th:Allan-Douglas}
Let $\fA$ be a Banach algebra with identity $e$ and let $\fZ$
be a closed central subalgebra of $\fA$ containing
$e$. Let $M(\fZ)$ be the maximal ideal space of $\fZ$, and for
$\omega\in M(\fZ)$, let $\fJ_\omega$ refer to the smallest closed
two-sided ideal of $\fA$ containing the ideal $\omega$. Then an
element $a$ is left (resp., right, two-sided) invertible in $\fA$
if and only if $a+\fJ_\omega$ is left (resp., right, two-sided)
invertible in the quotient algebra $\fA/\fJ_\omega$ for all
$\omega\in M(\fZ)$.
\end{theorem}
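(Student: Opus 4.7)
The forward direction is routine: if $ba = e$ (respectively $ab = e$, or both) in $\fA$, then passing to the quotient $\fA/\fJ_\omega$ preserves the identity, yielding $(b + \fJ_\omega)(a + \fJ_\omega) = e + \fJ_\omega$ (and the symmetric identity), for every $\omega \in M(\fZ)$. I would dispose of all three cases in a single sentence. The content is the reverse direction, and I would handle left invertibility explicitly; right invertibility follows by the symmetric argument, and two-sided invertibility is their conjunction.

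For sufficiency, the fundamental identity to exploit is that every $z \in \fZ$ reduces to a scalar multiple of the identity in each quotient: since $z - \omega(z) e$ lies in the maximal ideal $\omega \subset \fJ_\omega$, one has $z + \fJ_\omega = \omega(z)(e + \fJ_\omega)$. Assuming $a + \fJ_\omega$ is left invertible for every $\omega$, I would pick $b_\omega \in \fA$ with $b_\omega a - e \in \fJ_\omega$. Centrality of $\fZ$ gives $\fJ_\omega = \overline{\{\sum_i z_i c_i : z_i \in \omega,\ c_i \in \fA\}}$, so $b_\omega a - e$ is approximable to within $1/4$ by a finite sum $\sum_i z_{\omega,i} c_{\omega,i}$ with $z_{\omega,i} \in \omega$. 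Using the scalar-reduction identity together with continuity of the Gelfand transforms of the finitely many $z_{\omega,i}$'s on the compact space $M(\fZ)$, I would extract an open neighborhood $V_\omega$ of $\omega$ throughout which $\|b_\omega a - e + \fJ_{\omega'}\| < 1/2$. On $V_\omega$ the coset $b_\omega a + \fJ_{\omega'}$ is therefore two-sided invertible in $\fA/\fJ_{\omega'}$, and $a$ admits a concrete local left inverse modulo $\fJ_{\omega'}$ obtained by composing $b_\omega$ with the geometric-series inverse of $b_\omega a + \fJ_{\omega'}$.

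The next step is compactness plus gluing. Selecting a finite subcover $V_{\omega_1}, \ldots, V_{\omega_N}$ of $M(\fZ)$, I would choose elements $z_1, \ldots, z_N \in \fZ$ whose Gelfand transforms form a partition of unity subordinate to this cover, meaning $\sum_j z_j = e$ and $z_j(\omega') = 0$ for $\omega' \notin V_{\omega_j}$. Setting $b := \sum_j z_j \widetilde b_{\omega_j}$, where $\widetilde b_{\omega_j}$ is a fixed element of $\fA$ providing the local left inverse on $V_{\omega_j}$, and applying centrality together with the scalar-reduction identity, one obtains $ba + \fJ_{\omega'} = \sum_j \omega'(z_j)(e + \fJ_{\omega'}) = e + \fJ_{\omega'}$ for every $\omega' \in M(\fZ)$. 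A quantitative control over the approximation then promotes this to $\|ba - e\|_\fA < 1$, so the geometric series produces a true inverse of $ba$ in $\fA$ and $(ba)^{-1} b$ is a genuine left inverse of $a$.

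The main obstacle is the partition-of-unity construction. The Gelfand representation $\fZ \to C(M(\fZ))$ is typically not surjective, so honest Urysohn bump functions in $\fZ$ need not exist and an exact subordinate partition of unity may be unavailable. The standard workaround, which I take to be the technical heart of the Böttcher–Silbermann proof, is to exploit that the localization uses only the finitely many $z_{\omega_j,i} \in \fZ$ arising from the approximating sums: restricting attention to the finitely generated commutative Banach subalgebra they generate allows approximate partitions of unity to be built via holomorphic functional calculus, and refining the approximation drives the total error $\|ba - e\|$ strictly below $1$. This quantitative route bypasses the need to know that $\bigcap_{\omega \in M(\fZ)} \fJ_\omega = \{0\}$, which is generally false and would otherwise be the obstruction to passing from ``invertible modulo every $\fJ_\omega$'' to genuine invertibility in $\fA$.
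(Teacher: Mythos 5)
Your attempt should first be set against the fact that the paper itself offers no proof of this theorem: it quotes it from B\"ottcher--Silbermann \cite[Theorem~1.35(a)]{BS06}, so the comparison is with that classical argument. Your forward direction and your localization steps are correct and standard: the scalar reduction $z+\fJ_\omega=\omega(z)(e+\fJ_\omega)$, the density in $\fJ_\omega$ of finite sums $\sum_i z_ic_i$ with $z_i\in\omega$ (which is where centrality enters), the neighborhood $V_\omega$ on which $\|b_\omega a-e+\fJ_{\omega'}\|<1/2$, and the compactness of $M(\fZ)$. The genuine gap is the gluing step, and your own diagnosis understates how irreparably it fails. Approximate partitions of unity subordinate to an arbitrary finite open cover simply do not exist in a general commutative unital Banach algebra: take $\fZ$ the disc algebra, so $M(\fZ)=\overline{\mathbb{D}}$ and the Gelfand transform is the identity, and cover $\overline{\mathbb{D}}$ by $V_1=\{|\zeta|<0.6\}$ and $V_2=\{|\zeta|>0.4\}$. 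If $|\hat z_1|<\eps$ off $V_1$, then $|\hat z_1|<\eps$ on the unit circle, hence by the maximum modulus principle $|\hat z_1|<\eps$ on all of $\overline{\mathbb{D}}$; together with $\hat z_1+\hat z_2=1$ this forces $|\hat z_2|\ge 1-\eps$ on $\{|\zeta|\le 0.4\}$, contradicting smallness off $V_2$. Holomorphic functional calculus in a finitely generated subalgebra yields Shilov idempotents for covers by disjoint clopen sets, but nothing resembling subordinate bump functions, so the ``standard workaround'' you invoke is not available and is not what \cite{BS06} does.

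Even granting the partition, the argument does not close. Your identity $ba+\fJ_{\omega'}=\sum_j\omega'(z_j)(e+\fJ_{\omega'})$ presumes a single fixed $\widetilde b_j\in\fA$ with $\widetilde b_j a-e\in\fJ_{\omega'}$ for \emph{every} $\omega'\in V_j$; the Neumann-series step only gives $\|b_ja-e+\fJ_{\omega'}\|<1/2$ for $\omega'\in V_j$, the genuine local inverses $(b_ja+\fJ_{\omega'})^{-1}b_j+\fJ_{\omega'}$ varying with $\omega'$ and admitting no common lift. Redoing the computation with $b_j$ itself (and noting that $\sum_j\hat z_j=1$ gives no control of $\sum_j|\hat z_j(\omega')|$ for complex-valued Gelfand transforms) leaves you at best with bounds on $\|ba-e+\fJ_{\omega'}\|$ for each $\omega'$ separately --- exactly the local invertibility you started from. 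The final promotion of $\sup_{\omega'}\|ba-e+\fJ_{\omega'}\|<1$ to $\|ba-e\|_{\fA}<1$ is the entire content of the theorem and is false as a general principle: $\bigcap_\omega\fJ_\omega$ may be nonzero, and $\sup_\omega\|\cdot+\fJ_\omega\|$ is in general a strictly weaker seminorm than the norm of $\fA$; no refinement of the approximations repairs this, since all of your estimates live in the quotients. The classical proof avoids gluing entirely: if $a$ is not left invertible, then $\fA a$ lies in a maximal left ideal $L$; $X:=\fA/L$ is a simple Banach $\fA$-module, each $z\in\fZ$ acts on $X$ as a module endomorphism by centrality, and Schur's lemma combined with Gelfand--Mazur forces $z$ to act as the scalar $\omega_0(z)$ for some character $\omega_0\in M(\fZ)$; consequently $\fJ_{\omega_0}\subseteq L$, and the assumed left invertibility of $a+\fJ_{\omega_0}$ yields $e\in L$, a contradiction. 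The right-sided case is symmetric and the two-sided case is the conjunction of the two.
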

The algebra $\fA/\fJ_\omega$ is referred to as the local
algebra of $\fA$ at $\omega\in M(\fZ)$.
\subsection{Algebras of singular integral operators
with shifts and algebras\\ of operators of local type}
Following \cite[Section~6.3]{KKL11a}, we consider the following sets:
\begin{align*}
\cZ
&:=
\alg_{\cB}\{I,S_0,cR_0,K\ :\ c\in SO(\mR_+),\ K\in\cK\},
\\
\Lambda
&:=
\{A\in\cB\ :\ AC-CA\in\cK {\rm \ \ for\ all\ \ } C\in\cZ\}.
\end{align*}
By \cite[Lemma~6.7(a)]{KKL11a}, the set $\Lambda$ is a closed unital subalgebra
of the algebra $\cB$, which is usually called the algebra of operators of
local type.

For $\alpha,\beta\in SOS(\mR_+)$, put
\[
\cF_{\alpha,\beta}
:=
\alg_{\cB} (\{S_0\}\cup\mathcal{FO}_{\alpha,\beta}).
\]

By a minor modification of the proof of \cite[Theorem~6.8]{KKL11a}
with the aid of Lemma~\ref{le:compactness-commutators}, we get the following.
\begin{theorem}\label{th:embeddings}
We have $\cK\subset\cZ\subset\cF_{\alpha,\beta}\subset\Lambda$.
\end{theorem}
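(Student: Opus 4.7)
The plan is to verify the three inclusions in turn. The first, $\cK\subset\cZ$, is immediate from the definition of $\cZ$, a closed subalgebra containing every compact operator among its generators. For $\cZ\subset\cF_{\alpha,\beta}$, I would check each generator of $\cZ$: the operators $I$ and $S_0$ lie trivially in $\cF_{\alpha,\beta}$; for $cR_0$ with $c\in SO(\mR_+)$, Theorem~\ref{th:algebra-A}(a),(d) gives $R_0\in\cA=\alg_{\cB}\{I,S_0\}\subset\cF_{\alpha,\beta}$, and $cI\in\mathcal{FO}_{\alpha,\beta}\subset\cF_{\alpha,\beta}$, so the product lies in $\cF_{\alpha,\beta}$. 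The delicate piece is $\cK\subset\cF_{\alpha,\beta}$: for $\phi,\psi\in C(\overline{\mR}_+)$ vanishing at $0$ and $\infty$, the operator $\phi R_0\psi\in\cF_{\alpha,\beta}$ is compact, because $r_0$ vanishes at $\pm\infty$ and localization by functions decaying at $0,\infty$ makes the corresponding Mellin convolution compact (cf.\ \cite[Section~4.2]{RSS11}); a density argument then recovers all of $\cK$.

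For $\cF_{\alpha,\beta}\subset\Lambda$, since $\Lambda$ is a closed subalgebra of $\cB$, it suffices to show that each generator $T\in\{S_0\}\cup\{U_\alpha^{\pm 1},U_\beta^{\pm 1},cI:c\in SO(\mR_+)\}$ of $\cF_{\alpha,\beta}$ commutes modulo $\cK$ with every generator of $\cZ$. The commutators $[T,I]$ and $[T,K]$ with $K\in\cK$ are trivially compact, and $[S_0,S_0]=0$. For $[T,S_0]$ with $T\in\mathcal{FO}_{\alpha,\beta}$, Lemma~\ref{le:compactness-commutators} applies directly. The main case is $[T,cR_0]$. When $T=S_0$, commutativity of $\cA$ gives $[S_0,cR_0]=[S_0,cI]R_0$, and $[S_0,cI]\in\cK$ by Lemma~\ref{le:compactness-commutators}. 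When $T\in\mathcal{FO}_{\alpha,\beta}$, I would use the intertwining $U_\alpha cI=(c\circ\alpha)U_\alpha$ and move $R_0$ across using Lemma~\ref{le:compactness-commutators} to reduce compactness of $[T,cR_0]$ to that of $R_0[T,cI]$; for $T=U_\alpha$ this is $R_0(c\circ\alpha-c)U_\alpha$, where $c\circ\alpha-c\in SO(\mR_+)$ vanishes at $0$ and $\infty$ by Lemma~\ref{le:composition}, so combined with the decay of $r_0$ at $\pm\infty$ the operator $R_0(c\circ\alpha-c)I$ is compact and remains so after multiplication by the bounded $U_\alpha$. The remaining generators $U_\beta^{\pm 1},U_\alpha^{-1}$ are handled in the same way.

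The principal obstacle lies in the compactness-producing steps: $\cK\subset\cF_{\alpha,\beta}$, and the compactness of $R_0[T,cI]$ appearing in $\cF_{\alpha,\beta}\subset\Lambda$. Both rest on the vanishing of the Mellin symbol $r_0$ at $\pm\infty$ combined with vanishing of a multiplication symbol at $0$ and $\infty$ (provided respectively by a cut-off and by $c\circ\alpha-c$). This is precisely where the "minor modification" mentioned in the paper, aided by Lemma~\ref{le:compactness-commutators}, does the real work; the rest of the argument is essentially organizational, following the pattern of \cite[Theorem~6.8]{KKL11a}.
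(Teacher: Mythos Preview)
Your proposal is correct and follows essentially the same approach as the paper, which simply cites \cite[Theorem~6.8]{KKL11a} and notes that the only modification needed is to invoke Lemma~\ref{le:compactness-commutators}; you have faithfully reconstructed that argument, including the key compactness steps ($\cK\subset\cF_{\alpha,\beta}$ via operators of the form $\phi R_0\psi$, and the compactness of $(c\circ\alpha-c)R_0$ via Lemma~\ref{le:composition} combined with the decay of $r_0$ at $\pm\infty$). One minor slip: your reduction actually yields $[T,cI]R_0$ rather than $R_0[T,cI]$, but as you note these agree modulo compacts by Lemma~\ref{le:compactness-commutators}, so the argument stands.
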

\subsection{Maximal ideal space of the algebra \boldmath{$\cZ^\pi$}}
It follows from Theorem~\ref{th:embeddings} that the quotient algebras
$\cZ^\pi:=\cZ/\cK$ and $\Lambda^\pi:=\Lambda/\cK$ are well defined.
Clearly, $\cZ^\pi$ lies in the center of $\Lambda^\pi$.
\begin{theorem}[{\cite[Theorem~6.11]{KKL11a}}]
\label{th:algebra-Zpi}
For the commutative Banach algebra $\cZ^\pi$ the following statements hold:
\begin{enumerate}
\item[{\rm (a)}]
the maximal ideal space $M(\cZ^\pi)$ of $\cZ^\pi$ is homeomorphic to
the set
\[
\{-\infty,+\infty\}\cup(\Delta\times\mR);
\]

\item[{\rm (b)}]
any coset $Z^\pi\in\cZ^\pi$ is of the form
\begin{equation}\label{eq:coset-of-Zpi}
Z^\pi=(c_+P_0^+)^\pi+(c_-P_0^-)^\pi+\lim_{n\to\infty}\sum_{k=1}^{m_n}(c_{n,k}H_{n,k})^\pi
\end{equation}
where $c_\pm\in\mC$, $c_{n,k}\in SO(\mR_+)$, $H_{n,k}\in\id_\cA\{R_0\}$, and $m_n\in\mN$;

\item[{\rm (c)}]
the Gelfand transform of the coset $Z^\pi\in\cZ^\pi$ defined by
\eqref{eq:coset-of-Zpi} is given for a point $(\xi,x)\in\Delta\times\mR$ by
\[
(Z^\pi)\widehat{\hspace{2mm}}(\xi,x)=c_+p_0^+(x)+c_-p_0^-(x)+
\lim_{n\to\infty}\sum_{k=1}^{m_n}c_{n,k}(\xi)(H_{n,k}^\pi)\widehat{\hspace{2mm}}(x),
\]
where $(H_{n,k}^\pi)\widehat{\hspace{2mm}}(x)$ is the Gelfand transform of
a coset $H_{n,k}^\pi\in\cA^\pi$, which is calculated in 
Corollary~$\ref{co:algebra-Api}({\rm b})$.
\end{enumerate}
\end{theorem}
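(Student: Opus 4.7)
The plan is to derive the three claims from: (i) the commutativity of $\cZ^\pi$; (ii) the structure of $\cA^\pi$ from Corollary~\ref{co:algebra-Api}; and (iii) a characterisation of characters of $\cZ^\pi$ via the ideal generated by $R_0^\pi$. The commutativity is the main technical input: $\cZ$ is generated by $I$, $S_0$, and the operators $cR_0$ with $c\in SO(\mR_+)$; since $\cA$ is commutative, $S_0 R_0 = R_0 S_0$ exactly, while $cI\in\mathcal{FO}_{\alpha,\beta}$ (taking $k=0$) combined with Lemma~\ref{le:compactness-commutators} yields $[cI,S_0]\in\cK$ and $[cI,R_0]\in\cK$ for every $c\in SO(\mR_+)$. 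Routine commutator bookkeeping in $\cB^\pi$ then shows that all generators of $\cZ^\pi$ pairwise commute, and commutativity extends to the norm-closure.

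\emph{Structure of cosets (part (b)).} Any polynomial in the generators of $\cZ^\pi$ can be rearranged, modulo commutators, into a polynomial in $(P_0^\pm)^\pi$ plus a sum of products each containing at least one factor $(cR_0)^\pi$ with $c\in SO(\mR_+)$. The idempotent relations $(P_0^\pm)^2=P_0^\pm$, $P_0^+P_0^-=0$, $P_0^++P_0^-=I$ reduce the $R_0$-free part to $c_+(P_0^+)^\pi+c_-(P_0^-)^\pi$ for scalars $c_\pm\in\mC$. The $R_0$-involving remainder lies in the closed ideal of $\cZ^\pi$ generated by $R_0^\pi$; by Theorem~\ref{th:algebra-A}(c) the corresponding $\cA$-ideal $\id_\cA\{R_0\}$ is precisely the set of cosets whose symbols vanish at $\pm\infty$, and a density argument then delivers \eqref{eq:coset-of-Zpi}.

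\emph{Characters (parts (a) and (c)).} Let $\chi\in M(\cZ^\pi)$. Its restriction to $\cA^\pi$ determines a point $x\in\overline{\mR}$. If $x\in\{-\infty,+\infty\}$, Theorem~\ref{th:algebra-A}(d) gives $r_0(x)=0$, so $\chi((cR_0)^\pi)=0$ for every $c\in SO(\mR_+)$, and evaluation on \eqref{eq:coset-of-Zpi} yields $\chi(Z^\pi)=c_\pm$. If $x\in\mR$, then $r_0(x)\neq 0$ and the prescription $\eta(c):=\chi((cR_0)^\pi)/\chi(R_0^\pi)$ defines a character of $SO(\mR_+)$, hence a point of $M(SO(\mR_+))=\Delta\cup\mR_+$. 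To exclude $\eta\in\mR_+$, I would use the fact that $cR_0$ is compact on $L^p(\mR_+)$ whenever $c\in C_c(\mR_+)$ -- this follows after Mellin transformation and passage to $L^p(\mR)$, where $cR_0$ corresponds to a product $\widetilde c\,W^0(r_0)$ with $\widetilde c\in C_c(\mR)$ and $r_0\in C_0(\mR)$, a standard compact operator. Consequently $\chi((cR_0)^\pi)=0$ for such $c$, whereas a character at an interior point $t_0\in\mR_+$ with $c(t_0)\neq 0$ would force $c(t_0)r_0(x)\neq 0$, a contradiction. Hence $\eta=\xi\in\Delta$, and $\chi$ is indexed by $(\xi,x)\in\Delta\times\mR$; substitution into \eqref{eq:coset-of-Zpi} yields the formula in (c). The main obstacles, as I see them, are the commutativity of $\cZ^\pi$ (which genuinely rests on Lemma~\ref{le:compactness-commutators}) and the compactness of $cR_0$ for $c\in C_c(\mR_+)$ -- the latter being the geometric mechanism that eliminates interior points of $\mR_+$ from $M(\cZ^\pi)$.
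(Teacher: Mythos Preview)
The paper does not prove this theorem; it is imported from \cite[Theorem~6.11]{KKL11a} without argument, so there is no in-paper proof to compare against.

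Your outline is broadly correct, but three points need repair. First, the relations $(P_0^\pm)^2=P_0^\pm$ and $P_0^+P_0^-=0$ are false as written: from $s_0^2=1+r_0^2$ one gets $S_0^2=I+R_0^2$, hence $(P_0^\pm)^2-P_0^\pm=\tfrac14 R_0^2$ and $P_0^+P_0^-=-\tfrac14 R_0^2$. These defects lie in $\id_\cA\{R_0\}$, so your reduction in part~(b) survives, but only after absorbing them into the $R_0$-ideal rather than using exact idempotency. Second, the implication ``$r_0(\pm\infty)=0\Rightarrow\chi((cR_0)^\pi)=0$'' is not automatic, because $cI\notin\cZ$ for general $c\in SO(\mR_+)$ and therefore $(cR_0)^\pi$ does not factor as a product $c^\pi R_0^\pi$ inside $\cZ^\pi$; an approximation argument is needed, for instance $R_0=\lim_n G_nR_0$ in $\cA$ with $G_n=\Phi^{-1}\Co(g_n)\Phi$, $g_n\in C_p(\overline{\mR})$, $g_n(\pm\infty)=0$, whence $(cR_0)^\pi=\lim_n G_n^\pi(cR_0)^\pi$ and $\chi(G_n^\pi)=g_n(\pm\infty)=0$ force $\chi((cR_0)^\pi)=0$. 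Third, you only produce the injection $M(\cZ^\pi)\hookrightarrow\{-\infty,+\infty\}\cup(\Delta\times\mR)$; the surjectivity (that every pair $(\xi,x)\in\Delta\times\mR$ actually arises from a character) and the continuity of the inverse map are not addressed and require a separate construction, e.g.\ via Lemma~\ref{le:SO-fundamental-property} to realise $\xi$ along a sequence $t_n\to s$ and then verifying that the formula in (c) is multiplicative and bounded on the dense set of finite sums.
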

\subsection{Semi-Fredholmness of operators of local type}
Let $\cJ_{+\infty}^\pi$, $\cJ_{-\infty}^\pi$ and $\cJ_{\xi,x}^\pi$ for
$(\xi,x)\in\Delta\times\mR$ be the closed two-sided ideals of the Banach
algebra $\Lambda^\pi$ generated, respectively, by the maximal ideals
\begin{align*}
\cI_{+\infty}^\pi
&:=
\id_{\cZ^\pi}
\big\{(P_0^-)^\pi,(gR_0)^\pi :\ g\in SO(\mR_+)\big\},
\\
\cI_{-\infty}^\pi
&:=
\id_{\cZ^\pi}
\big\{(P_0^+)^\pi,(gR_0)^\pi :\ g\in SO(\mR_+)\big\},
\\
\cI_{\xi,x}^\pi
&:=
\big\{Z^\pi\in\cZ^\pi:
(Z^\pi)\widehat{\hspace{2mm}}(\xi,x)=0\big\}
\end{align*}
of the algebra $\cZ^\pi$, and let
\[
\Lambda^\pi_{+\infty}:=\Lambda^\pi/\cJ^\pi_{+\infty},
\quad
\Lambda^\pi_{-\infty}:=\Lambda^\pi/\cJ^\pi_{-\infty},
\quad
\Lambda^\pi_{\xi,x}:=\Lambda^\pi/\cJ^\pi_{\xi,x}
\]
be the corresponding quotient algebras (see also \cite[Section~6.6]{KKL11a}).

Obviously, an operator $T\in\Lambda$ is left Fredholm
(resp., right Fredholm) on the space $L^p(\mR_+)$ if the
coset $T^\pi:=A+\cK$ is left invertible (resp., right
invertible) in the quotient Banach algebra $\Lambda^\pi\subset\cB^\pi$.
Applying now Theorem~\ref{th:Allan-Douglas} with $\fA=\Lambda^\pi$ and
$\fZ=\cZ^\pi$, we immediately obtain the following.
\begin{theorem}\label{th:localization-left-right}
An operator $T\in\Lambda$ is left (resp., right) Fredholm
on the space $L^p(\mR_+)$ if the following three conditions are fulfilled:
\begin{itemize}
\item[{\rm (i)}]
the coset $T^\pi+\cJ_{+\infty}^\pi$ is left (resp., right) invertible
in the quotient algebra ${\Lambda}_{+\infty}^\pi$;

\item[{\rm (ii)}]
the coset $T^\pi+\cJ_{-\infty}^\pi$ is left (resp., right) invertible
in the quotient algebra ${\Lambda}_{-\infty}^\pi$;

\item[{\rm (iii)}]
for every $(\xi,x)\in\Delta\times\mR$, the coset $T^\pi+\cJ_{\xi,x}^\pi$
is left (resp., right) invertible  in the quotient algebra
${\Lambda}_{\xi,x}^\pi$.
\end{itemize}
\end{theorem}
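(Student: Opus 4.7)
The plan is to apply the Allan-Douglas local principle (Theorem~\ref{th:Allan-Douglas}) with $\fA=\Lambda^\pi$ and $\fZ=\cZ^\pi$, and then to observe that one-sided invertibility in $\Lambda^\pi$ automatically transfers to the Calkin algebra $\cB^\pi$.

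First I would verify the hypotheses of Theorem~\ref{th:Allan-Douglas}. By Theorem~\ref{th:embeddings} we have $\cK\subset\cZ\subset\Lambda\subset\cB$, so the quotients $\cZ^\pi$ and $\Lambda^\pi$ are well defined Banach algebras and $\cZ^\pi$ is a closed unital subalgebra of $\Lambda^\pi$. The defining property of $\Lambda$, namely $AC-CA\in\cK$ for every $A\in\Lambda$ and $C\in\cZ$, reads $A^\pi C^\pi=C^\pi A^\pi$ modulo $\cK$, which exactly says that $\cZ^\pi$ is a central subalgebra of $\Lambda^\pi$.

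Next I would match the maximal ideal space $M(\cZ^\pi)$ with the ideals appearing in the statement. Theorem~\ref{th:algebra-Zpi}(a) identifies $M(\cZ^\pi)$ with the set $\{-\infty,+\infty\}\cup(\Delta\times\mR)$, and I would verify that the maximal ideals $\cI^\pi_{\pm\infty}$ and $\cI^\pi_{\xi,x}$ defined just before our theorem are precisely the kernels of the evaluation functionals at those points. This is done with the Gelfand transform formula of Theorem~\ref{th:algebra-Zpi}(c), together with the observations that $r_0(\pm\infty)=0$ (so every coset $(gR_0)^\pi$ vanishes at $\pm\infty$) and $p_0^\mp(\pm\infty)=0$ (so $(P_0^\mp)^\pi$ vanishes at $\pm\infty$), while at an interior point $(\xi,x)\in\Delta\times\mR$ the full kernel of the evaluation is $\cI^\pi_{\xi,x}$ by definition. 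Consequently, the ideals $\cJ^\pi_{\pm\infty}$ and $\cJ^\pi_{\xi,x}$ defined in the statement as the smallest closed two-sided ideals of $\Lambda^\pi$ containing $\cI^\pi_{\pm\infty}$ and $\cI^\pi_{\xi,x}$ respectively are precisely the localizing ideals $\fJ_\omega$ produced by Theorem~\ref{th:Allan-Douglas}.

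With the hypotheses and identifications in place, Theorem~\ref{th:Allan-Douglas} yields that $T^\pi$ is left (resp., right) invertible in $\Lambda^\pi$ provided that $T^\pi+\fJ_\omega$ is left (resp., right) invertible in $\Lambda^\pi/\fJ_\omega$ for every $\omega\in M(\cZ^\pi)$, which is exactly the content of hypotheses (i)--(iii). Since $\Lambda^\pi$ is a unital subalgebra of $\cB^\pi$, any one-sided inverse of $T^\pi$ found inside $\Lambda^\pi$ is also a one-sided inverse in $\cB^\pi$, so $T$ is left (resp., right) Fredholm on $L^p(\mR_+)$. There is no serious obstacle: the result is a direct, essentially notational, application of a cited localization principle; the only bookkeeping point that requires care is the identification of the three named ideals $\cJ^\pi_\omega$ with the ideals generated in $\Lambda^\pi$ by the maximal ideals of $\cZ^\pi$, and this is handled uniformly by the Gelfand-transform description in Theorem~\ref{th:algebra-Zpi}.
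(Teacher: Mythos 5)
Your proof is correct and takes essentially the same route as the paper: the paper obtains the theorem by applying the Allan-Douglas local principle (Theorem~\ref{th:Allan-Douglas}) with $\fA=\Lambda^\pi$ and $\fZ=\cZ^\pi$, after observing that left (resp., right) invertibility of $T^\pi$ in the subalgebra $\Lambda^\pi\subset\cB^\pi$ already gives left (resp., right) Fredholmness. The additional bookkeeping you carry out --- verifying centrality of $\cZ^\pi$ from the defining property of $\Lambda$ and identifying the ideals $\cJ^\pi_{\pm\infty}$ and $\cJ^\pi_{\xi,x}$ with the localizing ideals via the Gelfand-transform description in Theorem~\ref{th:algebra-Zpi} --- is exactly what the paper leaves implicit in its one-line derivation.
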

It follows from Theorems~\ref{th:algebra-A}(d) and \ref{th:embeddings} that
$N\in\cF_{\alpha,\beta}\subset\Lambda$. Thus,
Theorem~\ref{th:localization-left-right} is applicable to
$N$. Hence, our next aim is to study one-sided invertibility of the cosets
$N^\pi+\cJ^\pi_{+\infty}$, $N^\pi+\cJ^\pi_{-\infty}$ and
$N^\pi+\cJ^\pi_{\xi,x}$ in the corresponding local algebras
$\Lambda^\pi_{+\infty}$, $\Lambda^\pi_{-\infty}$ and $\Lambda^\pi_{\xi,x}$
for all $(\xi,x)\in\Delta\times\mR$.
\section{Mellin pseudodifferential operators and their symbols}
\label{sec:Mellin-PDO}
\subsection{Mellin PDO's: overview}
Mellin pseudodifferential operators are generalizations of Mellin convolution
operators. Let $\fa$ be a sufficiently smooth function defined on
$\mR_+\times\mR$. The Mellin pseudodifferential operator (shortly, Mellin PDO)
with symbol $\fa$ is initially defined for smooth functions $f$ of compact
support by the iterated integral
\begin{align}
[\Op(\fa) f](t)
&=
[\cM^{-1}\fa(t,\cdot)\cM f](t)
\nonumber\\
&=
\frac{1}{2\pi}\int_\mR dx \int_{\mR_+}
\fa(t,x)\left(\frac{t}{\tau}\right)^{ix}f(\tau) \frac{d\tau}{\tau}
\quad\mbox{for}\quad t\in\mR_+.
\label{eq:Mellin-PDO}
\end{align}
Obviously, if $\fa(t,x)=a(x)$ for all $(t,x)\in\mR_+\times\mR$, then the Mellin
pseudodifferential operator $\Op(\fa)$ becomes the Mellin convolution operator
\[
\Op(\fa)=\Co(a).
\]

In 1991 Rabinovich \cite{R92} (see also \cite{R98}) proposed to use Mellin
pseudodifferential operators with $C^\infty$ slowly oscillating symbols to
study singular integral operators with slowly oscillating coefficients
on $L^p$ spaces. Namely, he considered symbols
$\fa\in C^\infty(\mR_+\times\mR)$ such that
\begin{equation}\label{eq:Hoermander}
\sup_{(t,x)\in\mR_+\times\mR}
\big|(t\partial_t)^j\partial_x^k\fa(t,x)\big|(1+x^2)^{k/2}<\infty
\quad\mbox{for all}\quad j,k\in\mZ_+
\end{equation}
and
\begin{equation}\label{eq:Grushin}
\lim_{t\to s}\sup_{x\in\mR}
\big|(t\partial_t)^j\partial_x^k\fa(t,x)\big|(1+x^2)^{k/2}=0
\mbox{ for all }\ j\in\mN,\quad k\in\mZ_+,
\end{equation}
where $\mZ_+=\mN\cup\{0\}$ and $s\in\{0,\infty\}$. Here and in what follows 
$\partial_t$ and $\partial_x$ denote the operators of partial differentiation 
with respect to $t$ and to $x$. Notice that \eqref{eq:Hoermander} defines
nothing but the Mellin version of the H\"ormander class $S_{1,0}^0(\mR)$ 
(see, e.g., \cite[Chap.~2, Section~1]{K82} for the definition of the  
H\"ormander classes $S_{\varrho,\delta}^m(\mR^n)$). If $\fa$ satisfies 
\eqref{eq:Hoermander}, then the Mellin PDO $\Op(\fa)$ is bounded on the 
spaces $L^p(\mR_+,d\mu)$ for $1<p<\infty$ (see, e.g.,
\cite[Chap.~VI, Proposition~4]{St93} for the corresponding Fourier PDO's).
Condition \eqref{eq:Grushin} is the Mellin version of Grushin's definition of
symbols slowly varying in the first variable (see, e.g., \cite{G70},
\cite[Chap.~3, Definition~5.11]{K82}).

The idea of application of Mellin PDO's with considered class of symbols
was exploited in a series of papers by Rabinovich and coauthors (see
\cite[Sections~4.6--4.7]{RRS04} for the complete history up to 2004).
On the other hand, the smoothness conditions imposed on slowly oscillating
symbols are very strong. In particular, they are not applicable directly
to the problem we are dealing with in the present paper.

In 2005 the second author \cite{K06} developed a Fredholm theory for the
Fourier pseudodifferential operators with slowly oscillating symbols of
limited smoothness in the spirit of Sarason's definition \cite[p.~820]{S77}
of slow oscillation adopted in the present paper (much less
restrictive than in \cite{R98} and in the works mentioned in \cite{RRS04}).
Necessary for our purposes results from \cite{K06} were translated to
the Mellin setting, for instance, in \cite{KKL14} with the aid of the
transformation defined by \eqref{eq:def-E}--\eqref{eq:R-to-R+}. For the
convenience of readers, we reproduce the results required in what follows
exactly in the same form as they were stated in \cite{KKL14}, where more
details on their proofs can be found.
\subsection{Boundedness of Mellin PDO's}
If $a$ is an absolutely continuous function of finite total variation on $\mR$,
then its derivative belongs to $L^1(\mR)$ and
\[
V(a)=\int_\mR|a'(x)|\,dx
\]
(see, e.g., \cite[Chap. VIII, Sections 3 and 9; Chap. XI, Section~4]{N55}).
The set $V(\mR)$ of all absolutely continuous functions of finite total
variation on $\mR$ forms a Banach algebra when equipped with the norm
\begin{equation}\label{eq:V-norm}
\|a\|_V:=\|a\|_{L^\infty(\mR)}+V(a)=\|a\|_{L^\infty(\mR)}+\int_\mR|a'(x)|\,dx.
\end{equation}

Let $C_b(\mR_+,V(\mR))$ denote the Banach algebra of all bounded continuous
$V(\mR)$-valued functions on $\mR_+$ with the norm
\[
\|\fa(\cdot,\cdot)\|_{C_b(\mR_+,V(\mR))}
=
\sup_{t\in\mR_+}\|\fa(t,\cdot)\|_V.
\]
As usual, let $C_0^\infty(\mR_+)$ be the set of all infinitely differentiable
functions of compact support on $\mR_+$.
\begin{theorem}[{\cite[Theorem~3.1]{KKL14}}]
\label{th:boundedness-PDO}
If $\fa\in C_b(\mR_+,V(\mR))$, then the Mellin pseudodifferential operator
$\Op(\fa)$, defined for functions $f\in C_0^\infty(\mR_+)$ by the iterated
integral \eqref{eq:Mellin-PDO}, extends to a bounded linear operator on
the space $L^p(\mR_+,d\mu)$ and there is a number $C_p\in(0,\infty)$ depending
only on $p$ such that
\[
\|\Op(\fa)\|_{\cB(L^p(\mR_+,d\mu))}\le C_p\|\fa\|_{C_b(\mR_+,V(\mR))}.
\]
\end{theorem}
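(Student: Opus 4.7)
My plan is to reduce the claim to the analogous statement for Fourier pseudodifferential operators via the isometric isomorphism $E \colon L^p(\mR_+, d\mu) \to L^p(\mR)$ of \eqref{eq:def-E}. The substitution $\tau = e^z$ in \eqref{eq:Mellin-PDO} shows that on $C_0^\infty$
\[
E\, \Op(\fa)\, E^{-1} = \Op_\cF(\tilde\fa), \qquad [\Op_\cF(\fb) g](y) := \frac{1}{2\pi}\int_\mR dx \int_\mR \fb(y, x)\, e^{i(y-z)x}\, g(z)\, dz,
\]
where $\tilde\fa(y, x) := \fa(e^y, x)$ lies in $C_b(\mR, V(\mR))$ with the same norm as $\fa$. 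Since $E$ is an isometry, it suffices to show that whenever $\fb \in C_b(\mR, V(\mR))$, the operator $\Op_\cF(\fb)$ extends from $C_0^\infty(\mR)$ to a bounded operator on $L^p(\mR)$ of norm at most $C_p \|\fb\|_{C_b(\mR, V(\mR))}$.

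The key tool is Stechkin's inequality (Theorem~\ref{th:Stechkin}), which identifies each slice $\fb(y, \cdot) \in V(\mR)$ as an $L^p(\mR)$-multiplier of norm at most $\|S_\mR\|_{\cB(L^p(\mR))}\,\|\fb(y, \cdot)\|_V$. To turn this pointwise-in-$y$ control into an operator bound, I would exploit the absolute continuity of $\fb(y, \cdot)$ to write
\[
\fb(y, x) = \fb(y, -\infty) + \int_\mR (\partial_x \fb)(y, s)\, \chi_{[s, \infty)}(x)\, ds,
\]
and then, by substituting into the defining iterated integral and applying Fubini (justified for $g \in C_0^\infty(\mR)$), derive the representation
\[
[\Op_\cF(\fb) g](y) = \fb(y, -\infty)\, g(y) + \int_\mR (\partial_x \fb)(y, s)\, [W^0(\chi_{[s, \infty)})\, g](y)\, ds.
\]
Since each step function $\chi_{[s, \infty)}$ has $L^\infty$-norm and total variation equal to $1$, Stechkin yields the uniform operator bound $\|W^0(\chi_{[s, \infty)})\|_{\cB(L^p(\mR))} \le 2\,\|S_\mR\|_{\cB(L^p(\mR))}$ for every $s \in \mR$.

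The desired $L^p$-estimate would then follow from H\"older's inequality in the $s$-integral against the positive measure $d\nu_y(s) := |(\partial_x \fb)(y, s)|\, ds$, whose total mass is at most $\|\fb\|_{C_b(\mR, V(\mR))}$, combined with Fubini and the uniform Stechkin bound above. The main obstacle, and the point that requires real care, is that Minkowski's integral inequality cannot be used naively to move the $L^p(\mR)$-norm inside the $s$-integral: under the sole hypothesis $\fb \in C_b(\mR, V(\mR))$, the function $s \mapsto \|(\partial_x \fb)(\cdot, s)\|_{L^\infty(\mR)}$ need not lie in $L^1(\mR)$---its integral can strictly exceed $\sup_y V(\fb(y, \cdot))$. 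Circumventing this requires a genuinely two-variable argument respecting the joint structure of $\fb$, for instance an approximation of $\fb$ by symbols piecewise constant in $y$ on a dyadic partition combined with an almost-orthogonality (Cotlar--Stein) estimate to recombine the pieces, or a maximal-function bound of the form $\big\|\sup_{s \in \mR}|W^0(\chi_{[s, \infty)})\, g|\big\|_{L^p(\mR)} \le C_p' \|g\|_{L^p(\mR)}$ applied after a Fubini swap. Once the Fourier-PDO estimate is in hand, transferring back through $E^{-1}$ delivers the claimed inequality with a constant $C_p$ depending only on $p$.
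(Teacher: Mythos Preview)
The present paper does not prove this theorem at all: it is quoted from \cite[Theorem~3.1]{KKL14}, which in turn obtains the Mellin statement from the corresponding Fourier-PDO boundedness theorem of \cite{K06} via the isometry $E$ of \eqref{eq:def-E}--\eqref{eq:R-to-R+}. Your reduction $E\,\Op(\fa)\,E^{-1}=\Op_\cF(\tilde\fa)$ is therefore exactly the transfer step used in \cite{KKL14}; the substantive content of your proposal is the attempted direct proof of the Fourier-PDO bound, on which the paper under review is silent.

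That attempted proof has a genuine gap, which you correctly identify. Your diagnosis that Minkowski's inequality fails is right and the obstruction is real: for instance $\fb(y,x)=\arctan(x-y)$ lies in $C_b(\mR,V(\mR))$, yet $\int_\mR\sup_y|\partial_x\fb(y,s)|\,ds=\infty$. Of the two remedies you sketch, the maximal-function route does close the gap, but the inequality $\big\|\sup_{s\in\mR}|W^0(\chi_{[s,\infty)})g|\big\|_{L^p}\le C_p'\|g\|_{L^p}$ is nothing other than the Carleson--Hunt theorem for Fourier integrals (the one-sided maximal partial-integral operator is equivalent to the two-sided one via $\chi_{[-N,N]}=\chi_{[-N,\infty)}-\chi_{(N,\infty)}$). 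Invoking Carleson--Hunt here is enormously disproportionate to the result being proved, and the argument in \cite{K06} certainly does not rest on it. Your first remedy---approximating $\fb$ by $\sum_j\chi_{I_j}(y)\fb(y_j,\cdot)$ and appealing to Cotlar--Stein---does not work as stated: the pieces $\chi_{I_j}W^0(\fb(y_j,\cdot))$ map into disjoint regions of the target, so almost-orthogonality is trivial, but $W^0(\fb(y_j,\cdot))g$ is not localized in the source, and summing $\|W^0(\fb(y_j,\cdot))g\|_{L^p(I_j)}^p$ over infinitely many $j$ recovers exactly the difficulty you started with. For a self-contained argument that avoids Carleson--Hunt you should consult \cite{K06} directly.
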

\subsection{Products of Mellin PDO's}
Consider the Banach subalgebra $SO(\mR_+,V(\mR))$ of the algebra
$C_b(\mR_+,V(\mR))$ consisting of all $V(\mR)$-valued functions $\fa$ on
$\mR_+$ that slowly oscillate at $0$ and $\infty$, that is,
\[
\lim_{r\to s} \max_{t,\tau\in[r,2r]}\|\fa(t,\cdot)-\fa(\tau,\cdot)\|_{L^\infty(\mR)}=0,
\quad s\in\{0,\infty\}.
\]

Let $\cE(\mR_+,V(\mR))$ be the Banach algebra of all $V(\mR)$-valued functions
$\fa$ in the algebra $SO(\mR_+,V(\mR))$ such that
\[
\lim_{|h|\to 0}\sup_{t\in\mR_+}\big\|\fa(t,\cdot)-\fa^h(t,\cdot)\big\|_V=0
\]
where $\fa^h(t,x):=\fa(t,x+h)$ for all $(t,x)\in\mR_+\times\mR$.
\begin{theorem}[{\cite[Theorem~3.3]{KKL14}}]
\label{th:comp-semi-commutators-PDO}
If $\fa,\fb\in\cE(\mR_+,V(\mR))$, then
\[
\Op(\fa)\Op(\fb)\simeq \Op(\fa\fb).
\]
\end{theorem}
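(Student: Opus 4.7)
The plan is to transfer the problem to the Fourier side via the isomorphism $E$ defined in \eqref{eq:def-E}, approximate by smooth symbols, and isolate the leading contribution of the semicommutator as an operator with a symbol that vanishes at $0$ and $\infty$, hence compact. First, I would observe that under the conjugation $A \mapsto E A E^{-1}$ the Mellin PDO $\Op(\fa)$ on $L^p(\mR_+,d\mu)$ becomes a Fourier PDO on $L^p(\mR)$ with symbol $\widetilde{\fa}(y,x):=\fa(e^y,x)$, and the classes $\cE(\mR_+,V(\mR))$ and $SO(\mR_+,V(\mR))$ map to the corresponding translation-continuous and slowly oscillating (at $\pm\infty$) subalgebras of $C_b(\mR,V(\mR))$. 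It thus suffices to establish compactness of the Fourier semicommutator $\Op(\widetilde{\fa})\Op(\widetilde{\fb})-\Op(\widetilde{\fa}\widetilde{\fb})$ on $L^p(\mR)$.

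Next, I would use the $x$-translation continuity built into the definition of $\cE(\mR_+,V(\mR))$ together with a mollification of $\fa,\fb$ in the $t$-variable (respecting slow oscillation) to produce sequences $\fa_n,\fb_n$ of symbols that are smooth in $x$, smooth in $\log t$, and converge to $\fa,\fb$ in the norm $\|\cdot\|_{C_b(\mR_+,V(\mR))}$. Since the semicommutator map $(\fa,\fb)\mapsto \Op(\fa)\Op(\fb)-\Op(\fa\fb)$ is continuous in operator norm by the a priori bound of Theorem~\ref{th:boundedness-PDO}, and since $\cK$ is closed in $\cB$, it is enough to establish compactness for the smoothed symbols and then pass to the limit.

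For smooth symbols the classical oscillatory-integral calculus yields, after one integration by parts in $x$ in the Fourier composition formula, the representation
\[
\Op(\widetilde{\fa})\Op(\widetilde{\fb})-\Op(\widetilde{\fa}\widetilde{\fb}) = \Op(\fc_n) + K_n,
\]
where the principal symbol $\fc_n(y,x)$ involves the product $(\partial_y\widetilde{\fa}_n)(\partial_x\widetilde{\fb}_n)$, and $K_n$ is a remainder of the same form but with higher-order derivatives in $y$. The crucial point is that, because $\widetilde{\fa}_n$ is slowly oscillating in $y$, the derivative $\partial_y\widetilde{\fa}_n$ vanishes as $y\to\pm\infty$; consequently $\fc_n(y,x)\to 0$ as $|y|\to\infty$ uniformly in $x$ in the $V(\mR)$-norm. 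An operator $\Op(\fc)$ with a symbol vanishing at $\pm\infty$ in this sense is compact on $L^p(\mR)$: this is the Mellin/Fourier analog of the classical fact that $cI\cdot \Co(b)$ and $[cI,\Co(b)]$ are compact when $c\in C(\overline{\mR})$ vanishes at $\pm\infty$ and $b\in V(\mR)$ (cf.\ Lemma~\ref{le:compactness-commutators} and Theorem~\ref{th:algebra-A}(c)).

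The main obstacle is Step~3: the standard pseudodifferential calculus leans on Hörmander-type estimates such as \eqref{eq:Hoermander}--\eqref{eq:Grushin}, whereas here the $x$-regularity is only bounded variation and the $y$-regularity is only slow oscillation. To bridge this, I would either (i) keep the integration by parts in $x$ at the level of a Riemann--Stieltjes integral against $d_x\widetilde{\fb}_n(y,x)$, exploiting the $V(\mR)$-norm directly, or (ii) approximate the kernel $e^{i(y-z)x}$ using a dyadic partition of unity in $x$ and use the slow-oscillation $\|\widetilde{\fb}(y,\cdot)-\widetilde{\fb}(z,\cdot)\|_\infty\to 0$ as $(y,z)$ stays in a fixed compact set moved to infinity. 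Either route reduces matters to compactness statements whose limiting form is the vanishing of slowly oscillating symbols at the boundary, and then the approximation argument of Step~2 concludes the proof.
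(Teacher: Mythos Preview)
The paper does not prove this theorem; it is quoted from \cite[Theorem~3.3]{KKL14}, which itself transplants to the Mellin setting a result from \cite{K06}. So there is no in-paper argument to compare against, and your proposal must be weighed on its own merits.

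Your outline captures the correct heuristic---the semicommutator should be compact because the first-order correction in the composition calculus carries a $y$-derivative of a slowly oscillating symbol, which vanishes at $\pm\infty$---but Step~3 has a genuine gap that your own ``main obstacle'' paragraph does not close. After mollification the symbols are $C^\infty$, yet they are \emph{not} in any H\"ormander class: smoothing a $V(\mR)$-function in $x$ yields derivatives that lie in $L^1(\mR)$ but carry no pointwise decay of the type $(1+x^2)^{-k/2}$, so the ``classical oscillatory-integral calculus'' and the integration by parts in $x$ that you invoke are simply unavailable in the form you need. Even if one grants a first-order expansion, the remainder $K_n$ involves second $y$-derivatives of the mollified symbol, whose $C_b(\mR_+,V(\mR))$-norm blows up as the mollification scale shrinks; hence $K_n$ cannot be controlled uniformly in $n$ by Theorem~\ref{th:boundedness-PDO}, and the limiting argument collapses. (A minor slip: the leading correction term is $-i\,\partial_x\fa\cdot\partial_y\fb$, not $\partial_y\fa\cdot\partial_x\fb$; this is harmless here since $\fb$ is slowly oscillating too.) The proof in \cite{K06} avoids Taylor expansion altogether: it works directly with the limited-smoothness symbols via a dyadic frequency decomposition and a kernel estimate that uses only the $V(\mR)$-norm in $x$ together with the modulus of continuity $\sup_{|y-z|\le h}\|\fa(y,\cdot)-\fa(z,\cdot)\|_{L^\infty}$, whose vanishing at $\pm\infty$ is precisely the slow-oscillation hypothesis. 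Your route~(ii) points in this direction, but that kernel estimate is the entire content of the theorem and is absent from your sketch.
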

\begin{lemma}[{\cite[Lemma~3.4]{KKL14}}]
\label{le:PDO-3-operators}
If $\fa,\fb,\fc\in\cE(\mR_+,V(\mR))$ are such that $\fa$ depends only on the
first variable and $\fc$ depends only on the second variable, then
\[
\Op(\fa)\Op(\fb)\Op(\fc)
=
\Op(\fa\fb\fc).
\]
\end{lemma}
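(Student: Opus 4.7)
The plan is to exploit the fact that the hypotheses collapse two of the three factors to particularly simple operators and then combine three ingredients: the definition \eqref{eq:Mellin-PDO}, the boundedness statement of Theorem~\ref{th:boundedness-PDO}, and density of $C_0^\infty(\mR_+)$ in $L^p(\mR_+,d\mu)$.

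First I would observe that if a symbol depends only on the first variable, say $\fa(t,x)=\fa(t)$, then for $f\in C_0^\infty(\mR_+)$ the inner Mellin inversion in \eqref{eq:Mellin-PDO} recovers $f(t)$, so $\Op(\fa)$ acts as multiplication by $\fa(t)$. Dually, if $\fc(t,x)=\fc(x)$ depends only on the second variable, then $\Op(\fc)$ coincides with the Mellin convolution $\Co(\fc)=\cM^{-1}\fc\cM$; since $\fc\in V(\mR)$ is a Mellin multiplier, this is rigorous on $C_0^\infty(\mR_+)$ and extends by density.

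Next, for $f\in C_0^\infty(\mR_+)$, I would compute $\Op(\fb)\Op(\fc)f$ by first applying $\Op(\fc)=\Co(\fc)$, which on the Mellin side multiplies $\cM f$ by $\fc$, and then writing $\Op(\fb)$ in its Mellin-transform form
\[
[\Op(\fb)g](t)=\frac{1}{2\pi}\int_\mR \fb(t,x)(\cM g)(x)\,t^{ix}\,dx.
\]
Setting $g=\Op(\fc)f$ yields $\cM g(x)=\fc(x)\cM f(x)$, and thus
\[
[\Op(\fb)\Op(\fc)f](t)=\frac{1}{2\pi}\int_\mR \fb(t,x)\fc(x)(\cM f)(x)\,t^{ix}\,dx=[\Op(\fb\fc)f](t),
\]
the interchange being legitimate since $\cM f$ is a Schwartz function and $\fb(t,\cdot)\fc\in V(\mR)\subset L^\infty(\mR)$. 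Composing on the left with $\Op(\fa)$, which merely multiplies the preceding output by $\fa(t)$, then pulls $\fa(t)$ inside the integrand to produce the symbol $\fa(t)\fb(t,x)\fc(x)=(\fa\fb\fc)(t,x)$. Hence $\Op(\fa)\Op(\fb)\Op(\fc)f=\Op(\fa\fb\fc)f$ on the dense subspace $C_0^\infty(\mR_+)$.

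Finally, I would invoke Theorem~\ref{th:boundedness-PDO} applied to $\fa$, $\fb$, $\fc$, and the product $\fa\fb\fc$ (noting that $\cE(\mR_+,V(\mR))$ is a Banach algebra, so $\fa\fb\fc\in\cE(\mR_+,V(\mR))$) to extend the identity from $C_0^\infty(\mR_+)$ to all of $L^p(\mR_+,d\mu)$ by continuity. The main technical point to verify carefully is the Fubini-type interchange in the middle step; the main conceptual point is that, in contrast with Theorem~\ref{th:comp-semi-commutators-PDO} where a genuine semi-commutator must be estimated and shown to be compact, here the separation of variables ($\fa$ independent of $x$, $\fc$ independent of $t$) forces the composition formula to hold exactly, with no compact remainder.
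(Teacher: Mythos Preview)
The paper does not supply its own proof of this lemma; it simply quotes the result from \cite[Lemma~3.4]{KKL14}. Your direct argument is the natural one and is essentially correct: once one notes that $\Op(\fa)$ is multiplication by $\fa(t)$ and $\Op(\fc)=\Co(\fc)$, the identity reduces to the observation that the Mellin transform intertwines $\Co(\fc)$ with multiplication by $\fc$, so the symbol $\fc$ slides into the integrand for $\Op(\fb)$, and then $\fa(t)$ is pulled in on the left.

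The only point that deserves an extra line is the one you already flag: after applying $\Op(\fc)$ to $f\in C_0^\infty(\mR_+)$, the output $g=\Co(\fc)f$ need not lie in $C_0^\infty(\mR_+)$, so the integral representation \eqref{eq:Mellin-PDO} for $\Op(\fb)g$ is not literally the definition but requires justification. A clean fix is to verify the identity $\Op(\fb)\Op(\fc)=\Op(\fb\fc)$ on the dense subspace of $f$ whose Mellin transforms lie in $C_0^\infty(\mR)$ (on which all the integrals are absolutely convergent and Fubini is immediate), and then invoke Theorem~\ref{th:boundedness-PDO} for both $\fb$, $\fc$, and $\fb\fc$ to extend by continuity; the left multiplication by the bounded function $\fa$ causes no further difficulty. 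With that adjustment your proof is complete.
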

\subsection{Applications of Mellin pseudodifferential operators}
We immediately deduce the following assertion from \cite[Lemma~4.1]{K15}.
\begin{lemma}
\label{le:ff-fs-fr}
Suppose $f\in SO(\mR_+)$ and $\gamma\in\mC$ satisfies \eqref{eq:gamma-condition}.
Then the functions
\[
\ff(t,x):=f(t),
\quad
\fs_\gamma(t,x):=s_\gamma(x),
\quad
\fr_\gamma(t,x):=r_\gamma(x),
\quad
(t,x)\in\mR_+\times\mR,
\]
belong to the Banach algebra $\cE(\mR_+,V(\mR))$.
\end{lemma}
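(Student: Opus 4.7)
The lemma is a short verification of three membership conditions, and in fact is cited as an immediate consequence of Lemma~4.1 of \cite{K15}. Nevertheless, I outline how I would check the three properties directly, handling the $t$-dependent and $x$-dependent cases separately.

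My plan is to verify, for each of the three symbols, the defining conditions of $\cE(\mR_+, V(\mR))$: (i) boundedness as a $V(\mR)$-valued function on $\mR_+$, (ii) slow oscillation at $0$ and $\infty$ in the $L^\infty(\mR)$-norm, and (iii) uniform continuity of the translation action in the $V$-norm. First I would handle $\ff(t,x) = f(t)$, where $f \in SO(\mR_+)$. Since $\ff(t,\cdot)$ is constant in $x$, its total variation vanishes and $\|\ff(t,\cdot)\|_V = |f(t)|$, which is bounded because $f \in SO(\mR_+) \subset C_b(\mR_+)$. For slow oscillation, $\|\ff(t,\cdot) - \ff(\tau,\cdot)\|_{L^\infty(\mR)} = |f(t) - f(\tau)|$, and the required limit is precisely the defining property of $f \in SO(\mR_+)$. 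Finally, $\ff^h = \ff$ identically, so condition (iii) is trivial.

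Next I would handle $\fs_\gamma(t,x) = s_\gamma(x)$ and $\fr_\gamma(t,x) = r_\gamma(x)$, which are independent of $t$, making condition (ii) automatic. The key observation is that the condition $0 < 1/p + \Re\gamma < 1$ ensures $\sinh[\pi(x + i/p + i\gamma)]$ has no zero on the real line (its zeros occur where $1/p + \Re\gamma$ is an integer), so $s_\gamma$ and $r_\gamma$ are $C^\infty$ on $\mR$. Since $|\sinh[\pi(x + i/p + i\gamma)]|$ grows exponentially as $|x| \to \infty$, both $s_\gamma$ and $r_\gamma$ are bounded, and their derivatives $s_\gamma', r_\gamma'$ decay exponentially at $\pm\infty$; hence $s_\gamma', r_\gamma' \in L^1(\mR)$, so $s_\gamma, r_\gamma \in V(\mR)$ with $V(s_\gamma) = \|s_\gamma'\|_{L^1(\mR)}$ (and similarly for $r_\gamma$) by the identity recalled just before \eqref{eq:V-norm}. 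This gives condition (i).

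For condition (iii), using $V(s_\gamma - s_\gamma^h) = \int_\mR |s_\gamma'(x) - s_\gamma'(x+h)|\,dx$ together with $\|s_\gamma - s_\gamma^h\|_{L^\infty(\mR)} \le |h|\,\|s_\gamma'\|_{L^\infty(\mR)}$, the result follows from continuity of translation in $L^1(\mR)$ applied to $s_\gamma'$; the analogous estimate works for $r_\gamma$. No step is a genuine obstacle here: the only delicate point is exhibiting exponential decay of the derivatives, which is an explicit computation from the formulas in \eqref{eq:def-p-gamma-pm} and the definition of $r_\gamma$, and this decay is ensured precisely by \eqref{eq:gamma-condition}.
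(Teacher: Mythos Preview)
Your proposal is correct and matches the paper's treatment: the paper does not give a proof at all but simply deduces the lemma from \cite[Lemma~4.1]{K15}, and your outline is precisely the kind of direct verification that underlies that cited result. The only minor omission is that membership in $C_b(\mR_+,V(\mR))$ also requires continuity of $t\mapsto \fa(t,\cdot)$ in the $V$-norm, but this is immediate in both cases (for $\ff$ it follows from $f\in C_b(\mR_+)$ since $\|\ff(t,\cdot)-\ff(\tau,\cdot)\|_V=|f(t)-f(\tau)|$, and for $\fs_\gamma,\fr_\gamma$ the map is constant in $t$).
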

\begin{lemma}\label{le:PDO-in-Lambda}
If $\fb\in\cE(\mR_+,V(\mR))$, then the operator $\Phi^{-1}\Op(\fb)\Phi$
belongs to the algebra $\Lambda$.
\end{lemma}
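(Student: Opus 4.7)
The plan is to show that $A := \Phi^{-1}\Op(\fb)\Phi$ commutes modulo $\cK$ with every generator of $\cZ$, and then invoke algebraic closure to conclude $A\in\Lambda$. Indeed, the set $\{C\in\cB : [A,C]\in\cK\}$ is a closed subalgebra of $\cB$: it is clearly linear and norm-closed, and $[A,C_1C_2]=[A,C_1]C_2+C_1[A,C_2]$ together with the fact that $\cK$ is a closed two-sided ideal guarantees closure under products. It also contains $\cK$ trivially. Hence, it will suffice to verify the compactness of $[A,I]$, $[A,S_0]$, and $[A,cR_0]$ for every $c\in SO(\mR_+)$.

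The first commutator is zero. For $S_0$, Theorem~\ref{th:algebra-A}(d) gives $S_0=\Phi^{-1}\Op(\fs_0)\Phi$ where $\fs_0(t,x):=s_0(x)$; by Lemma~\ref{le:ff-fs-fr}, $\fs_0\in\cE(\mR_+,V(\mR))$. Since $\fb$ and $\fs_0$ commute pointwise, Theorem~\ref{th:comp-semi-commutators-PDO} yields
\[
\Op(\fb)\Op(\fs_0)\simeq\Op(\fb\fs_0)=\Op(\fs_0\fb)\simeq\Op(\fs_0)\Op(\fb),
\]
so $[\Op(\fb),\Op(\fs_0)]\in\cK$. Conjugation by the isometric isomorphism $\Phi$ preserves compactness, whence $[A,S_0]\in\cK$.

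For $cR_0$ with $c\in SO(\mR_+)$, put $\ff(t,x):=c(t)$ and $\fr_0(t,x):=r_0(x)$. Both lie in $\cE(\mR_+,V(\mR))$ by Lemma~\ref{le:ff-fs-fr}, and hence so does their product $\ff\fr_0$ since $\cE(\mR_+,V(\mR))$ is a Banach algebra. A direct inspection of the iterated integral \eqref{eq:Mellin-PDO} gives $\Op(\ff\fr_0)g=c\cdot\Op(\fr_0)g$ on $L^p(\mR_+,d\mu)$; combining this with Theorem~\ref{th:algebra-A}(d) and the fact that $\Phi$ (multiplication by $t^{1/p}$) commutes with multiplication by $c$, one obtains $cR_0=\Phi^{-1}\Op(\ff\fr_0)\Phi$. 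Since $\fb$ and $\ff\fr_0$ again commute pointwise, a second application of Theorem~\ref{th:comp-semi-commutators-PDO} yields $[A,cR_0]\in\cK$.

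Thus $\{I,S_0\}\cup\{cR_0:c\in SO(\mR_+)\}\cup\cK$ is contained in $\{C\in\cB:[A,C]\in\cK\}$, which being a closed subalgebra contains all of $\cZ=\alg_{\cB}\{I,S_0,cR_0,K:c\in SO(\mR_+),\,K\in\cK\}$. Therefore $A\in\Lambda$. The only mild subtlety is the exact identification $cR_0=\Phi^{-1}\Op(\ff\fr_0)\Phi$, which follows from a direct calculation once one notes that $\Op(\ff)$ coincides with multiplication by $c$; everything else is a routine application of Theorem~\ref{th:comp-semi-commutators-PDO}.
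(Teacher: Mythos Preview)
Your proof is correct and follows essentially the same approach as the paper's: identify $S_0$ and $cR_0$ as (conjugates of) Mellin PDO's with symbols in $\cE(\mR_+,V(\mR))$ and then apply Theorem~\ref{th:comp-semi-commutators-PDO} to conclude that the commutators with $\Op(\fb)$ are compact. The only differences are cosmetic: the paper cites Lemma~\ref{le:PDO-3-operators} for the identity $cR_0=\Phi^{-1}\Op(\ff\fr_0)\Phi$ where you do the direct calculation, and the paper explicitly invokes Theorem~\ref{th:boundedness-PDO} to ensure $\Phi^{-1}\Op(\fb)\Phi\in\cB$ before checking the commutator condition --- a point you leave implicit but which is needed for membership in $\Lambda\subset\cB$.
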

\begin{proof}
Let $c\in SO(\mR_+)$. It follows from Lemma~\ref{le:ff-fs-fr} that the
functions
\[
\fc_0(t,x):=c(t)r_0(x),
\quad
\fs_0(t,x):=s_0(x),
\quad
(t,x)\in\mR_+\times\mR,
\]
belong to the algebra $\cE(\mR_+,V(\mR))$. Since
$\cE(\mR_+,V(\mR))\subset C_b(\mR_+,V(\mR))$,
Theorem~\ref{th:boundedness-PDO} implies that $B:=\Phi^{-1}\Op(\fb)\Phi\in\cB$.
We infer from Theorem~\ref{th:comp-semi-commutators-PDO} that
\begin{align}
\Op(\fs_0)\Op(\fb)-\Op(\fb)\Op(\fs_0)\in\cK(L^p(\mR_+,d\mu)),
\label{eq:PDO-in-Lambda-1}
\\
\Op(\fc_0)\Op(\fb)-\Op(\fb)\Op(\fc_0)\in\cK(L^p(\mR_+,d\mu)).
\label{eq:PDO-in-Lambda-2}
\end{align}
On the other hand, by Theorem~\ref{th:algebra-A}(d) and
Lemma~\ref{le:PDO-3-operators},
\begin{align}
&
S_0=\Phi^{-1}\Co(s_0)\Phi=\Phi^{-1}\Op(\fs_0)\Phi,
\label{eq:PDO-in-Lambda-3}
\\
&
cR_0=c\Phi^{-1}\Co(r_0)\Phi=\Phi^{-1}\Op(\fc_0)\Phi.
\label{eq:PDO-in-Lambda-4}
\end{align}
Combining \eqref{eq:PDO-in-Lambda-1}--\eqref{eq:PDO-in-Lambda-4}, we
conclude that $S_0B-BS_0,(cR_0)B-B(cR_0)\in\cK$. Hence $B\in\Lambda$.
\end{proof}
Applying \cite[Lemma~4.4]{K15} and making minor modifications in the proof of
\cite[Lemma~4.5]{KKL14}, we get the following.
\begin{lemma}\label{le:shift-R-gamma}
Let $\gamma\in\mC$ satisfy \eqref{eq:gamma-condition}. Suppose
$\alpha\in SOS(\mR_+)$, $\omega$ is its exponent function, and $U_\alpha$ is
the associated isometric shift operator on $L^p(\mR_+)$. Then the operator
$U_\alpha R_\gamma$ can be realized as the Mellin pseudodifferential
operator up to a compact operator:
\[
U_\alpha R_\gamma \simeq \Phi^{-1}\Op (\fc) \Phi,
\]
where the function $\fc$, given by
\[
\fc(t,x):=e^{i\omega(t)x}r_\gamma(x)\ \ \mbox{ for }\ \
(t,x)\in\mR_+\times\mR,
\]
belongs to the Banach algebra $\cE(\mR_+,V(\mR))$.
\end{lemma}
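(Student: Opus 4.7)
The plan is to conjugate $U_\alpha R_\gamma$ by the isometric isomorphism $\Phi$ from Section~\ref{sec:algebra-A}, transferring the computation to $L^p(\mR_+,d\mu)$, and then to recognize the resulting integral operator as the Mellin pseudodifferential operator $\Op(\fc)$ up to a bounded slowly oscillating prefactor and a compact remainder.

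First, I would compute the conjugated shift $V_\alpha:=\Phi U_\alpha\Phi^{-1}$. Using $(\Phi f)(t)=t^{1/p}f(t)$ together with $\alpha(t)=te^{\omega(t)}$, which gives $\alpha'(t)=e^{\omega(t)}(1+t\omega'(t))$ and hence $t\alpha'(t)/\alpha(t)=1+t\omega'(t)$, a direct calculation yields
\[
(V_\alpha g)(t)=\bigl(1+t\omega'(t)\bigr)^{1/p}\,g(\alpha(t)).
\]
Combining this with $\Phi R_\gamma\Phi^{-1}=\Co(r_\gamma)$ (Theorem~\ref{th:algebra-A}(d)), substituting the Mellin integral representation of $\Co(r_\gamma)$, and applying the factorization $(\alpha(t)/\tau)^{ix}=e^{i\omega(t)x}(t/\tau)^{ix}$, I would recognize the kernel as that of
\[
\Phi U_\alpha R_\gamma\Phi^{-1}=\bigl(1+t\omega'(t)\bigr)^{1/p}\,\Op(\fc),\qquad \fc(t,x):=e^{i\omega(t)x}r_\gamma(x).
\]

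Next, I would verify that $\fc\in\cE(\mR_+,V(\mR))$. The uniform $V$-norm bound follows from the pointwise estimate $V(\fc(t,\cdot))\le|\omega(t)|\,\|r_\gamma\|_{L^1(\mR)}+\|r_\gamma'\|_{L^1(\mR)}$, using the exponential decay of $r_\gamma$ and $r_\gamma'$ together with the boundedness of $\omega$, which is a consequence of $\log\alpha'=\omega+\log(1+t\omega'(t))$ being bounded and $\inf(1+t\omega'(t))>0$. Slow oscillation of $\fc(t,\cdot)$ in $t$ at $0$ and $\infty$ follows from $\omega\in SO(\mR_+)$ via the estimate $\|\fc(t,\cdot)-\fc(\tau,\cdot)\|_{L^\infty(\mR)}\le|\omega(t)-\omega(\tau)|\sup_x|xr_\gamma(x)|$. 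Uniform continuity in the translation parameter, $\lim_{|h|\to 0}\sup_{t}\|\fc(t,\cdot)-\fc^h(t,\cdot)\|_V=0$, is inherited from the corresponding property of $r_\gamma\in V(\mR)$.

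The key final step, which is the \emph{minor modification} of \cite[Lemma~4.5]{KKL14} referred to in the excerpt, is to absorb the prefactor $\phi(t):=(1+t\omega'(t))^{1/p}$ modulo a compact operator. Since $\phi\in SO(\mR_+)$ is bounded and bounded away from zero, Lemma~\ref{le:ff-fs-fr} ensures that the symbol $\tilde\phi(t,x):=\phi(t)$ lies in $\cE(\mR_+,V(\mR))$, so Theorem~\ref{th:comp-semi-commutators-PDO} gives $\phi\,\Op(\fc)=\Op(\tilde\phi)\Op(\fc)\simeq\Op(\tilde\phi\fc)$. The main obstacle is to then show $\Op(\tilde\phi\fc)\simeq\Op(\fc)$, equivalently that the operator $(\phi(t)-1)\Op(\fc)$ is compact. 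This compactness is the delicate point of the proof; I would establish it by a localization-approximation argument within the Mellin pseudodifferential calculus, exploiting the exponential decay of $r_\gamma$ at $\pm\infty$ and following the line of \cite[Lemma~4.4]{K15}.
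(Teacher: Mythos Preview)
The paper does not supply an in-house proof of this lemma; it simply records that the result follows by applying \cite[Lemma~4.4]{K15} and making minor modifications in the proof of \cite[Lemma~4.5]{KKL14}. Your outline is the natural argument and almost certainly coincides with what those references do: conjugate by $\Phi$, obtain the exact identity $\Phi U_\alpha R_\gamma\Phi^{-1}=\phi\,\Op(\fc)$ with $\phi(t)=(1+t\omega'(t))^{1/p}$, verify $\fc\in\cE(\mR_+,V(\mR))$, and then remove the prefactor modulo a compact operator.

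There is, however, a genuine gap in your final step. You describe the compactness of $(\phi-1)\Op(\fc)$ as a ``delicate'' localization argument relying on the exponential decay of $r_\gamma$, but decay of $r_\gamma$ alone cannot do the job: if $\phi-1$ had a nonzero limit value at $0$ or $\infty$, the operator $(\phi-1)\Op(\fc)$ would have nontrivial limit operators there and could not be compact. The missing ingredient---which you do not state---is that $t\omega'(t)\to 0$ as $t\to 0$ and $t\to\infty$, so that $\phi-1$ vanishes at both endpoints. This follows from the slow oscillation of both $\omega$ and $\psi(t):=t\omega'(t)$: writing
\[
\omega(2r)-\omega(r)=\int_r^{2r}\psi(s)\,\frac{ds}{s}
=\psi(r)\log 2+\int_r^{2r}\bigl(\psi(s)-\psi(r)\bigr)\,\frac{ds}{s},
\]
the second integral is $o(1)$ as $r\to s\in\{0,\infty\}$ because $\psi\in SO(\mR_+)$, while the left-hand side is $o(1)$ because $\omega\in SO(\mR_+)$; hence $\psi(r)\to 0$. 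Once $\phi-1$ vanishes at the endpoints, the compactness is immediate: since $(\Op(\fc)g)(t)=(\Co(r_\gamma)g)(\alpha(t))$, one has $(\phi-1)\Op(\fc)=W\,\widetilde h\,\Co(r_\gamma)$ with $W$ the bounded bare shift $g\mapsto g\circ\alpha$ and $\widetilde h=(\phi-1)\circ\alpha^{-1}\in C(\overline{\mR}_+)$ vanishing at $0$ and $\infty$; a multiplication by a function vanishing at the endpoints composed with a Mellin convolution whose symbol $r_\gamma$ vanishes at $\pm\infty$ is compact on $L^p(\mR_+,d\mu)$. This compactness mechanism is precisely what the paper's citation of \cite[Lemma~4.4]{K15} is meant to supply.
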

\section{Sufficient conditions for the semi-Fredholmness}
\label{sec:proofs}
\subsection{One-sided invertibility in the quotient algebras
\boldmath{$\Lambda_{+\infty}^\pi$} and \boldmath{$\Lambda_{-\infty}^\pi$}}
The next lemma shows that the operator $N$ can be written as a paired operator
with respect to the pair $(P_0^+,P_0^-)$.
\begin{lemma}\label{le:N-representations}
Let $1<p<\infty$ and let $\gamma\in\mC$ satisfy \eqref{eq:gamma-condition}.
Suppose $a_k,b_k$ belong to $SO(\mR_+)$ for all $k\in\mZ$,
$\alpha,\beta$ belong to $SOS(\mR_+)$, the operators
$A_+\in W_{\alpha,p}^{SO}$ and $A_-\in W_{\beta,p}^{SO}$
are given by \eqref{eq:def-Apm}, and the operator $N$ is given by
\eqref{eq:def-N}. Then the operator $N$ can be represented in each of the
forms
\begin{equation}\label{eq:suf-i-1}
N=A_+P_0^++C_-P_0^-=C_+P_0^++A_-P_0^-,
\end{equation}
where
\begin{align*}
C_+ &:=A_++2\sinh(\pi i\gamma)e^{\pi i\gamma}(A_+-A_-)P_\gamma^-,
\\
C_-&:=A_-+2\sinh(\pi i\gamma)e^{-\pi i \gamma}(A_+-A_-)P_\gamma^+.
\end{align*}
Moreover, all operators $DP_0^+-P_0^+ D$ and $DP_0^--P_0^- D$, where
$D$ belongs to the set $\{A_+,A_-,C_+,C_-\}$, are compact on the
space $L^p(\mR_+)$.
\end{lemma}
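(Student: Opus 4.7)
My plan is to obtain the two representations directly from Lemma~\ref{le:PR-relations} by a short algebraic manipulation, and then deduce the compactness of the commutators from the commutativity of $\cA$ combined with Lemma~\ref{le:compactness-commutators}.

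Specialising Lemma~\ref{le:PR-relations} to $\delta=0$ gives $P_\gamma^+ = P_0^+ - E_\gamma$ and $P_\gamma^- = P_0^- + E_\gamma$, where $E_\gamma := \tfrac{1}{2}\sinh(\pi i\gamma)\,R_\gamma R_0$. Substituting into $N=A_+P_\gamma^++A_-P_\gamma^-$ produces the raw decomposition
\[
N \;=\; A_+P_0^+ + A_-P_0^- + (A_--A_+)E_\gamma,
\]
so the task reduces to rewriting $(A_--A_+)E_\gamma$ as $(C_+-A_+)P_0^+$ in one case and as $(C_--A_-)P_0^-$ in the other. For the first, I will invoke the second formula in Lemma~\ref{le:PR-relations}, $P_\gamma^-P_0^+ = -\tfrac{1}{4}e^{-i\pi\gamma}R_\gamma R_0$, to obtain
\[
E_\gamma \;=\; -2\sinh(\pi i\gamma)\,e^{i\pi\gamma}\,P_\gamma^-P_0^+,
\]
which, substituted back, yields $N=C_+P_0^++A_-P_0^-$ with $C_+$ exactly as in the statement. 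For the second representation, I will apply the same product formula with the roles of $\gamma$ and $0$ interchanged and then use commutativity of $\cA$ (so $R_\gamma R_0 = R_0 R_\gamma$ and $P_0^-P_\gamma^+ = P_\gamma^+P_0^-$) to derive the symmetric identity $E_\gamma = -2\sinh(\pi i\gamma)\,e^{-i\pi\gamma}\,P_\gamma^+P_0^-$, from which $N=A_+P_0^++C_-P_0^-$ drops out with the correct $C_-$. This symmetric identity is the only mildly delicate point in the whole derivation, since Lemma~\ref{le:PR-relations} does not state the product formula for $P_\gamma^+P_0^-$ explicitly.

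For the compactness assertion, observe that $A_+,A_-\in\mathcal{FO}_{\alpha,\beta}$ by construction, while $P_0^\pm$ and $P_\gamma^\pm$ belong to $\cA$ by Theorem~\ref{th:algebra-A}(d). Lemma~\ref{le:compactness-commutators} then immediately yields $A_\pm P_0^\pm - P_0^\pm A_\pm\in\cK$ and likewise with $P_0^\mp$. Since $C_\pm-A_\pm = 2\sinh(\pi i\gamma)\,e^{\mp i\pi\gamma}(A_+-A_-)\,P_\gamma^\mp$, the Leibniz identity for commutators gives
\[
[C_\pm,P_0^\pm] = [A_\pm,P_0^\pm] + 2\sinh(\pi i\gamma)\,e^{\mp i\pi\gamma}\bigl([A_+-A_-,\,P_0^\pm]\,P_\gamma^\mp + (A_+-A_-)\,[P_\gamma^\mp,\,P_0^\pm]\bigr),
\]
and analogously with $P_0^\mp$ in place of $P_0^\pm$. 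The first two commutators on the right are compact by Lemma~\ref{le:compactness-commutators}, and the ideal property of $\cK$ keeps their product with the bounded operator $P_\gamma^\mp$ in $\cK$, while the final commutator vanishes outright because $\cA$ is commutative. Beyond this, no step in the argument is substantive; it is all algebraic manipulation combined with the ideal property of $\cK$.
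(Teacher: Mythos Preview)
Your argument is correct and follows exactly the approach indicated (but not spelled out) in the paper: both representations are obtained from Lemma~\ref{le:PR-relations}, and the compactness of the commutators follows from Theorem~\ref{th:algebra-A}(a),(d) together with Lemma~\ref{le:compactness-commutators}. One harmless typo: in your combined formula $C_\pm-A_\pm=2\sinh(\pi i\gamma)\,e^{\mp i\pi\gamma}(A_+-A_-)P_\gamma^\mp$ the exponent should read $e^{\pm i\pi\gamma}$, matching the definitions of $C_\pm$; this does not affect the compactness argument.
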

\begin{proof}
Both representations follow from Lemma~\ref{le:PR-relations}.
The compactness of the commutators $DP_0^\pm-P_0^\pm D$ is a consequence of
parts (a) and (d) of Theorem~\ref{th:algebra-A} and
Lemma~\ref{le:compactness-commutators}.
\end{proof}
The following statement generalizes \cite[Theorem 8.1]{KKL11a}
from the case of two-sided invertible binomial functional operators
$A_+$ and $A_-$ to the case of one-sided invertible operators
$A_+\in W_{\alpha,p}^{SO}$ and $A_-\in W_{\beta,p}^{SO}$. This generalization
is possible thanks to Theorem~\ref{th:one-sided-inverse-closedness},
although the proof follows the same lines as in \cite{KKL11a}.
\begin{theorem}\label{th:suf-i}
Let $1<p<\infty$ and let $\gamma\in\mC$ satisfy \eqref{eq:gamma-condition}.
Suppose $a_k,b_k$ belong to $SO(\mR_+)$ for all $k\in\mZ$,
$\alpha,\beta$ belong to $SOS(\mR_+)$, the operators
$A_+\in W_{\alpha,p}^{SO}$ and $A_-\in W_{\beta,p}^{SO}$
are given by \eqref{eq:def-Apm}, and the operator $N$ is given by \eqref{eq:def-N}.
\begin{enumerate}
\item[{\rm (a)}]
If the operator $A_+$ is left (resp., right) invertible on the
space $L^p(\mR_+)$, then the coset $N^\pi+\cJ_{+\infty}^\pi$ is left (resp.,
right) invertible  in the quotient algebra $\Lambda_{+\infty}^\pi$.

\item[{\rm (b)}]
If the operator $A_-$ is left (resp., right) invertible on the
space $L^p(\mR_+)$, then the coset $N^\pi+\cJ_{-\infty}^\pi$ is left (resp.,
right) invertible  in the quotient algebra $\Lambda_{-\infty}^\pi$.
\end{enumerate}
\end{theorem}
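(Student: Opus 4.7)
My plan is to show that, modulo the local ideal at $\pm\infty$, the operator $N$ reduces to the functional operator $A_\pm$, and then to transfer the one-sided inverses of $A_\pm$ supplied by Theorem~\ref{th:one-sided-inverse-closedness} into $\Lambda_{\pm\infty}^\pi$.

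For part (a), I would invoke the paired representation $N=A_+P_0^++C_-P_0^-$ from Lemma~\ref{le:N-representations}. Since $(P_0^-)^\pi$ belongs to $\cI_{+\infty}^\pi$, and $\cJ_{+\infty}^\pi$ is the two-sided ideal of $\Lambda^\pi$ generated by $\cI_{+\infty}^\pi$, one has $(P_0^-)^\pi\in\cJ_{+\infty}^\pi$; the relation $P_0^++P_0^-=I$ then forces $(P_0^+)^\pi\equiv I^\pi$ modulo $\cJ_{+\infty}^\pi$. Using that $A_+$ and $C_-$ lie in $\Lambda$ (via $W_{\alpha,p}^{SO},W_{\beta,p}^{SO}\subset\mathcal{FO}_{\alpha,\beta}\subset\cF_{\alpha,\beta}\subset\Lambda$ from Theorem~\ref{th:embeddings}, together with $S_\gamma\in\cA\subset\cZ\subset\Lambda$ for the $P_\gamma^+$-part of $C_-$), this collapses to the coset identity
\[
N^\pi+\cJ_{+\infty}^\pi=A_+^\pi+\cJ_{+\infty}^\pi.
\]

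Now, if $A_+$ is left invertible on $L^p(\mR_+)$, Theorem~\ref{th:one-sided-inverse-closedness}(a) produces a left inverse $A_+^L\in W_{\alpha,p}^{SO}$, which by the same chain of inclusions lies in $\Lambda$. The identity $A_+^LA_+=I$ then gives $((A_+^L)^\pi+\cJ_{+\infty}^\pi)(N^\pi+\cJ_{+\infty}^\pi)=I^\pi+\cJ_{+\infty}^\pi$, so $N^\pi+\cJ_{+\infty}^\pi$ is left invertible in $\Lambda_{+\infty}^\pi$. The right-invertible case is symmetric via Theorem~\ref{th:one-sided-inverse-closedness}(b). Part (b) proceeds in complete parallel: starting from $N=C_+P_0^++A_-P_0^-$, observing $(P_0^+)^\pi\in\cJ_{-\infty}^\pi$, reducing $N^\pi\equiv A_-^\pi$ modulo $\cJ_{-\infty}^\pi$, and then applying Theorem~\ref{th:one-sided-inverse-closedness} to $A_-\in W_{\beta,p}^{SO}$.

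The main obstacle is not the quotient-algebra manipulation itself, which is essentially automatic once $N^\pi\equiv A_\pm^\pi$ modulo the appropriate ideal is established, but the structural issue of producing a one-sided inverse of $A_\pm$ inside a subalgebra small enough to be comparable with the central subalgebra $\cZ^\pi\subset\Lambda^\pi$; a generic one-sided inverse in $\cB$ need not be of local type, and its coset therefore cannot be pushed into $\Lambda_{\pm\infty}^\pi$ in a meaningful way. This structural obstruction is precisely what Theorem~\ref{th:one-sided-inverse-closedness} removes, and the resulting Wiener-algebra inverse is delivered to $\Lambda$ via the embedding guaranteed by Theorem~\ref{th:embeddings}.
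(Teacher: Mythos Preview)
Your proposal is correct and follows essentially the same route as the paper: both invoke Lemma~\ref{le:N-representations} for the paired representation, use $(P_0^\mp)^\pi\in\cI_{\pm\infty}^\pi\subset\cJ_{\pm\infty}^\pi$ to reduce $N^\pi+\cJ_{\pm\infty}^\pi$ to $A_\pm^\pi+\cJ_{\pm\infty}^\pi$, and then transport a one-sided inverse of $A_\pm$ into $\Lambda$ via Theorem~\ref{th:one-sided-inverse-closedness} together with the embedding $W_{\alpha,p}^{SO}\subset\cF_{\alpha,\beta}\subset\Lambda$ from Theorem~\ref{th:embeddings}. The only cosmetic difference is the order of the two steps; your emphasis on why the Wiener-algebra inverse is essential (a generic one-sided inverse need not be of local type) matches the paper's own remark that Theorem~\ref{th:one-sided-inverse-closedness} is what makes this generalization possible.
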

\begin{proof}
Recall that $\mathcal{FO}_{\alpha,\beta}\subset
\cF_{\alpha,\beta}\subset\Lambda$ in view of Theorem~\ref{th:embeddings}.
By Lemma~\ref{le:N-representations}, the operator $N$ is represented in
each of the forms \eqref{eq:suf-i-1}, where
\[
A_+\in W_{\alpha,p}^{SO}\subset\mathcal{FO}_{\alpha,\beta}
\subset\cF_{\alpha,\beta}\subset\Lambda,
\quad
A_-\in W_{\beta,p}^{SO}\subset\mathcal{FO}_{\alpha,\beta}
\subset\cF_{\alpha,\beta}\subset\Lambda,
\]
and
$C_+,C_-\in\cF_{\alpha,\beta}\subset\Lambda$.

(a) Take $A_+\in W_{\alpha,p}^{SO}$.
If $A_+$ is left (resp., right) invertible in $\cB$, then it
follows from Theorem~\ref{th:one-sided-inverse-closedness} that
there exists a left (resp., right) inverse $A_+^{(-1)}$ of $A_+$ such that
$A_+^{(-1)}\in W_{\alpha,p}^{SO}\subset
\mathcal{F}_{\alpha,\beta}\subset\Lambda$.
Hence the coset $A_+^\pi=A_++\cK$ is left (resp., right) invertible in the
quotient algebra $\Lambda^\pi$, which implies the left (resp., right)
invertibility of the coset $A_+^\pi+\cJ_{+\infty}^\pi$ in the quotient
algebra $\Lambda_{+\infty}^\pi$. Hence we infer from \eqref{eq:suf-i-1} that
\begin{align*}
N^\pi+\cJ_{+\infty}^\pi
&=
(A_+P_0^++C_-P_0^-)^\pi+
\cJ_{+\infty}^\pi
\\
&=
A_+^\pi+[(C_--A_+)P_0^-]^\pi+\cJ_{+\infty}^\pi
=
A_+^\pi+\cJ_{+\infty}^\pi
\end{align*}
because $(P_0^-)^\pi\in\cI_{+\infty}^\pi\subset\cJ_{+\infty}^\pi$.
Thus, the left (resp. right) invertibility of the operator $A_+$ in $\cB$
implies the left (resp., right) invertibility of the coset
$N^\pi+\cJ_{+\infty}^\pi$ in the quotient algebra $\Lambda^\pi_{+\infty}$.
Part (a) is proved.

(b) The proof is analogous.
\end{proof}
\subsection{Invertibility in the quotient algebras
\boldmath{$\Lambda_{\xi,x}^\pi$} with \boldmath{$(\xi,x)\in\Delta\times\mR$}}
By a literal repetition with minor modifications of the proof of
\cite[Lemma~7.4]{KKL11a}, we get the following.
\begin{lemma}\label{le:representations}
Let $\gamma\in\mC$ satisfy \eqref{eq:gamma-condition}. Suppose
$\alpha\in SOS(\mR_+)$ and $\omega$ is its exponent function. If
$\xi\in\Delta$ and
\[
\fa(t,x):=e^{i\omega(t)x}(r_\gamma(x))^2
\ \ \mbox{ for }\ \
(t,x)\in(\Delta\cup\mR_+)\times\mR,
\]
then there exists a function $\fb_\xi\in\cE(\mR_+,V(\mR))$ such that
\[
\fa(t,x)-\fa(\xi,x)=(\omega(t)-\omega(\xi))\fb_\xi(t,x)r_\gamma(x)
\ \ \mbox{ for }\ \
(t,x)\in\mR_+\times\mR.
\]
\end{lemma}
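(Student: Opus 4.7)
The plan is to extract $\fb_\xi$ by applying the elementary first-order identity
\[
e^{iux}-e^{ivx}=i(u-v)x\int_0^1 e^{i(v+s(u-v))x}\,ds
\]
with $u=\omega(t)$ and $v=\omega(\xi)$. Multiplying through by $(r_\gamma(x))^2$ and peeling off one copy of $r_\gamma(x)$ forces the definition
\[
\fb_\xi(t,x):=ix\,r_\gamma(x)\int_0^1 e^{i(\omega(\xi)+s(\omega(t)-\omega(\xi)))x}\,ds,
\]
after which the required factorization holds tautologically on $\mR_+\times\mR$. The extension of $\fa(\cdot,x)$ from $\mR_+$ to $\Delta$ via the Gelfand transform (licit since $t\mapsto e^{i\omega(t)x}\in SO(\mR_+)$ for every fixed $x$) is what makes $\fa(\xi,x)$ meaningful on the right-hand side.

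The substantive task is to verify that $\fb_\xi\in\cE(\mR_+,V(\mR))$. I would write $\fb_\xi=\int_0^1 \fg_s\,ds$ as a Bochner integral in $V(\mR)$, where
\[
\fg_s(t,x):=ix\,r_\gamma(x)\,e^{i(\omega(\xi)+s(\omega(t)-\omega(\xi)))x}.
\]
The $x$-only factor $x\mapsto ix\,r_\gamma(x)$ lies in $V(\mR)$ because $r_\gamma(x)=1/\sinh[\pi(x+i/p+i\gamma)]$ and its derivative decay exponentially as $|x|\to\infty$. Since $\omega\in SO(\mR_+)\subset C_b(\mR_+)$ is bounded and slowly oscillating at $0$ and $\infty$, the phase $c_s(t):=\omega(\xi)+s(\omega(t)-\omega(\xi))$ is uniformly bounded in $(s,t)\in[0,1]\times\mR_+$ and is slowly oscillating in $t$ uniformly in $s$. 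Using Lemma~\ref{le:ff-fs-fr} (which puts functions depending only on $x$ into $\cE(\mR_+,V(\mR))$) together with stability of $\cE(\mR_+,V(\mR))$ under products, I would show that each $\fg_s$ lies in $\cE(\mR_+,V(\mR))$ with norm bounded uniformly in $s$, and that $s\mapsto\fg_s$ is norm-continuous, so the Bochner integral indeed lies in this Banach algebra.

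The main obstacle will be verifying the uniform translation-continuity condition
\[
\lim_{|h|\to 0}\sup_{t\in\mR_+}\|\fb_\xi(t,\cdot)-\fb_\xi^h(t,\cdot)\|_V=0
\]
that defines membership in $\cE(\mR_+,V(\mR))$. By the triangle inequality this reduces to the analogous estimate for $\fg_s$ uniform in $s\in[0,1]$, which is controlled by the absolute integrability of $x r_\gamma(x)$ and $(x r_\gamma(x))'$ (exponential decay) together with the uniform bound on the modulating exponent $c_s(t)$ over the compact range of $\omega$. The companion property --- slow oscillation of $\fb_\xi(t,\cdot)$ in the $L^\infty$-norm as $t\to 0,\infty$ --- is milder and follows by dominated convergence in $s$ from the slow oscillation of $\omega$. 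This mirrors the template of \cite[Lemma~7.4]{KKL11a}, the only change being the bookkeeping for the extra factor of $r_\gamma(x)$ inside $\fa$.
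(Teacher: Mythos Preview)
Your approach is correct and follows exactly the template of \cite[Lemma~7.4]{KKL11a} that the paper invokes without further argument: the integral identity $e^{iux}-e^{ivx}=i(u-v)x\int_0^1 e^{i(v+s(u-v))x}\,ds$ is precisely the device used there, and the only modification needed here is the bookkeeping for the extra factor $r_\gamma$ in place of $r_0$, which you note. One small caveat: the appeal to Lemma~\ref{le:ff-fs-fr} ``together with stability of $\cE(\mR_+,V(\mR))$ under products'' does not work as stated, since the factor $e^{ic_s(t)x}$ by itself has infinite total variation and so is not an element of $V(\mR)$; instead, membership of $\fg_s$ in $\cE(\mR_+,V(\mR))$ must be verified directly (as in the analogue of Lemma~\ref{le:shift-R-gamma} with $r_\gamma$ replaced by $xr_\gamma$ and $\omega$ by the slowly oscillating phase $c_s$), which is exactly what your subsequent paragraph on translation-continuity and slow oscillation does.
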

\begin{lemma}\label{le:UR2}
Let $\gamma\in\mC$ satisfy \eqref{eq:gamma-condition}. Suppose $\alpha$ is a
slowly oscillating shift, $\omega$ is its exponent function, and $U_\alpha$ is
the associated isometric shift operator on $L^p(\mR_+)$. If
$(\xi,x)\in\Delta\times\mR$, then
\[
(U_\alpha R_\gamma^2)^\pi-e^{i\omega(\xi)x}(r_\gamma(x))^2 I^\pi
\in\cJ_{\xi,x}^\pi.
\]
\end{lemma}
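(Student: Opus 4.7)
The plan is to realize $U_\alpha R_\gamma^2$ modulo compacts as a Mellin pseudodifferential operator with symbol $\fa\in\cE(\mR_+,V(\mR))$, split $\fa=\fa(\xi,\cdot)+(\fa-\fa(\xi,\cdot))$, and handle each piece separately.  By Theorem~\ref{th:algebra-A}(d) and Lemma~\ref{le:shift-R-gamma}, one has $R_\gamma=\Phi^{-1}\Op(\fr_\gamma)\Phi$ and $U_\alpha R_\gamma\simeq\Phi^{-1}\Op(\fc)\Phi$ with $\fc(t,x)=e^{i\omega(t)x}r_\gamma(x)\in\cE(\mR_+,V(\mR))$; Theorem~\ref{th:comp-semi-commutators-PDO} then yields
\[
U_\alpha R_\gamma^2\simeq\Phi^{-1}\Op(\fa)\Phi,\qquad \fa(t,x):=e^{i\omega(t)x}(r_\gamma(x))^2.
\]

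The constant-in-$t$ piece $\Phi^{-1}\Op(\fa(\xi,\cdot))\Phi=\Phi^{-1}\Co(e^{i\omega(\xi)\cdot}(r_\gamma)^2)\Phi$ lies in $\cA$: its symbol is continuous on $\overline{\mR}$, vanishes at $\pm\infty$, and has finite total variation (thanks to the exponential decay of $r_\gamma$ and $r_\gamma'$), so it belongs to $C_p(\overline{\mR})$.  By Theorem~\ref{th:algebra-Zpi}(c) together with Corollary~\ref{co:algebra-Api}(b), the Gelfand transform of the corresponding coset in $\cZ^\pi$ at $(\xi,x)$ equals $e^{i\omega(\xi)x}(r_\gamma(x))^2$, so
\[
(\Phi^{-1}\Op(\fa(\xi,\cdot))\Phi)^\pi-e^{i\omega(\xi)x}(r_\gamma(x))^2 I^\pi\in\cI_{\xi,x}^\pi\subset\cJ_{\xi,x}^\pi.
\]

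For the remainder, Lemma~\ref{le:representations} factorises $\fa-\fa(\xi,\cdot)=(\omega-\omega(\xi))\fb_\xi\fr_\gamma$ with $\fb_\xi\in\cE(\mR_+,V(\mR))$; since $\omega-\omega(\xi)$ depends only on $t$ while $\fr_\gamma$ depends only on $x$, Lemma~\ref{le:PDO-3-operators} yields the exact factorisation
\[
\Phi^{-1}\Op(\fa-\fa(\xi,\cdot))\Phi=(\omega-\omega(\xi))I\cdot B\cdot R_\gamma,
\]
where $B:=\Phi^{-1}\Op(\fb_\xi)\Phi\in\Lambda$ by Lemma~\ref{le:PDO-in-Lambda}.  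Both $(\omega-\omega(\xi))I$ and $B$ are Mellin PDOs with symbols in the commutative algebra $\cE(\mR_+,V(\mR))$, so Theorem~\ref{th:comp-semi-commutators-PDO} makes them commute modulo $\cK$; the expression above is therefore $\simeq B\cdot(\omega-\omega(\xi))R_\gamma$.

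It remains to show $((\omega-\omega(\xi))R_\gamma)^\pi\in\cI_{\xi,x}^\pi$, and this is the main obstacle, since the multiplier $(\omega-\omega(\xi))I$ is not itself a generator of $\cZ$: one has to funnel the multiplication through $R_0$ using the ideal structure of $\cA$.  Because $r_\gamma(\pm\infty)=0$, Theorem~\ref{th:algebra-A}(c) places $R_\gamma$ in $\id_\cA\{R_0\}$, so (by commutativity of $\cA$) $R_\gamma$ is a norm-limit of finite sums $\sum_j d_j R_0$ with $d_j\in\cA$.  Lemma~\ref{le:compactness-commutators} gives $(\omega-\omega(\xi))d_j R_0\simeq d_j(\omega-\omega(\xi))R_0$, and the right-hand side lies in $\cZ$ as a product of $d_j\in\cA\subset\cZ$ and the generator $(\omega-\omega(\xi))R_0\in\cZ$.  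Passing to the limit places $(\omega-\omega(\xi))R_\gamma$ in $\cZ+\cK$, and Theorem~\ref{th:algebra-Zpi}(c) forces its Gelfand transform at $(\xi,x)$ to vanish because the slowly oscillating coefficient evaluates to $0$ at $\xi$.  Since $B^\pi\in\Lambda^\pi$ and $\cJ_{\xi,x}^\pi$ is a two-sided ideal of $\Lambda^\pi$ containing $\cI_{\xi,x}^\pi$, the coset $(B\cdot(\omega-\omega(\xi))R_\gamma)^\pi$ lies in $\cJ_{\xi,x}^\pi$, completing the proof.
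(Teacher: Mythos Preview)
Your proof is correct and follows essentially the same approach as the paper: realize $U_\alpha R_\gamma^2$ as a Mellin PDO modulo compacts, split off the $t$-dependence via Lemma~\ref{le:representations}, and show each piece lands in $\cJ_{\xi,x}^\pi$. Your handling of the constant-in-$t$ part $\fa(\xi,\cdot)$ as a single Mellin convolution in $\cA$ is slightly more direct than the paper's further three-term splitting in \eqref{eq:UR2-4}, and your explicit verification that $(\omega-\omega(\xi))R_\gamma$ lies in $\cZ$ (via $R_\gamma\in\id_\cA\{R_0\}$ and Lemma~\ref{le:compactness-commutators}) spells out a membership the paper takes for granted when it invokes Theorem~\ref{th:algebra-Zpi}(c).
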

\begin{proof}
The proof is developed by analogy with \cite[Lemma~8.3]{KKL11a}. In view of
Lemma~\ref{le:shift-R-gamma},
\begin{equation}\label{eq:UR2-1}
U_\alpha R_\gamma\simeq\Phi^{-1}\Op(\fc_{\omega,\gamma})\Phi,
\end{equation}
where $\fc_{\omega,\gamma}\in\cE(\mR_+,V(\mR))$ is given by
\[
\fc_{\omega,\gamma}(t,y):=e^{i\omega(t)y}r_\gamma(y)
\ \ \mbox{ for }\ \
(t,y)\in\mR_+\times\mR.
\]
On the other hand, in view of Theorem~\ref{th:algebra-A}(d) and Lemma~\ref{le:ff-fs-fr},
\begin{equation}\label{eq:UR2-2}
R_\gamma=\Phi^{-1}\Co(r_\gamma)\Phi=\Phi^{-1}\Op(\fr_\gamma)\Phi,
\end{equation}
where $\fr_\gamma\in\cE(\mR_+,V(\mR))$ is given by
\[
\fr_\gamma(t,y)=r_\gamma(y)
\ \ \mbox{ for }\ \
(t,y)\in\mR_+\times\mR.
\]
It follows from \eqref{eq:UR2-1}--\eqref{eq:UR2-2} and
Lemma~\ref{le:PDO-3-operators} that
\begin{equation}\label{eq:UR2-3}
U_\alpha R_\gamma^2 \simeq \Phi^{-1}\Op(\fa)\Phi,
\end{equation}
where
\[
\fa(t,y)=\fc_{\omega,\gamma}(t,y)\fr_\gamma(t,y)=e^{i\omega(t)y}(r_\gamma(y))^2
\ \ \mbox{ for }\ \
(t,y)\in\mR_+\times\mR.
\]
Since $\fa(\cdot,y)\in SO(\mR_+)$ for every $y\in\mR$, taking the Gelfand
transform of $\fa(\cdot,y)$, we obtain
\[
\fa(t,y)=e^{i\omega(t)y}(r_\gamma(y))^2
\ \ \mbox{ for }\ \
(t,y)\in(\Delta\cup\mR_+)\times\mR.
\]
Fix $(\xi,x)\in\Delta\times\mR$. Let us represent the function $\fa$ in the
form
\begin{align}
\fa(t,y)
=&
\fa(t,y)-\fa(\xi,y)+\fc_{\omega,\gamma}(\xi,y)r_\gamma(y)
\nonumber\\
=&
[\fa(t,y)-\fa(\xi,y)]+[\fc_{\omega,\gamma}(\xi,y)-\fc_{\omega,\gamma}(\xi,x)]\fr_\gamma(t,y)
\nonumber\\
&+
\fc_{\omega,\gamma}(\xi,x)[\fr_\gamma(t,y)-\fr_\gamma(t,x)]+\fa(\xi,x),
\label{eq:UR2-4}
\end{align}
where $(t,y)\in\mR_+\times\mR$. Further, we deduce from
Lemma~\ref{le:representations} that there exists a function
$\fb_\xi\in\cE(\mR_+,V(\mR))$ such that
\[
\fa-\fa(\xi,\cdot)=(\omega-\omega(\xi))\fb_\xi\fr_\gamma.
\]
Hence, we infer from the above equality and Lemmas~\ref{le:ff-fs-fr} and
\ref{le:PDO-3-operators} that
\begin{align*}
\Phi^{-1}\Op(\fa &-\fa(\xi,\cdot))\Phi
=
\Phi^{-1}\Op((\omega-\omega(\xi))\fb_\xi\fr_\gamma)\Phi
\nonumber\\
&=
(\Phi^{-1}\Op(\omega-\omega(\xi))\Phi)\cdot
(\Phi^{-1}\Op(\fb_\xi)\Phi)\cdot
(\Phi^{-1}\Op(\fr_\gamma)\Phi).
\end{align*}
The latter equality, Theorem~\ref{th:comp-semi-commutators-PDO} and equality
\eqref{eq:UR2-2} imply that
\begin{equation}\label{eq:UR2-5}
\Phi^{-1}\Op(\fa -\fa(\xi,\cdot))\Phi
\simeq(\omega-\omega(\xi))R_\gamma(\Phi^{-1}\Op(\fb_\xi)\Phi).
\end{equation}
Applying Theorem~\ref{th:algebra-Zpi}(c) and
Corollary~\ref{co:algebra-Api}(b), we obtain
\[
([(\omega-\omega(\xi))R_\gamma]^\pi)\widehat{\hspace{2mm}}(\xi,x)
=
(\omega(\xi)-\omega(\xi))r_\gamma(x)=0.
\]
Therefore $[(\omega-\omega(\xi))R_\gamma]^\pi\in\cI_{\xi,x}^\pi$.
On the other hand, since $\fb_\xi\in\cE(\mR_+,V(\mR))$, we conclude from
Lemma~\ref{le:PDO-in-Lambda} that $\Phi^{-1}\Op(\fb_\xi)\Phi\in\Lambda$.
Then, taking into account \eqref{eq:UR2-5} and the definition of the ideal
$\cJ_{\xi,x}^\pi$, we infer that
\begin{equation}\label{eq:UR2-6}
[\Phi^{-1}\Op(\fa-\fa(\xi,\cdot))\Phi]^\pi
=
[(\omega-\omega(\xi))R_\gamma(\Phi^{-1}\Op(\fb_\xi)\Phi)]^\pi
\in\cJ_{\xi,x}^\pi.
\end{equation}
Taking into account the definition of the norm \eqref{eq:V-norm} in the algebra
$V(\mR)$, it is easy to see that the function $\fc_{\omega,\gamma}(\xi,\cdot)$
belongs to $V(\mR)$, where
\[
\fc_{\omega,\gamma}(\xi,y)=e^{i\omega(\xi)y}r_\gamma(y)
\ \ \mbox{ for }\ \
y\in\mR_+.
\]
Then, by Stechkin's inequality (Theorem~\ref{th:Stechkin}),
$\fc_{\omega,\gamma}(\xi,\cdot)\in C_p(\overline{\mR})$. Hence, it follows
from Theorem~\ref{th:algebra-A}(a) that
$\Phi^{-1}\Co(\fc_{\omega,\gamma}(\xi,\cdot))\Phi\in\cA$. By
Theorem~\ref{th:algebra-Zpi}(c) and Corollary~\ref{co:algebra-Api}(b),
\begin{align*}
([\Phi^{-1}\Op(\fc_{\omega,\gamma}(\xi,\cdot)
&-
\fc_{\omega,\gamma}(\xi,x))\Phi]^\pi)\widehat{\hspace{2mm}}(\xi,x)=
\\
&=
([\Phi^{-1}\Co(\fc_{\omega,\gamma}(\xi,\cdot))\Phi]^\pi)
\widehat{\hspace{2mm}}(\xi,x)
-
([\fc_{\omega,\gamma}(\xi,x)I]^\pi)\widehat{\hspace{2mm}}(\xi,x)
\\
&=
e^{i\omega(\xi)x}r_\gamma(x)-e^{i\omega(\xi)x}r_\gamma(x)=0.
\end{align*}
Therefore
\[
[\Phi^{-1}\Op(\fc_{\omega,\gamma}(\xi,\cdot)
-\fc_{\omega,\gamma}(\xi,x))\Phi]^\pi\in\cI_{\xi,x}^\pi.
\]
By this observation, Lemma~\ref{le:PDO-3-operators} and equality
\eqref{eq:UR2-2}, we obtain
\begin{align}
[\Phi^{-1}\Op((\fc_{\omega,\gamma}(\xi,\cdot)
&-
\fc_{\omega,\gamma}(\xi,x))\fr_\gamma)\Phi]^{\pi}=
\nonumber\\
&=
[\Phi^{-1}\Op(\fc_{\omega,\gamma}(\xi,\cdot)
-\fc_{\omega,\gamma}(\xi,x))\Phi]^\pi
[\Phi^{-1}\Op(\fr_\gamma)\Phi]^\pi
\nonumber\\
&=
[\Phi^{-1}\Op(\fc_{\omega,\gamma}(\xi,\cdot)
-\fc_{\omega,\gamma}(\xi,x))\Phi]^\pi
R_\gamma^\pi\in\cJ_{\xi,x}^\pi.
\label{eq:UR2-7}
\end{align}
Finally, in view of \eqref{eq:UR2-2}, Theorem~\ref{th:algebra-Zpi}(c) and
Corollary~\ref{co:algebra-Api}(b), we deduce that
\[
([\Phi^{-1}\Op(\fr_\gamma-r_\gamma(x))\Phi]^\pi)\widehat{\hspace{2mm}}(\xi,x)
=
([R_\gamma-r_\gamma(x)I]^\pi)\widehat{\hspace{2mm}}(\xi,x)
=
r_\gamma(x)-r_\gamma(x)=0.
\]
Hence
\begin{equation}\label{eq:UR2-8}
\fc_{\omega,\gamma}(\xi,x)[\Phi^{-1}\Op(\fr_\gamma-r_\gamma(x))\Phi]^\pi
\in\cI_{\xi,x}^\pi\subset\cJ_{\xi,x}^\pi.
\end{equation}
Combining \eqref{eq:UR2-3}--\eqref{eq:UR2-4} with
\eqref{eq:UR2-6}--\eqref{eq:UR2-8}, we arrive at the relation
\[
(U_\alpha R_\gamma^2)^\pi-e^{i\omega(\xi)x}(r_\gamma(x))^2I^\pi
=
[\Phi^{-1}\Op(\fa)\Phi-\fa(\xi,x)I]^\pi\in\cJ_{\xi,x}^\pi,
\]
which completes the proof.
\end{proof}
Now we are in a position to prove that condition (ii)
of Theorem~\ref{th:one-sided-Fredholmness} is sufficient for the
invertibility of the coset $N^\pi+\cJ_{\xi,x}^\pi$ in the quotient algebra
$\Lambda_{\xi,x}^\pi$.
\begin{theorem}\label{th:suf-ii}
Let $1<p<\infty$ and let $\gamma\in\mC$ satisfy \eqref{eq:gamma-condition}.
Suppose $a_k,b_k$ belong to $SO(\mR_+)$ for all $k\in\mZ$, $\alpha,\beta$
belong to $SOS(\mR_+)$, the operators $A_+\in W_{\alpha,p}^{SO}$ and
$A_-\in W_{\beta,p}^{SO}$ are given by \eqref{eq:def-Apm}, and the operator
$N$ is given by \eqref{eq:def-N}. If $n(\xi,x)\ne 0$ for some
$(\xi,x)\in\Delta\times\mR$, where the function $n$ is defined by
\eqref{eq:def-apm}--\eqref{eq:def-n}, then the coset $N^\pi+\cJ_{\xi,x}^\pi$
is two-sided invertible in the quotient algebra $\Lambda_{\xi,x}^\pi$.
\end{theorem}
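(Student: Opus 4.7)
The plan is to prove that the coset $N^\pi + \cJ_{\xi,x}^\pi$ coincides with $n(\xi,x) I^\pi + \cJ_{\xi,x}^\pi$ in the local algebra $\Lambda_{\xi,x}^\pi$; since $n(\xi,x) \ne 0$ by hypothesis, this scalar coset is invertible, which immediately yields two-sided invertibility of $N^\pi + \cJ_{\xi,x}^\pi$. The Allan-Douglas framework (Theorem~\ref{th:Allan-Douglas}) ensures that the canonical image of $\cZ^\pi$ inside $\Lambda_{\xi,x}^\pi$ is a copy of $\mC \cdot I^\pi$ identified via the Gelfand transform at $(\xi,x)$, so for every $Z \in \cZ$ one has the congruence $Z^\pi \equiv (Z^\pi)\widehat{\hspace{2mm}}(\xi,x)\cdot I^\pi$ modulo $\cJ_{\xi,x}^\pi$.

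Equipped with this, I would compute the local symbols of the three elementary types of operators from which $N$ is built. First, $P_\gamma^\pm \in \cA$ by Theorem~\ref{th:algebra-A}(d), and its Gelfand transform equals $p_\gamma^\pm(x)$ by Corollary~\ref{co:algebra-Api}(b) combined with Theorem~\ref{th:algebra-Zpi}(c), so $(P_\gamma^\pm)^\pi \equiv p_\gamma^\pm(x) I^\pi$ modulo $\cJ_{\xi,x}^\pi$. Second, for $c \in SO(\mR_+)$, Theorem~\ref{th:algebra-Zpi}(c) gives $(cR_0)^\pi \equiv c(\xi) r_0(x) I^\pi$; factoring this as $(cI)^\pi(R_0)^\pi$ and using that $r_0(x) = 1/\sinh[\pi(x+i/p)] \ne 0$ for $x \in \mR$, I can divide by the central scalar $r_0(x)$ to obtain $(cI)^\pi \equiv c(\xi) I^\pi$. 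Third, by Lemma~\ref{le:SOS-iterations} each iterate $\alpha_k$ again lies in $SOS(\mR_+)$, and by Lemma~\ref{le:exponent-function-iterations} its exponent function at $\xi$ equals $k\omega(\xi)$; hence Lemma~\ref{le:UR2} applied to $\alpha_k$ in place of $\alpha$ gives $(U_\alpha^k R_\gamma^2)^\pi \equiv e^{ik\omega(\xi)x} (r_\gamma(x))^2 I^\pi$. Since condition \eqref{eq:gamma-condition} forces $r_\gamma(x) \ne 0$ for all $x \in \mR$, dividing once more by the central scalar $(r_\gamma(x))^2$ yields $(U_\alpha^k)^\pi \equiv e^{ik\omega(\xi)x} I^\pi$; the analogous argument with $\beta,\eta$ in place of $\alpha,\omega$ handles $U_\beta^k$.

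It then remains to assemble these congruences. For each fixed $k \in \mZ$, multiplicativity in $\Lambda_{\xi,x}^\pi$ yields $(a_k U_\alpha^k P_\gamma^+)^\pi \equiv a_k(\xi) e^{ik\omega(\xi)x} p_\gamma^+(x) I^\pi$ modulo $\cJ_{\xi,x}^\pi$, and similarly $(b_k U_\beta^k P_\gamma^-)^\pi \equiv b_k(\xi) e^{ik\eta(\xi)x} p_\gamma^-(x) I^\pi$. The Wiener bound \eqref{eq:Wiener-norm} guarantees absolute convergence of the series defining $A_\pm$ in $\cB$, which is preserved under right multiplication by $P_\gamma^\pm$; passing through the norm-continuous quotient map $\cB \to \Lambda_{\xi,x}^\pi$ and summing term by term therefore produces
\[
N^\pi \equiv a_+(\xi,x) p_\gamma^+(x) I^\pi + a_-(\xi,x) p_\gamma^-(x) I^\pi = n(\xi,x) I^\pi \pmod{\cJ_{\xi,x}^\pi},
\]
which is invertible because $n(\xi,x) \ne 0$.

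The step I expect to require the most care is the \emph{division by a central scalar} used to pass from $(cR_0)^\pi$ to $(cI)^\pi$ and from $(U_\alpha^k R_\gamma^2)^\pi$ to $(U_\alpha^k)^\pi$. This relies on the standard but nontrivial Allan-Douglas identification $\cJ_{\xi,x}^\pi \cap \cZ^\pi = \cI_{\xi,x}^\pi$, which turns non-vanishing of the Gelfand transform of a central element into its genuine invertibility inside the local quotient algebra; one must also take care that all the intermediate operators being manipulated in fact belong to $\Lambda$, so that their cosets make sense in $\Lambda_{\xi,x}^\pi$.
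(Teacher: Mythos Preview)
Your proposal is correct and arrives at the same identity
$N^\pi+\cJ_{\xi,x}^\pi=n(\xi,x)I^\pi+\cJ_{\xi,x}^\pi$ as the paper, using the
same key ingredient (Lemma~\ref{le:UR2} applied to the iterates $\alpha_k$
via Lemmas~\ref{le:SOS-iterations} and \ref{le:exponent-function-iterations}).
The only organizational difference is that the paper never isolates the
cosets $(cI)^\pi$ or $(U_\alpha^k)^\pi$ on their own; instead it introduces
the auxiliary operators $H_\pm:=p_\gamma^\pm(x)(r_\gamma(x))^{-2}R_\gamma^2$
and keeps the factor $R_\gamma^2$ attached throughout, so that every
intermediate object whose Gelfand transform is evaluated (e.g.\ $(a_k-a_k(\xi))H_+$)
lies in $\cZ$. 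Your route instead extracts $(cI)^\pi\equiv c(\xi)I^\pi$ and
$(U_\alpha^k)^\pi\equiv e^{ik\omega(\xi)x}I^\pi$ directly by the ``division''
argument, which is legitimate because once
$(R_0-r_0(x)I)^\pi,\,(R_\gamma^2-(r_\gamma(x))^2I)^\pi\in\cI_{\xi,x}^\pi$ and
the two-sided ideal property of $\cJ_{\xi,x}^\pi$ are used, one is left with a
\emph{nonzero scalar} multiple of an element of $\cJ_{\xi,x}^\pi$, and scalar
division is harmless. (In particular you do not actually need the equality
$\cJ_{\xi,x}^\pi\cap\cZ^\pi=\cI_{\xi,x}^\pi$ that you flag; the argument only
uses $\cI_{\xi,x}^\pi\subset\cJ_{\xi,x}^\pi$ and that $\cJ_{\xi,x}^\pi$ is a
closed two-sided ideal of $\Lambda^\pi$.) Your version is a touch more
transparent, the paper's a touch more self-contained about which algebra each
step lives in; substantively they are the same proof.
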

\begin{proof}
We follow the main lines of the proof of \cite[Theorem~8.4]{KKL11a}.

Fix $(\xi,x)\in\Delta\times\mR$ and consider the operators
\begin{equation}\label{eq:suf-ii-1}
H_\pm:=\frac{p_\gamma^\pm(x)}{[r_\gamma(x)]^2}R_\gamma^2.
\end{equation}
Then it follows from Theorem~\ref{th:algebra-Zpi}(c) and Corollary~\ref{co:algebra-Api}(b) that
\begin{equation}\label{eq:Hpm}
(H_\pm^\pi)\widehat{\hspace{2mm}}(\xi,x)=p_\gamma^\pm(x).
\end{equation}
Therefore, taking into account Corollary~\ref{co:algebra-Api}(b)
once again, we get
\[
(P_\gamma^\pm-H_\pm)^\pi\in\cI_{\xi,x}^\pi
\]
and
\[
\big[
A_+(P_\gamma^+-H_+)+A_-(P_\gamma^--H_-)
\big]^\pi\in\cJ_{\xi,x}^\pi,
\]
whence
\begin{equation}\label{eq:suf-ii-2}
N^\pi+\cJ_{\xi,x}^\pi=
(A_+H_++A_-H_-)^\pi
+\cJ_{\xi,x}^\pi.
\end{equation}
We know from Theorem~\ref{th:algebra-A}(d),(a) that $H_\pm\in\cA$.
Hence, we infer from Lemma~\ref{le:compactness-commutators} that
for all $k\in\mZ$,
\begin{equation}\label{eq:suf-ii-3}
(U_\alpha^k H_+)^\pi=(H_+U_\alpha^k)^\pi,
\quad
(U_\beta^k H_-)^\pi=(H_-U_\beta^k)^\pi.
\end{equation}
Taking into account \eqref{eq:Hpm}, it is easy to see that
for all $k\in\mZ$,
\[
\big[(a_kH_+)^\pi-(a_k(\xi)H_+)^\pi\big]\widehat{\hspace{2mm}}(\xi,x)=0,
\quad
\big[(b_kH_-)^\pi-(b_k(\xi)H_-)^\pi\big]\widehat{\hspace{2mm}}(\xi,x)=0.
\]
Hence
\begin{equation}\label{eq:suf-ii-4}
(a_kH_+)^\pi-(a_k(\xi)H_+)^\pi,\
(b_kH_-)^\pi-(b_k(\xi)H_-)^\pi
\in\cI_{\xi,x}^\pi\subset\cJ_{\xi,x}^\pi,
\end{equation}
Applying \eqref{eq:suf-ii-3} and \eqref{eq:suf-ii-4}, we obtain
for all $k\in\mZ$,
\begin{align}
(a_kU_\alpha^k H_+)^\pi-(a_k(\xi)U_\alpha^k H_+)^\pi
&=
[(a_kH_+-a_k(\xi)H_+)U_\alpha^k]^\pi\in\cJ_{\xi,x}^\pi,
\label{eq:suf-ii-5}
\\
(b_kU_\beta^k H_-)^\pi-(b_k(\xi)U_\beta^k H_-)^\pi
&=
[(b_kH_--b_k(\xi)H_-)U_\beta^k]^\pi\in\cJ_{\xi,x}^\pi.
\label{eq:suf-ii-6}
\end{align}
Then it follows from \eqref{eq:suf-ii-2} and
\eqref{eq:suf-ii-5}--\eqref{eq:suf-ii-6} that
\begin{equation}\label{eq:suf-ii-7}
N^\pi+\cJ_{\xi,x}^\pi=
\sum_{k\in\mZ}
\big(a_k(\xi)U_\alpha^k H_++b_k(\xi)U_\beta^k H_-\big)^\pi+\cJ_{\xi,x}^\pi.
\end{equation}
In view of Theorem~\ref{th:algebra-Zpi}(c) and
Corollary~\ref{co:algebra-Api}(b), it is easy to see that
\[
\big(\big[(r_\gamma(x))^{-2}R_\gamma^2-I\big]^\pi\big)\widehat{\hspace{2mm}}(\xi,x)=0.
\]
Hence
\begin{equation}\label{eq:suf-ii-8}
H_\pm^\pi-p_\gamma^\pm(x)I^\pi\in\cI_{\xi,x}^\pi\subset\cJ_{\xi,x}^\pi.
\end{equation}
By Lemmas~\ref{le:SOS-iterations}, \ref{le:exponent-function-iterations}, and
\ref{le:UR2}, we deduce for all $k\in\mZ$ that
\[
(U_\alpha^k R_\gamma^2)^\pi-e^{ik\omega(\xi)x}(r_\gamma(x))^2I^\pi,
\
(U_\beta^k R_\gamma^2)^\pi-e^{ik\eta(\xi)x}(r_\gamma(x))^2I^\pi\in\cJ_{\xi,x}^\pi.
\]
The above inclusions together with \eqref{eq:suf-ii-1} imply for every
$k\in\mZ$ that
\begin{align}
&
(U_\alpha^k H_+)^\pi-e^{ik\omega(\xi)x}p_\gamma^+(x)\,I^\pi\in\cJ_{\xi,x}^\pi,
\label{eq:suf-ii-9}
\\
&
(U_\beta^k H_-)^\pi-e^{ik\eta(\xi)x}p_\gamma^-(x)\,I^\pi\in\cJ_{\xi,x}^\pi.
\label{eq:suf-ii-10}
\end{align}
Combining \eqref{eq:suf-ii-8}--\eqref{eq:suf-ii-10}, we arrive at the equality
\[
N^\pi+\cJ_{\xi,x}^\pi=n(\xi,x)I^\pi+\cJ_{\xi,x}^\pi,
\]
where $n(\xi,x)$ is given by \eqref{eq:def-p-gamma-pm}--\eqref{eq:def-n}.
If $n(\xi,x)\ne 0$, then one
can check straightforwardly that $(1/n(\xi,x))I^\pi+\cJ_{\xi,x}^\pi$ is the
inverse of the coset $N^\pi+\cJ_{\xi,x}^\pi$ in the quotient algebra
$\Lambda_{\xi,x}^\pi$.
\end{proof}
\subsection{Proof of Theorem~{${\bf\ref{th:one-sided-Fredholmness}}$}}
The proof is analogous to that of \cite[Theorem~1.2]{KKL11a}. We know from
Theorem~\ref{th:algebra-A}(d) and Theorem~\ref{th:embeddings} that
$N\in\Lambda$. If condition (i) of Theorem~\ref{th:one-sided-Fredholmness} is
fulfilled, that is, if the operators $A_+$ and $A_-$
are left (resp., right) invertible, then by Theorem~\ref{th:suf-i}
the coset $N^\pi+\cJ_{+\infty}^\pi$ is left (resp., right) invertible
in the quotient algebra $\Lambda_{+\infty}^\pi$
and the coset $N^\pi+\cJ_{-\infty}^\pi$
is left (resp., right) invertible  in the quotient algebra
$\Lambda_{-\infty}^\pi$. On the other hand, if condition (ii) of
Theorem~\ref{th:one-sided-Fredholmness} holds, then in view of
Theorem~\ref{th:suf-ii}, the coset $N^\pi+\cJ_{\xi,x}^\pi$ is two-sided
invertible in the quotient algebra $\Lambda_{\xi,x}^\pi$ for every pair
$(\xi,x)\in\Delta\times\mR$. Then, by Theorem~\ref{th:localization-left-right},
the operator $N\in\Lambda$ is left (resp., right) Fredholm.
\qed
\section{Semi-Fredholmness of weighted singular integral operators with
coefficients being binomial functional operators}
\label{sec:binomial}
\subsection{Criteria for the two-sided and strict one-sided invertibility of
\boldmath{$aI-bU_\alpha$}}
Suppose $a,b\in SO(\mR_+)$ and $\alpha\in SOS(\mR_+)$. For $s\in\{0,\infty\}$, put
\[
L_*(s;a,b):=\liminf\limits_{t\to s}(|a(t)|-|b(t)|),
\quad
L^*(s;a,b):=\limsup\limits_{t\to s}(|a(t)|-|b(t)|).
\]
Fix a point $\tau\in\mR_+$ and put
\[
\tau_{-,\alpha}:=\lim_{n\to-\infty}\alpha_n(\tau),
\quad
\tau_{+,\alpha}:=\lim_{n\to+\infty}\alpha_n(\tau).
\]
Then either $\tau_{-,\alpha}=0$ and
$\tau_{+,\alpha}=\infty$, or $\tau_{-,\alpha}=\infty$ and $\tau_{+,\alpha}=0$.
The points $\tau_{+,\alpha}$ and $\tau_{-,\alpha}$
are called attracting and repelling points of
$\alpha$, respectively.

We say that the triple
$\{\alpha,a,b\}$ satisfies conditions
(I1), (I2), (LI), (RI) if
\begin{enumerate}
\item[(I1)]
$L_*(\tau_{-,\alpha};a,b)>0$ and
$L_*(\tau_{+,\alpha};a,b)>0$ and
$\inf\limits_{t\in\mR_+}|a(t)|>0$;

\item[(I2)]
$L^*(\tau_{-,\alpha};a,b)<0$ and
$L^*(\tau_{+,\alpha};a,b)<0$ and
$\inf\limits_{t\in\mR_+}|b(t)|>0$;

\item[(LI)]
$L^*(\tau_{-,\alpha};a,b)<0<L_*(\tau_{+,\alpha};a,b)$
and for every $t\in\mR_+$ there is an integer $k_t$ such that
$b[\alpha_k(t)]\ne 0$ for $k<k_t$ and $a[\alpha_k(t)]\ne 0$ for $k>k_t$.

\item[(RI)]
$L^*(\tau_{+,\alpha};a,b)<0<L_*(\tau_{-,\alpha};a,b)$
and for every $t\in\mR_+$ there is an integer $k_t$ such that
$b[\alpha_k(t)]\ne 0$ for $k\ge k_t$ and $a[\alpha_k(t)]\ne 0$ for $k<k_t$.
\end{enumerate}
\begin{theorem}[{\cite[Theorems~1.1--1.2]{KKL-MJOM}}]
\label{th:FO-binomial}
Let $a,b\in SO(\mR_+)$, $\alpha\in SOS(\mR_+)$, and let
the binomial functional operator $A$ be given by
\[
A:=aI-bU_\alpha.
\]
\begin{enumerate}
\item[(a)]
The operator $A$ is invertible on the Lebesgue space $L^p(\mR_+)$ if
and only if the triple $\{\alpha,a,b\}$  satisfies either condition {\rm(I1)},
or condition {\rm(I2)}.

\item[(b)]
The operator $A$ is strictly left invertible on the space $L^p(\mR_+)$
if and only if the triple $\{\alpha,a,b\}$ satisfies condition {\rm(LI)}.

\item[(c)]
The operator $A$ is strictly right invertible on the space $L^p(\mR_+)$
if and only if the triple $\{\alpha,a,b\}$ satisfies condition {\rm(RI)}.
\end{enumerate}
\end{theorem}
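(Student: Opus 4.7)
The plan is to reduce $A = aI - bU_\alpha$ to Neumann-series inversions of factored forms, exploiting that $U_\alpha$ is an isometry on $L^p(\mR_+)$ and that the orbits $\{\alpha_k(t)\}_{k\in\mZ}$ accumulate at $\tau_{+,\alpha}$ as $k\to+\infty$ and at $\tau_{-,\alpha}$ as $k\to-\infty$. The underlying identity I would use is
\[
(cU_\alpha)^k = \bigl[c\cdot(c\circ\alpha)\cdots(c\circ\alpha_{k-1})\bigr]\,U_\alpha^k,
\qquad k\in\mN,
\]
together with its analogue for $U_\alpha^{-1}$; combined with slow oscillation of $c$, the size of such products on the orbit of $t$ is controlled by the limits of $|c|$ at the fixed points, in the sense made precise by Lemma~\ref{le:SO-fundamental-property}.

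For the sufficiency of (I1) in part (a), I would factor $A = a(I - a^{-1}bU_\alpha)$ using $\inf|a|>0$. The hypothesis $L_*(\tau_{\pm,\alpha};a,b)>0$ plus slow oscillation yields $\limsup_{t\to\tau_{\pm,\alpha}}|a^{-1}b|(t)<1$, giving a uniform geometric bound on the iterated products for sufficiently large $k$; the forward Neumann series then converges in operator norm and inverts $I - a^{-1}bU_\alpha$. Sufficiency of (I2) follows symmetrically by factoring $A = -bU_\alpha\bigl(I - U_\alpha^{-1}(b^{-1}a)I\bigr)$ and using a backward Neumann series in $U_\alpha^{-1}$. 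For parts (b) and (c), (LI) (resp.\ (RI)) supplies an orbit-wise index $k_t$ separating a regime in which $b$ is nonvanishing and dominant near the repelling endpoint from a regime in which $a$ is nonvanishing and dominant near the attracting endpoint. I would construct a one-sided inverse as a \emph{hybrid} series gluing a forward expansion in $a^{-1}bU_\alpha$ (convergent near $\tau_{+,\alpha}$) to a backward expansion in $b^{-1}aU_\alpha^{-1}$ (convergent near $\tau_{-,\alpha}$), the gluing governed orbit-wise by $k_t$; a coefficient-by-coefficient check yields $A^LA = I$, while the transition at $k_t$ produces a nontrivial cokernel, so $A$ is strictly (not two-sidedly) left invertible.

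Necessity in all three parts I would argue by contradiction via Lemma~\ref{le:SO-fundamental-property}. If none of the conditions required for the assumed invertibility type holds, one extracts sequences $t_n\to\tau_{\pm,\alpha}$ along which the orbit products fail to decay in the direction demanded by that invertibility type; testing $A$ (or its putative one-sided inverse) against normalized $L^p$ functions sharply concentrated in orbit neighborhoods of the $t_n$ then exhibits an obstructing approximate kernel or cokernel and contradicts the assumed bound on the inverse. The hardest step I expect is the gluing construction in parts (b)--(c): proving the hybrid series defines a bounded operator on $L^p(\mR_+)$ despite $k_t$ depending discontinuously on $t$, and verifying that the defect at the transition index produces a \emph{strict} cokernel (respectively kernel) rather than an invertible operator. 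A related subsidiary difficulty is ruling out two-sided invertibility under (LI)/(RI) and ruling out the ``opposite'' one-sided invertibility when only one of them holds, which again relies on the abundance of slowly-oscillating cluster sequences furnished by Lemma~\ref{le:SO-fundamental-property}.
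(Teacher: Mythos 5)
First, be aware that the paper you were given contains no proof of Theorem~\ref{th:FO-binomial}: it is quoted verbatim, with attribution, from \cite[Theorems~1.1--1.2]{KKL-MJOM}, and is used in Section~\ref{sec:binomial} only as an input to Corollary~\ref{co:sufficiency-binomial}; so the only possible comparison is with the cited paper, not with anything internal. That said, your outline does match the strategy this result rests on: exploit that $U_\alpha$ is an isometry and that $U_\alpha cU_\alpha^{-1}=(c\circ\alpha)I$, so that $\|(cU_\alpha)^k\|$ is the supremum norm of the orbit product $c\cdot(c\circ\alpha)\cdots(c\circ\alpha_{k-1})$; prove sufficiency of (I1)/(I2) by forward/backward Neumann series; build one-sided inverses under (LI)/(RI) by splicing the two expansions at an orbit-dependent transition index; derive necessity from slowly oscillating cluster sequences as furnished by Lemma~\ref{le:SO-fundamental-property}.

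As a proof, however, the proposal has genuine gaps, concentrated exactly where you flag difficulty. (1) In part (a), the step from $\limsup_{t\to\tau_{\pm,\alpha}}|a^{-1}b|(t)<1$ to ``a uniform geometric bound on the iterated products'' is not automatic: condition (I1) allows $|b|>|a|$ on a compact part of $\mR_+$, so you must show that every orbit $\{\alpha_j(t)\}_{j\in\mZ}$ spends a number of steps in that compact part that is bounded uniformly in $t$ (this uses that $\alpha$ has no fixed points in $\mR_+$, hence $\inf_{t\in[m,M]}|\alpha(t)-t|>0$ on each segment $[m,M]$); only then does the spectral radius estimate $r(a^{-1}bU_\alpha)<1$ follow and the Neumann series converge. (2) In parts (b)--(c), the two hardest points --- boundedness of the glued series although $k_t$ depends discontinuously on $t$, and the exact identity $A^LA=I$ --- are named as difficulties rather than resolved; these are precisely the substance of the cited theorems, so what you have is a plan, not a proof. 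Note also that strictness is cheaper than your cokernel construction suggests: once left invertibility under (LI) is established, two-sided invertibility is already excluded by part (a), because the sign pattern in (LI) is incompatible with both (I1) and (I2). (3) Necessity is underestimated: the orbit-wise condition involving $k_t$ (nonvanishing of $b[\alpha_k(t)]$ and $a[\alpha_k(t)]$ on complementary half-orbits) is a pointwise algebraic condition that norm-based approximate kernel/cokernel arguments do not detect; establishing its necessity requires solving the equation $a(t)f(t)-b(t)(\alpha'(t))^{1/p}f(\alpha(t))=g(t)$ exactly along individual orbits and exhibiting non-solvability, an argument of a different nature from testing $A$ or its putative inverse against concentrated bump functions, which is all your sketch provides.
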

\subsection{Sufficient conditions for the semi-Fredholmness}
Combining Theorem~\ref{th:one-sided-Fredholmness} and
Theorem~\ref{th:FO-binomial}, we arrive at the following.
\begin{corollary}\label{co:sufficiency-binomial}
Let $1<p<\infty$ and let $\gamma\in\mC$ satisfy \eqref{eq:gamma-condition}.
Suppose $a,b,c,d$ belong to $SO(\mR_+)$, $\alpha,\beta$ belong to $SOS(\mR_+)$,
and $\omega,\eta\in SO(\mR_+)$ are the exponent functions of the shifts
$\alpha,\beta$, respectively. Consider the operator
\[
M:=(aI-bU_\alpha)P_\gamma^++(cI-dU_\beta)P_\gamma^-
\]
and the corresponding function $m$ defined for $(\xi,x)\in(\mR_+\cup\Delta)\times\mR$
by
\[
m(\xi,x):=
(a(\xi)-b(\xi)e^{i\omega(\xi)x})p_\gamma^+(x)
+
(c(\xi)-d(\xi)e^{i\eta(\xi)x})p_\gamma^-(x),
\]
where the functions $p_\gamma^\pm$ are defined by \eqref{eq:def-p-gamma-pm}.
\begin{enumerate}
\item[(a)]
If each of the triples $\{\alpha,a,b\}$ and $\{\beta,c,d\}$
satisfies either condition {\rm(I1)} or condition {\rm(I2)}
(but not necessarily the same condition), and
\begin{equation}\label{eq:m-non-degeneracy}
\inf_{x\in\mR}|m(\xi,x)|>0
\quad\mbox{for every}\quad
\xi\in\Delta,
\end{equation}
then the operator $M$ is Fredholm on the space $L^p(\mR_+)$.

\item[(b)]
If each of the triples $\{\alpha,a,b\}$ and $\{\beta,c,d\}$
satisfies only one  of conditions
{\rm(I1)}, {\rm(I2)} and {\rm(LI)} (but not necessarily the same condition),
and condition \eqref{eq:m-non-degeneracy} is
fulfilled, then the operator $M$ is left Fredholm on the space $L^p(\mR_+)$.

\item[(c)]
If each of the triples $\{\alpha,a,b\}$ and $\{\beta,c,d\}$
satisfies only one  of conditions
{\rm(I1)}, {\rm(I2)} and {\rm(RI)} (but not necessarily the same condition),
and condition \eqref{eq:m-non-degeneracy} is
fulfilled, then the operator $M$ is right Fredholm on the space $L^p(\mR_+)$.
\end{enumerate}
\end{corollary}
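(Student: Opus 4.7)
The plan is to recognize that Corollary \ref{co:sufficiency-binomial} is essentially a direct combination of Theorem \ref{th:one-sided-Fredholmness} with Theorem \ref{th:FO-binomial}, once we verify that the operator $M$ fits the framework of the main theorem. Specifically, set $A_+:=aI-bU_\alpha$ and $A_-:=cI-dU_\beta$. These operators correspond to the Wiener series \eqref{eq:def-Apm} with $a_0=a$, $a_1=-b$ and $b_0=c$, $b_1=-d$, all other coefficients vanishing, so trivially $A_+\in W_{\alpha,p}^{SO}$ and $A_-\in W_{\beta,p}^{SO}$, and $M$ is exactly the operator $N$ of \eqref{eq:def-N}. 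Then the associated function $n$ of \eqref{eq:def-apm}--\eqref{eq:def-n} collapses to $a_+(\xi,x)=a(\xi)-b(\xi)e^{i\omega(\xi)x}$ and $a_-(\xi,x)=c(\xi)-d(\xi)e^{i\eta(\xi)x}$, so $n(\xi,x)=m(\xi,x)$. Hence condition \eqref{eq:m-non-degeneracy} is precisely hypothesis (ii) of Theorem \ref{th:one-sided-Fredholmness}.

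It remains to translate the hypotheses on the triples $\{\alpha,a,b\}$ and $\{\beta,c,d\}$ into condition (i) of Theorem \ref{th:one-sided-Fredholmness} via Theorem \ref{th:FO-binomial}. For part (a), conditions (I1) and (I2) each imply that $A_+$ and $A_-$ are two-sided invertible on $L^p(\mR_+)$, in particular both left and right invertible. Applying Theorem \ref{th:one-sided-Fredholmness} once with the left-invertibility hypothesis and once with the right-invertibility hypothesis yields that $M$ is simultaneously left and right Fredholm, hence Fredholm.

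For part (b), each of (I1), (I2), (LI) implies left invertibility of the corresponding binomial operator: (I1) and (I2) yield two-sided invertibility (hence in particular left invertibility) by Theorem \ref{th:FO-binomial}(a), while (LI) yields (strict) left invertibility by Theorem \ref{th:FO-binomial}(b). Thus under the hypotheses of (b), both $A_+$ and $A_-$ are left invertible, and Theorem \ref{th:one-sided-Fredholmness} gives the left Fredholmness of $M$. Part (c) is proved in exactly the same way using Theorem \ref{th:FO-binomial}(c) in place of (b), so that (I1), (I2), or (RI) each imply right invertibility of $A_\pm$.

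There is no real obstacle here: the main theorem has been crafted to accept arbitrary one-sided invertible coefficients in the Wiener algebras, and Theorem \ref{th:FO-binomial} provides a complete dictionary between the invertibility properties of $aI-bU_\alpha$ and the explicit conditions (I1), (I2), (LI), (RI). The only point deserving care is to notice that the main theorem is stated symmetrically for the pair $(A_+,A_-)$ (both left, or both right), so in the mixed situations allowed by the corollary (e.g.\ one triple satisfying (I1) and the other (LI)), one must verify that the common one-sided invertibility still holds; this is immediate because (I1) and (I2) deliver two-sided invertibility, which subsumes both one-sided versions.
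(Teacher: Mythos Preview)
Your proposal is correct and follows exactly the approach the paper intends: the paper simply states that the corollary is obtained by ``Combining Theorem~\ref{th:one-sided-Fredholmness} and Theorem~\ref{th:FO-binomial}'', and your write-up spells out precisely that combination, including the identification of $M$ with $N$, of $m$ with $n$, and the translation of the conditions (I1), (I2), (LI), (RI) into the one-sided invertibility hypotheses required by Theorem~\ref{th:one-sided-Fredholmness}. Your remark that in part (a) one applies the main theorem twice (once for left, once for right Fredholmness) to conclude two-sided Fredholmness is the only detail worth making explicit, and you have done so.
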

Another (more involved) proof of Corollary~\ref{co:sufficiency-binomial}(a),
relying on criteria for the Fredholmness of Mellin pseudodifferential
operators (see \cite{K06} and \cite[Theorem~3.6]{KKL14}), is given in
\cite[Theorem~1.3]{KKL-JMAA}. The converse statement to 
Corollary~\ref{co:sufficiency-binomial}(a) is proved in 
\cite[Theorem~1.2]{KKL-JIEA}. The statements of parts (b) and (c) in
Corollary~\ref{co:sufficiency-binomial} are new.
\subsection*{Acknowledgment}
We would like to thank the anonymous referee for pointing out that
Lemma~\ref{le:Hilbert-one-sided} can be obtained from 
\cite[Example~2.16]{HRS01}.

\end{document}